\documentclass[a4paper,USenglish,cleveref, autoref, thm-restate]{compgeomtop}



\bibliographystyle{plainurl}

\title{A Theory of Spherical Diagrams}

\author{Giovanni Viglietta}{Department of Computer Science and Engineering, University of Aizu, Japan \and \url{http://giovanniviglietta.com/} }{viglietta@gmail.com}{https://orcid.org/0000-0001-6145-4602}{}

\authorrunning{G. Viglietta}

\Copyright{Giovanni Viglietta}

\keywords{Spherical Occlusion Diagram, polyhedron, visibility, Art Gallery problem, swirl graph}

\relatedversion{} 



\acknowledgements{The author is grateful to J.~O'Rourke, C.\,D.~T\'oth, J.~Urrutia, and M.~Yamashita for interesting discussions. The author also thanks the anonymous reviewers of CCCG~2022 and CGT for useful suggestions.}

\nolinenumbers 

\newcommand{\figscale}{0.57}

\Volume{42}
\ArticleNo{23}

\begin{document}

\maketitle

\begin{abstract}
We introduce an axiomatic theory of spherical diagrams as a tool to study certain combinatorial properties of polyhedra in $\mathbb R^3$, which are of central interest in the context of Art Gallery problems for polyhedra and other visibility-related problems in discrete and computational geometry.
\end{abstract}

\section{Introduction\label{sec:1}}
\paragraph*{Geometric intuition} Consider a set $\mathcal P$ of internally disjoint opaque polygons in $\mathbb R^3$ and a viewpoint $v\in \mathbb R^3$ such that no vertex of any polygon in $\mathcal P$ is visible to $v$. An example is given by the set of six rectangles in \cref{fig:intro} (left) with respect to the point $v$ located at the center of the arrangement.

Let $S$ be a sphere centered at $v$ that does not intersect any of the polygons in $\mathcal P$ (we may assume without loss of generality that $S$ is the unit sphere), and let $S_\mathcal P$ be the \emph{visibility map} of $\mathcal P$ with respect to $v$. That is, $S_\mathcal P$ is the set of radial projections onto $S$ of the portions of edges of polygons in $\mathcal P$ that are visible to $v$ (i.e., where polygons occlude projection rays). \cref{fig:intro} (right) shows an example of such a projection. The resulting structure $S_\mathcal P$ is a \emph{Spherical Occlusion Diagram}; this name indicates that no vertices of $\mathcal P$ appear in the visibility map, because they are all occluded by polygons.

In this paper we set out to formalize an axiomatic theory of Spherical Occlusion Diagrams and study their combinatorial structure.

\begin{figure}
  \centering
  \includegraphics[width=\linewidth]{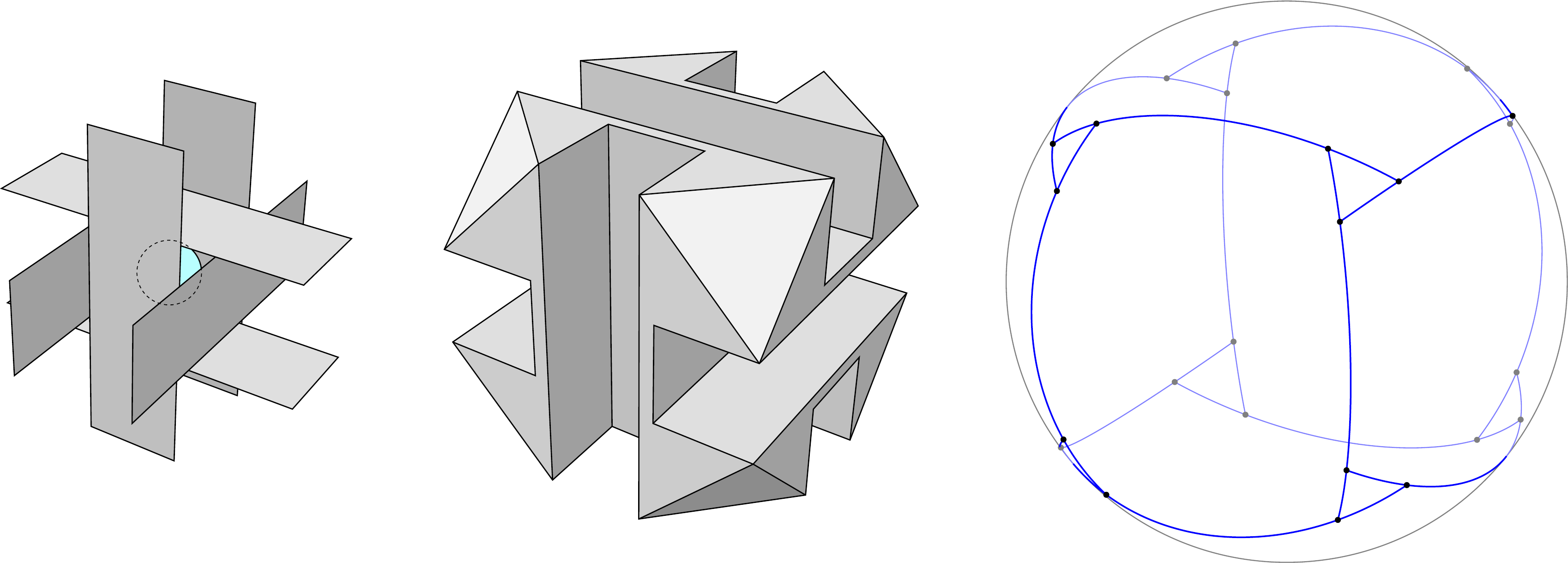}
  \caption{Construction of a Spherical Occlusion Diagram (right) as the visibility map of an arrangement of six rectangles (left) or the visibility map of a polyhedron with respect to the central point, which does not see any vertices (center).}
  \label{fig:intro}
\end{figure}

\paragraph*{Applications} Spherical Occlusion Diagrams naturally arise in visibility-related problems for arrangements of polygons in $\mathbb R^3$, and especially for polyhedra.

An example is found in~\cite{cano}, where an upper bound is given on the number of edge guards that solve the Art Gallery problem in a general polyhedron. That is, given a polyhedron  $\mathcal P$, the problem is to find a (small) set of edges that collectively see the whole interior of  $\mathcal P$. An edge $e$ sees a point $x$ if and only if there is a point $y\in e$ such that the open line segment $xy$ does not intersect the boundary of $\mathcal P$.\footnote{This definition of visibility is slightly more restrictive than the one most commonly adopted in the Art Gallery literature, which allows the segment $xy$ to graze the boundary of $\mathcal P$ without properly crossing it. However, for the purposes of this paper, the two definitions are essentially equivalent and lead to the same theory of Spherical Occlusion Diagrams. The choice of the less common definition is due to the fact that it prevents visibility maps of polyhedra from having multiple coincident arcs, as well as other pathological configurations. In turn, this allows for a slightly more elegant and concise formulation of \cref{p:1}, which relates visibility maps and Spherical Occlusion Diagrams.} (The reader can refer to~\cite{benbernou,reflex} for more results on this problem, as well as~\cite{art,orourke} for surveys on the Art Gallery problem in 2-dimensional settings.)

The idea of~\cite{cano} is to preliminarily select a (small) set $E$ of edges that cover all vertices of $\mathcal P$. Note that $E$ may be insufficient to guard the interior of $\mathcal P$, as some of its points may be invisible to all vertices; \cref{fig:intro} (center) shows an example. Thus, an additional (small) set of edges $E'$ is selected, which collectively see all internal points of $\mathcal P$ that do not see any vertices. Clearly, $E\cup E'$ is a set of edges that see all internal points of $\mathcal P$.

The selection of the set of edges $E'$ is carried out in~\cite{cano} by means of an ad-hoc analysis of some properties of points that do not see any vertices of $\mathcal P$. Spherical Occlusion Diagrams offer a systematic and general tool to reason about points in a polyhedron that do not see any vertices.

Spherical Occlusion Diagrams have also provided a framework for proving the main result of~\cite{tjc,arxiv}: Any point that sees no vertex of a polyhedron must see at least 8 of its edges, and the bound is tight.

\paragraph*{Summary} In \cref{sec:2} we give an axiomatic theory of Spherical Occlusion Diagrams and discuss their realizability as visibility maps. In \cref{sec:3} we prove some basic properties of Spherical Occlusion Diagrams, while in \cref{sec:4} we focus on an important pattern called \emph{swirl}. \cref{sec:5,sec:6} are devoted to two interesting classes of Spherical Occlusion Diagrams: the \emph{swirling} and the \emph{uniform} ones, respectively. \cref{sec:7} contains some concluding remarks and directions for future research.

A preliminary version of this paper appeared at~CCCG~2022~\cite{cccg}. In the present version, most sections have been reworked and all missing proofs have been included. In addition, \cref{p:07a,t:swirling1,t:swirling2} are new contributions.

\section{Axiomatic theory\label{sec:2}}
In the following, we will abstract from a specific set of polygons $\mathcal P$ and a viewpoint $v$, and we will focus on the salient properties of the Spherical Occlusion Diagrams constructed in \cref{sec:1} in order to devise a small set of axioms that describe all of them.

\paragraph*{Definitions}

Some terms will be useful. A \emph{great circle} on a sphere $S$ is a circle of maximum diameter within $S$. Equivalently, a great circle is the intersection between $S$ and a plane through its center. Two points on a sphere are \emph{antipodal} if the line through them contains the center of the sphere. A \emph{great semicircle} is an arc of a great circle whose endpoints are antipodal.

The \emph{geodesic arc} between two distinct non-antipodal points $x$ and $y$ on a sphere $S$ is the unique shortest path within $S$ having endpoints $x$ and $y$. Equivalently, it is the arc of the unique great circle through $x$ and $y$ that has $x$ and $y$ as endpoints and is strictly shorter than a great semicircle. Two geodesic arcs are \emph{collinear} if they lie on the same great circle.

\begin{definition}
Let $a$ and $b$ be two non-collinear geodesic arcs on a sphere. If an endpoint $p$ of $a$ lies in the relative interior of $b$, we say that $a$ \emph{hits} $b$ at $p$ (or \emph{feeds into} $b$ at $p$) and $b$ \emph{blocks} $a$ at $p$.
\end{definition}

\paragraph*{Axioms}
We are now ready to formulate an abstract theory of Spherical Occlusion Diagrams.

\begin{definition}\label{d:1}
A \emph{Spherical Occlusion Diagram (SOD)} is a finite non-empty collection $\mathcal D$ of geodesic arcs on the unit sphere in $\mathbb R^3$ satisfying the following axioms.
\begin{enumerate}
\item[A1.] Any two arcs in $\mathcal D$ are internally disjoint.
\item[A2.] Each arc in $\mathcal D$ is blocked by arcs of $\mathcal D$ at each endpoint.
\item[A3.] All arcs in $\mathcal D$ that hit the same arc of $\mathcal D$ reach it from the same side.
\end{enumerate}
\end{definition}

As an example, \cref{fig:example} shows an SOD with 18 arcs.

\begin{figure}[ht]
  \centering
  \includegraphics[scale=\figscale]{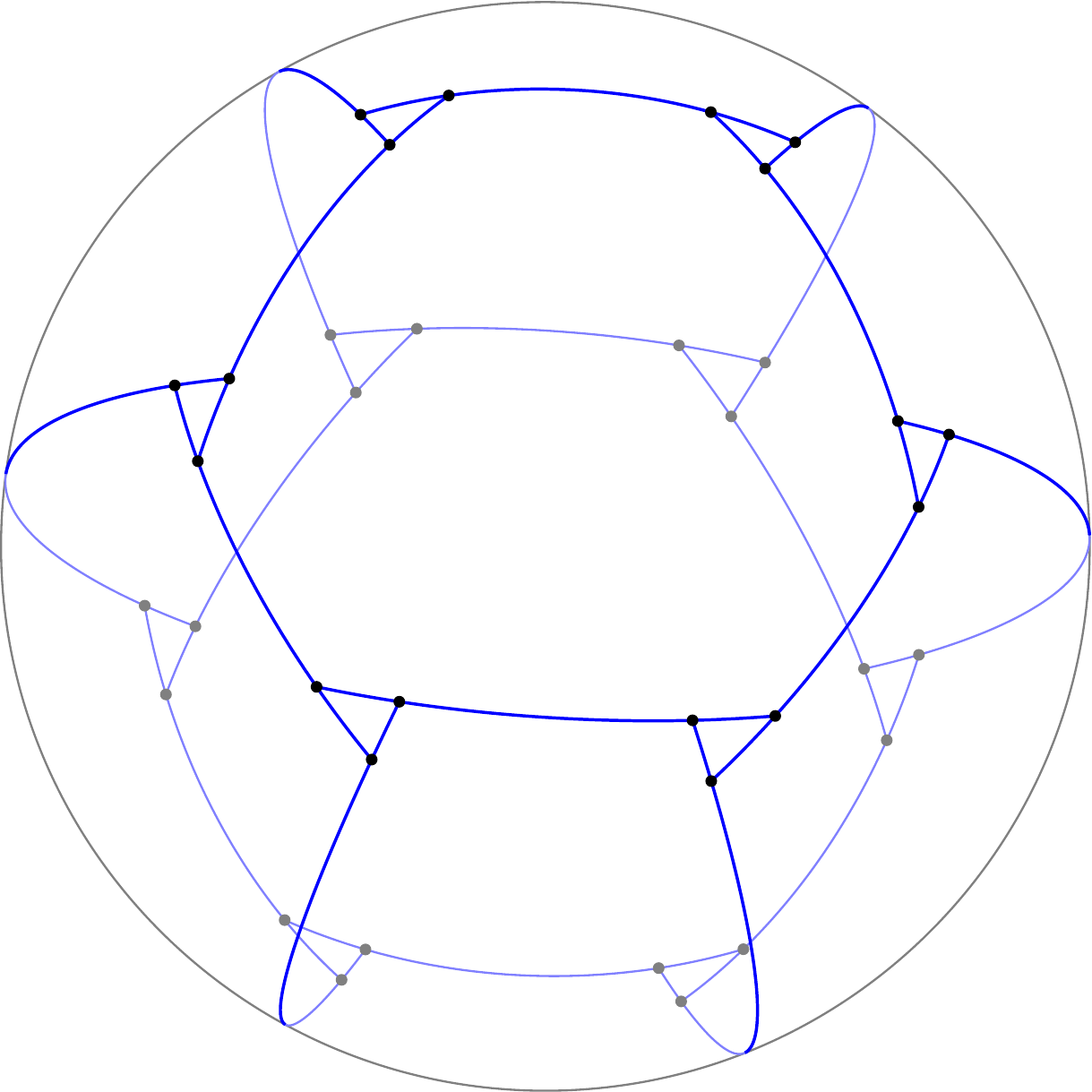}
  \caption{Example of an SOD with 18 arcs.}
  \label{fig:example}
\end{figure}

\paragraph*{Discussion}
We remark that the SOD axioms introduced in~\cite{cccg} are slightly different. In particular, the axiom~A1 therein states that, if two arcs $a,b\in\mathcal D$ have a non-empty intersection, then $a$ hits $b$ or $b$ hits $a$. Also, in~\cite{cccg} it is not postulated that arcs are shorter than great semicircles, but this property is derived as a theorem.

The axioms in the present paper are slightly more inclusive, in that they allow multiple arcs to share an endpoint, as long as the common endpoint is an interior point of some other arc. Allowing such ``degenerate configurations'' in SODs allows us to prove a slightly more general \cref{p:1}, whereas the one in~\cite{cccg} required the extra assumption that the viewpoint $v$ be in ``general position'' with respect to the polyhedron $\mathcal P$.

Moreover, if we removed the assumption that the arcs in an SOD are shorter than great semicircles, we would still be able to prove that arcs are \emph{not longer} than great semicircles, but this would not exclude some degenerate cases in which some arcs are great semicircles.

\paragraph*{Realizability} It is easy to recognize that the visibility maps $S_\mathcal P$ as constructed in \cref{sec:1} indeed provide a model for our theory, as they satisfy all its axioms, at least when $\mathcal P$ is a polyhedron.

\begin{proposition}\label{p:1}
The visibility map $S_\mathcal P$ of any polyhedron $\mathcal P$ with respect to any viewpoint $v$ that does not lie on the boundary of $\mathcal P$ and sees no vertices of $\mathcal P$ satisfies the axioms of Spherical Occlusion Diagrams.
\end{proposition}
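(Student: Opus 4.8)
The plan is to verify each of the three axioms A1–A3 in turn for the visibility map $S_\mathcal P$, after first establishing that $S_\mathcal P$ is indeed a finite collection of geodesic arcs. For the preliminary observation: since $v$ does not lie on the boundary of $\mathcal P$, we may choose the unit sphere $S$ centered at $v$ to avoid the boundary entirely. The visible portions of the edges of $\mathcal P$ project radially onto $S$; the radial projection of a line segment not passing through $v$ is an arc of the great circle cut out by the plane through $v$ and that segment, so each connected visible piece of an edge projects to an arc of a great circle. I would argue that each such arc is shorter than a great semicircle: an edge of $\mathcal P$ is a bounded segment, and a segment whose projection were a great semicircle (or longer) would have to "wrap around" $v$, i.e., its supporting line would pass through $v$ — but then $v$ would see the endpoints (vertices) of that edge, or at least some point would be seen along the line, contradicting the hypothesis that $v$ sees no vertex. (One must be slightly careful here and use the hypothesis that $v$ sees no vertex together with the chosen restrictive notion of visibility; this is the kind of "pathological configuration" the footnote alludes to.) Finiteness is immediate since $\mathcal P$ has finitely many edges and each contributes finitely many maximal visible subsegments (the visibility function along an edge changes finitely often, as occlusions are governed by finitely many faces).

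For \textbf{A1} (internal disjointness): suppose two arcs of $S_\mathcal P$ share a relative-interior point $p$. Then the ray from $v$ through $p$ meets two distinct visible edge-portions at their interiors, say at points $x_1$ on edge $e_1$ and $x_2$ on edge $e_2$ with, without loss of generality, $x_1$ strictly closer to $v$. Then $x_1$ lies on the segment $v x_2$; since $x_1$ is in the interior of the opaque face(s) incident to $e_1$ — here I would use that the interior of an edge, approached from the direction of $v$, is locally blocked by a face of $\mathcal P$ — the open segment $v x_2$ properly crosses the boundary of $\mathcal P$, so $x_2$ is not visible to $v$, a contradiction. The restrictive visibility definition (no grazing) is what makes this crossing "proper" and hence is what the footnote says eliminates coincident arcs. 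So distinct arcs meet at most at endpoints.

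For \textbf{A2} (each arc is blocked at both endpoints): let $\alpha$ be an arc of $S_\mathcal P$, the projection of a maximal visible subsegment $[x, y]$ of an edge $e$, and let $p$ be the projection of the endpoint $x$. If $x$ is an endpoint of $e$, then $x$ is a vertex of $\mathcal P$, visible to $v$ (as a limit of visible points, and using that $v$ sees no face of $\mathcal P$ head-on in a degenerate way) — contradiction. Hence $x$ is in the interior of $e$, and by maximality the visibility of points on $e$ just beyond $x$ fails; I would argue that the obstruction is caused by the projection of some other edge $e'$ passing through $p$, and that $e'$ "blocks" $\alpha$ in the sense of the definition — i.e., the projection of (a visible piece of) $e'$ contains $p$ in its relative interior. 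Establishing that the blocking object is genuinely an edge-arc of $S_\mathcal P$ (rather than, say, a face boundary that is not realized as a visible arc) is the delicate bookkeeping step: one uses that along $e$, at the parameter where visibility is lost, the "horizon" responsible must itself be a visible edge, since a face can occlude only if its own boundary is where the occlusion transition happens, and that boundary point is visible. I expect \emph{this} — correctly identifying the blocker as a member of $\mathcal D$ and showing $p$ is in its \emph{interior} (not its endpoint, which would again be a visible vertex) — to be the main obstacle.

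For \textbf{A3} (all arcs hitting a given arc $\beta$ come from the same side): $\beta$ is the projection of a visible subsegment of an edge $e$, and $e$ bounds exactly one face $f$ of $\mathcal P$ on its visible side (the side facing $v$) — here one uses that from $v$'s vantage, $e$ is seen as the silhouette edge of the solid body locally, so only one of its two incident faces is the "front" face occluding beyond it. Any arc $\alpha$ hitting $\beta$ at an interior point $p$ corresponds to an edge $e'$ whose visible portion terminates because it is occluded by $f$; since $f$ lies on one fixed side of $e$, the projection of $f$ lies on one fixed side of $\beta$, and therefore $\alpha$ approaches $\beta$ from that same side. I would make the "same side" statement precise using the orientation of $S$ and the plane of the great circle through $e$ and $v$, and check that the front face $f$ determines the sign consistently for all such $\alpha$. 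The remaining details — that "front face" is well-defined along the relative interior of $\beta$, and that it is locally constant — follow from $v$ seeing no vertex (so no face-adjacency changes occur along the visible part of $e$).
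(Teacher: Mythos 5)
Your proposal follows essentially the same route as the paper: establish that the projected visible edge-portions are finitely many geodesic arcs, then derive A1 and A3 from the fact that each visible reflex edge has a single ``front'' face occluding everything beyond it, and A2 from the fact that every vertex is occluded, so a maximal visible subsegment must terminate at the silhouette of some occluding face, i.e., in the relative interior of another visible arc. Your level of detail on A2 (correctly flagged as the delicate step) is comparable to, and in places more explicit than, the paper's own treatment. One genuine omission: \cref{d:1} requires an SOD to be \emph{non-empty}, and you never argue that $S_\mathcal P\neq\emptyset$; the paper does this by taking a shortest path from $v$ to any vertex and observing that it must bend at an edge of $\mathcal P$, the first such bend lying on an edge visible to $v$. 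A minor quibble besides: your justification that each arc is shorter than a great semicircle (via ``the supporting line would pass through $v$, so $v$ would see a vertex'') is not the right reason --- the correct and simpler one is that a full line not through $v$ projects onto an open great semicircle, so any bounded subsegment projects onto something strictly shorter; the conclusion stands either way.
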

\begin{proof}
$S_\mathcal P$ is obviously finite. To prove that it is not empty, consider a shortest path from $v$ to any vertex of $\mathcal P$; such a path must bend at the edges of $\mathcal P$, the first of which is visible to $v$.

For each arc $a\in S_\mathcal P$, let $e_a$ be the edge of $\mathcal P$ whose radial projection on the sphere (partly occluded by faces of $\mathcal P$) contains $a$. Since $e_a$ is a line segment of finite length, $a$ must be an arc of a great circle that is shorter than a great semicircle, i.e., a geodesic arc.

Also, since $e_a$ is an edge of a face $F\in\mathcal P$ that is partially visible to $v$, all arcs of $S_\mathcal P$ that touch the interior of $a$ must reach it from the same side (axiom~A3) and cannot continue past $a$ (axiom~A1), because such arcs correspond to edges partially hidden by $F$ (hence, $e_a$ must be a reflex edge of $\mathcal P$).

Finally, the fact that each vertex of $\mathcal P$ is occluded by some face translates into the property that each endpoint of each arc in $S_\mathcal P$ must lie in the interior of another arc of $S_\mathcal P$ (axiom~A2). Note that here we used the fact that $v$ does not lie on a face of $\mathcal P$.
\end{proof}

Note that \cref{p:1} also holds (with the same proof) for any finite non-empty arrangement $\mathcal P$ of internally disjoint polygons, none of which is coplanar with the viewpoint.

It is known that the converse of \cref{p:1} is not true, as not every SOD is the visibility map of a polyhedron~\cite{kimberly}. There is also compelling evidence that a stronger statement holds, which we state next.

\begin{definition}
An SOD $\mathcal D$ is \emph{irreducible} if no proper subset of $\mathcal D$ is an SOD.
\end{definition}

\begin{conjecture}\label{c:1}
There is an \emph{irreducible} Spherical Occlusion Diagram (satisfying the axioms in \cref{d:1}) that is not the visibility map $S_\mathcal P$ of any polyhedron $\mathcal P$ with respect to any viewpoint $v$ that sees no vertices of $\mathcal P$.\footnote{The counterexample in~\cite{kimberly} is not irreducible, because it is constructed by embedding a tiny non-realizable \emph{planar} diagram into a larger SOD, for example within the eye of a swirl (refer to \cref{sec:4} for a definition of ``swirl'').}
\end{conjecture}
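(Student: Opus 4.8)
The plan is to exhibit a single explicit Spherical Occlusion Diagram $\mathcal D_0$, to prove that it is irreducible, and then to prove that it is not the visibility map $S_{\mathcal P}$ of any polyhedron $\mathcal P$ seen from a viewpoint $v$ with no visible vertices. For the candidate $\mathcal D_0$ I would start from the swirl pattern of \cref{sec:4} and its refinements in \cref{sec:5,sec:6}. The known non-realizable SOD of~\cite{kimberly} fails to be irreducible only because the small non-realizable sub-pattern it contains is not itself an SOD --- its arcs are not all blocked --- so the first task is to find a \emph{self-contained} non-realizable configuration in which the obstruction cannot be confined to a removable region: most naturally a single swirl, a cyclically arranged family of swirls sharing their blocking arcs, or a suitably small uniform SOD.

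Second, I would establish irreducibility purely combinatorially, using the structural results of \cref{sec:3,sec:4}. By axiom~A2 every arc must be blocked at both of its endpoints, so an arc $a$ of $\mathcal D_0$ can be deleted only if every arc that $a$ currently blocks is also blocked at that same point by a second arc, and deleting $a$ leaves axioms~A1 and~A3 intact. The goal is therefore to design $\mathcal D_0$ so that every arc is the unique blocker of some hit --- a property the swirl enforces by construction --- and then to argue, appealing to the combinatorial characterization of swirls, that no proper sub-collection can satisfy all three axioms simultaneously.

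The heart of the proof, and the step I expect to be the main obstacle, is non-realizability. Suppose $\mathcal D_0 = S_{\mathcal P}$. As in the proof of \cref{p:1}, each arc $a$ lies on the great circle cut out by the plane through $v$ and the reflex edge $e_a$ carrying it; each hit of $a$ into $b$ forces $e_a$ to be occluded from $v$ by a face incident to $e_b$; and axiom~A3 records from which side. Unwinding these incidences yields a semialgebraic system of linear separation and sidedness constraints on the coordinates of the vertices and edges of $\mathcal P$, and non-realizability amounts to proving this system infeasible. Since realizability of such combinatorial occlusion data is, in general, of the same flavor as oriented-matroid or order-type realizability --- hence as hard as the existential theory of the reals --- a brute-force elimination is hopeless, and one needs the special structure of the candidate to deliver a short contradiction. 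The most promising route seems to be a rotation/winding argument tailored to the swirl: traversing the cycle of arcs of a swirl, the tangent directions at successive hit points, equivalently the directions of the straight edges $e_a$ as seen from $v$, must turn monotonically, and accumulating this turning around the whole swirl contradicts the fact that the $e_a$ are genuine line segments viewed from a single point. If a lone swirl turns out to be realizable, the fallback is to chain several swirls so that each is locally realizable but their shared blocking arcs impose incompatible global orientations --- exactly the phenomenon that~\cite{kimberly} localizes, but which here would be spread irreducibly across the whole diagram.

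Finally, I would verify that the chosen $\mathcal D_0$ genuinely satisfies \cref{d:1}, including the degenerate-endpoint clause, so that the non-realizability statement is not vacuous, and I would keep $\mathcal D_0$ as small as possible, since an explicit small example is the most convincing form of the result.
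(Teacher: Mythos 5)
This statement is an open conjecture in the paper: no proof is given, only the remark that the known non-realizable SOD of~\cite{kimberly} fails to be irreducible, and a reduction showing it suffices to treat simply connected polyhedra. So there is no ``paper proof'' to match your attempt against, and your proposal should be judged on whether it actually closes the gap. It does not. What you have written is a research plan in which every substantive step is deferred: no explicit candidate $\mathcal D_0$ is constructed, the irreducibility argument is described only in general terms (and note that irreducibility is a property of the \emph{whole} collection --- ``no proper subset is an SOD'' --- not merely that no single arc can be deleted, so your per-arc uniqueness-of-blocker criterion is not by itself sufficient), and the non-realizability step, which you correctly identify as the heart of the matter, is left entirely open.

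Moreover, the one concrete mechanism you float for non-realizability --- a winding/turning argument showing that a lone swirl cannot be realized --- is provably a dead end. By \cref{p:1} and the construction in \cref{sec:1}, realizable SODs exist (e.g., the visibility map of the six-rectangle arrangement in \cref{fig:intro}), and by \cref{t:graph} and \cref{t:4swirls} \emph{every} SOD contains swirls, at least four of them. Hence swirls are realized in abundance and no local turning obstruction around a single swirl can exist; any genuine obstruction must be global, which is exactly why the problem is hard and why the paper leaves it as a conjecture. Your fallback (chaining swirls with incompatible orientation constraints) is a reasonable direction, but as stated it is an aspiration, not an argument. In short: the entire content of a proof is missing here, and the statement remains, as in the paper, a conjecture.
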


We remark that \cref{c:1} automatically extends to arbitrary arrangements $\mathcal P$ of disjoint polygons; actually, it is sufficient to prove \cref{c:1} for \emph{simply connected} polyhedra. Indeed, a set of disjoint polygons $\mathcal P$ that gives rise to an SOD $\mathcal D$ with respect to a viewpoint $v$ can easily be augmented by adding a mesh of polygons whose edges are either shared with $\mathcal P$ or concealed from $v$ by polygons in $\mathcal P$. The resulting simply connected polyhedron gives rise to the same SOD $\mathcal D$ (a possible way of implementing this construction is found in~\cite{arxiv}).

\section{Elementary properties\label{sec:3}}
We will prove some basic properties of SODs. In fact, all theorems in this section, with the only exception of \cref{p:tileedge}, hold more generally for finite non-empty collections of geodesic arcs that satisfy axioms~A1 and~A2, but not necessarily~A3.

It is immediate to prove a stronger form of axiom~A2.

\begin{proposition}\label{p:04}
Every arc in an SOD hits exactly two distinct arcs, one at each endpoint.
\end{proposition}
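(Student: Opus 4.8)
The plan is to split the statement into two independent claims about a fixed arc $a\in\mathcal D$ with endpoints $p$ and $q$: first, that $a$ hits exactly one arc at $p$ (and, symmetrically, exactly one at $q$); second, that these two arcs are distinct. Since the relation ``$a$ hits $b$'' can only hold with the point of contact being an endpoint of $a$, establishing both claims immediately yields that $a$ hits exactly the two arcs it meets at $p$ and at $q$, and that this is a $2$-element set. Only axioms~A1 and~A2 will be used, not~A3.

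For the first claim, note that A2 already supplies, at the endpoint $p$, an arc of $\mathcal D$ that blocks $a$ there; by the very definition of ``blocks'' this arc is non-collinear with $a$ and has $p$ in its relative interior, so $a$ hits at least one arc at $p$. For the upper bound, suppose $a$ hits two arcs $b$ and $c$ at $p$. Then $p$ lies in the relative interior of both $b$ and $c$, so $b$ and $c$ share a relative-interior point; by A1 two distinct arcs of $\mathcal D$ cannot do this, hence $b=c$. Thus exactly one arc is hit at $p$, and the same argument applies at $q$.

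For the second claim, write $b_p$ for the arc hit at $p$ and $b_q$ for the arc hit at $q$, and suppose for contradiction that $b_p=b_q=:b$. Then both $p$ and $q$ lie in the relative interior of $b$. Because $a$ is a geodesic arc, $p$ and $q$ are distinct and non-antipodal, so there is a \emph{unique} great circle through them; since this great circle carries $a$ (as $a$ is the geodesic arc joining $p$ and $q$) and also carries $b$ (as $b$ contains both $p$ and $q$), the arcs $a$ and $b$ are collinear --- contradicting the fact that ``$a$ hits $b$'' is only defined, and in~A2 only asserted, for non-collinear pairs. Hence $b_p\ne b_q$, and $a$ hits exactly the two distinct arcs $b_p$ and $b_q$, one at each endpoint.

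I do not expect a genuine obstacle here: the argument is a direct unwinding of the axioms and the definition of ``hits''. The two places that deserve a moment's care are (i) observing that the arcs promised by A2 are ``hit'' in the technical sense precisely because ``blocks'' is defined only between non-collinear arcs (so the non-collinearity hypothesis needed to talk about hitting comes for free), and (ii) using that a geodesic arc has non-antipodal endpoints, which is exactly what legitimizes the ``unique great circle'' step in the second claim.
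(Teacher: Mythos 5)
Your proof is correct and, for the core of the statement, follows the same route as the paper: axiom~A2 gives at least one blocking arc at each endpoint, and axiom~A1 forces uniqueness there because two arcs hit at the same point would share an interior point. You additionally verify that the arcs hit at the two endpoints are distinct (via non-antipodality of the endpoints of a geodesic arc and the non-collinearity built into the definition of ``hits''), a step the paper leaves implicit; this is a correct and worthwhile completion rather than a different approach.
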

\begin{proof}
Let $p$ be an endpoint of an arc $a$. By axiom~A2, $a$ hits at least one arc $b$ at $p$. If $a$ hit a second arc $b'$ at $p$, then $p$ would be interior to both $b$ and $b'$, contradicting axiom~A1.
\end{proof}

The following statement implies that no two arcs in an SOD can hit each other.

\begin{proposition}\label{p:05}
No two arcs in an SOD intersect in more than one point.
\end{proposition}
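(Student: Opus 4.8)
We want to show two arcs $a, b$ in an SOD intersect in at most one point. First, note that if $a$ and $b$ are collinear (lie on the same great circle), then since they are internally disjoint geodesic arcs (axiom A1) and each is shorter than a great semicircle, they can overlap in at most an endpoint — so at most one point. Hence assume $a$ and $b$ are non-collinear. Two distinct great circles meet in exactly two (antipodal) points, so $a$ and $b$ can share at most two points, and if they share two points these must be antipodal. The goal is to rule out the antipodal case.

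So suppose for contradiction $a$ and $b$ meet in two antipodal points $p$ and $q$. By axiom A1, $a$ and $b$ are internally disjoint, so each of $p, q$ must be an endpoint of $a$ or an endpoint of $b$. That gives a small case analysis: (i) $p$ and $q$ are both endpoints of $a$; (ii) both are endpoints of $b$; (iii) one is an endpoint of $a$ and the other an endpoint of $b$; (iv) a point could be an endpoint of both. Case (i) says $a$ is an arc of a great circle whose endpoints are antipodal, i.e., $a$ is a great semicircle — contradicting the requirement that geodesic arcs are strictly shorter than a great semicircle. Case (ii) is symmetric. In case (iii), say $p$ is an endpoint of $a$ and $q$ is an endpoint of $b$; then the interior point situation must still be reconciled with A1, but more to the point we can invoke the hitting structure: since $p \in b$ (as $a$ and $b$ meet at $p$ and $p \ne q$ is interior to $b$ in this subcase, or handle $p$ being also an endpoint of $b$ separately), $a$ hits $b$ at $p$, and symmetrically $b$ hits $a$ at $q$; but then by the remark preceding the proposition (which I may instead prove directly here) two arcs hitting each other is impossible — the cleanest direct argument: if $a$ hits $b$ at $p$ and $b$ hits $a$ at $q$, then $p$ is interior to $b$ and $q$ is interior to $a$, yet $p, q$ are the only intersection points of the two great circles, so the sub-arc of $a$ from $p$ to $q$ together with the sub-arc of $b$ from $q$ to $p$ would be forced into an impossible length configuration (each geodesic arc being shorter than a semicircle forces the two intersection points, if interior to the respective arcs, to be non-antipodal).

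The crux, and the step I expect to be the main obstacle, is case (iii)/(iv) — showing that $a$ and $b$ cannot "cross through" each other at two antipodal endpoints. The clean way is a length argument on the great circle $C$ shared by (the supporting circles of) $a$ and $b$. The points $p$ and $q$ split $C$ into two great semicircles, call them $C_1$ and $C_2$. Since $a$ has endpoints among $\{p,q\}$ and is shorter than a semicircle, $a$ cannot have both $p$ and $q$ as endpoints (that was case (i)); so $a$ has exactly one of them, say $p$, as an endpoint, and its other endpoint $p'$ lies strictly in the interior of one of the semicircles, say $C_1$, with the whole of $a$ inside $C_1$. Then $a$ does \emph{not} contain $q$ at all — but we assumed $q \in a \cap b \subseteq a$, a contradiction. (If instead $q$ is interior to $a$: then both $p$ and $q$ lie on $a$, with $q$ interior, forcing $a$ to extend on both sides of $q$, hence $a$ contains an arc from $p$ to $q$ of length $\pi$ plus more — again $a$ would be at least a semicircle, contradiction.) So in every subcase we derive that some arc is not shorter than a great semicircle, contradicting the definition of geodesic arc. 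This closes all cases and proves the proposition.

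In summary, the whole proof reduces to: (1) dispose of the collinear case by A1 and the semicircle bound; (2) in the non-collinear case the two circles meet in exactly two antipodal points, and A1 forces each to be an endpoint of $a$ or of $b$; (3) a short length argument — the endpoints of a geodesic arc are never antipodal, and a geodesic arc containing an interior point $x$ extends strictly past $x$ on both sides — shows that having both intersection points lie on the same arc, or one on each while "crossing," always forces some arc to be at least a great semicircle, which is forbidden. No use of axiom A3 is needed, consistent with the remark at the start of \cref{sec:3}.
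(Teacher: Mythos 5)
Your proof is correct and rests on exactly the same idea as the paper's: two internally disjoint arcs sharing two points force those points to be antipodal (distinct great circles meet only at antipodal points), and an arc of a great circle containing two antipodal points cannot be shorter than a great semicircle, contradicting the definition of a geodesic arc. The paper states this in one line; your version just spells out the case analysis (including the collinear case, which the paper leaves implicit).
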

\begin{proof}
Two internally disjoint arcs sharing two points cannot be shorter than great semicircles, contradicting the assumption that an SOD consists of geodesic arcs (see \cref{fig:pr05}).
\end{proof}

\begin{figure}[ht]
  \centering
  \includegraphics[scale=\figscale]{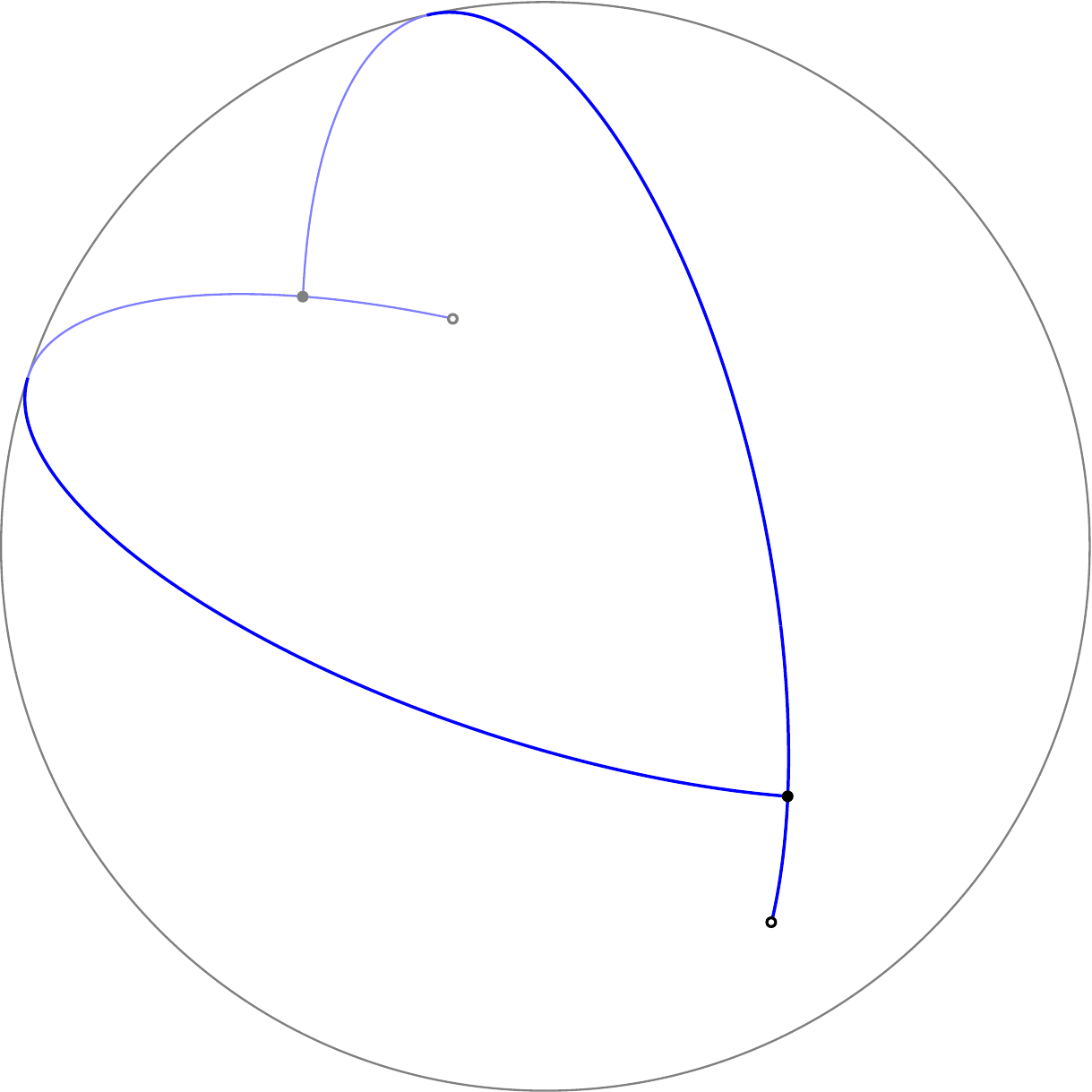}
  \caption{Two non-collinear arcs can only intersect in two antipodal points.}
  \label{fig:pr05}
\end{figure}

For the next results, we describe a general construction called ``monotonic walk''; refer to  \cref{fig:pr03}. Let $\mathcal D$ be an SOD, and let $p$ and $p'$ be antipodal points on the unit sphere. A \emph{clockwise monotonic walk around $p$} starting from a point $x_0\notin\{p,p'\}$ on an arc $a_0\in \mathcal D$ is a sequence of pairs $(x_i, a_i)$, where $x_i\in a_i \in\mathcal D$ for all $i\geq 0$, defined inductively as follows. Assuming $x_i$ is a point of $a_i\in\mathcal D$ distinct from $p$ and $p'$, we distinguish two cases. If the great circle containing $a_i$ also contains $p$ and $p'$, then $x_{i+1}$ is an endpoint of $a_i$ such that the geodesic arc $x_ix_{i+1}$ contains neither $p$ nor $p'$ (such an endpoint must exist, or else $a_i$ would not be shorter than a great semicircle; in the special case where $x_i$ is an endpoint of $a_i$, we take $x_{i+1}=x_i$). Otherwise, the great circle containing $a_i$ separates $p$ from $p'$. Following this great circle in the clockwise direction with respect to $p$ starting at $x_i$, we define $x_{i+1}$ as the first endpoint of $a_i$ encountered. In both cases, we let $a_{i+1}$ be the (unique, due to \cref{p:04}) arc of $\mathcal D$ that blocks $a_i$ at $x_{i+1}$.

\begin{figure}[ht]
  \centering
  \includegraphics[scale=\figscale]{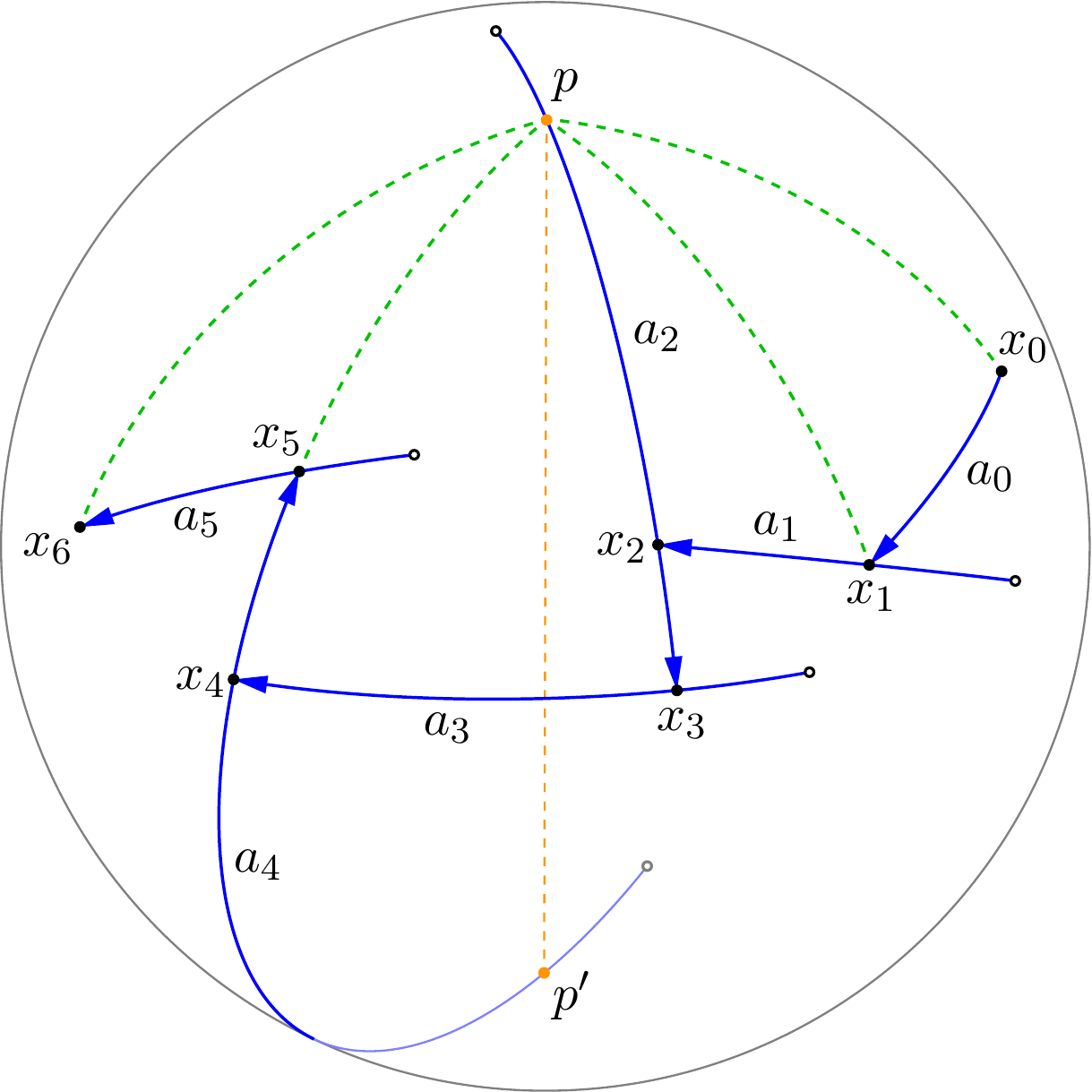}
  \caption{The initial steps of a clockwise monotonic walk around $p$ starting from $x_0\in a_0$.}
  \label{fig:pr03}
\end{figure}

A \emph{counterclockwise monotonic walk} around a point is defined similarly.

\begin{proposition}\label{p:semi}
Given an SOD $\mathcal D$, the relative interior of any great semicircle on the unit sphere intersects at least one arc of $\mathcal D$.
\end{proposition}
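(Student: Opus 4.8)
The plan is to argue by contradiction, using the monotonic walk construction just introduced. Suppose some great semicircle $C$, with antipodal endpoints $p$ and $p'$, has relative interior disjoint from every arc of $\mathcal D$. Choose spherical coordinates in which $p$ and $p'$ are the poles and $C$ is the meridian at longitude $0$; for a non-polar point $x$ write $\theta(x)\in[0,2\pi)$ for its longitude, so that $\theta(x)=0$ exactly when $x$ lies on $C$. Since $\mathcal D$ is non-empty, fix an arc $a_0\in\mathcal D$ and an interior point $x_0$ of $a_0$. Because $a_0$ avoids the interior of $C$, the point $x_0$ is not a pole and $x_0\notin C$, so $\theta(x_0)\in(0,2\pi)$. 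Start a clockwise monotonic walk around $p$ at $x_0\in a_0$; by axiom~A2 and \cref{p:04} each step is well-defined, so we obtain an infinite sequence $(x_i,a_i)_{i\ge 0}$ in which $x_i$ lies in the relative interior of $a_i$ for every $i$ (for $i=0$ by choice, and for $i\ge 1$ because $a_i$ blocks $a_{i-1}$ at $x_i$).

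The crux of the proof is to control the longitude $\theta_i:=\theta(x_i)$ along the walk. First I would check that the walk never reaches a pole: in a collinear step $x_{i+1}$ cannot be $p$ or $p'$, since the geodesic arc $x_ix_{i+1}$ is chosen to contain neither pole; in a non-collinear step the great circle carrying $a_i$ avoids both poles, and $x_{i+1}$ lies on it. Hence $\theta_i$ is defined for all $i$, and $\theta_i\ne 0$, since $x_i\in a_i$ and $a_i$ does not meet the interior of $C$. Next I would analyse how $\theta_i$ changes. In a collinear step $a_i$ lies on a meridian great circle and the subarc $x_ix_{i+1}\subseteq a_i$ avoids both poles, so $x_i$ and $x_{i+1}$ lie on the same one of the two meridians forming that circle, giving $\theta_{i+1}=\theta_i$. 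In a non-collinear step we move clockwise about $p$ along a pole-free great circle, so the longitude is strictly decreasing along $x_ix_{i+1}$; moreover $x_ix_{i+1}\subseteq a_i$ is disjoint from $C$, so the longitude cannot take the value $0$ and therefore cannot wrap past it. Consequently $(\theta_i)$ is a non-increasing sequence in $(0,2\pi)$, constant exactly at collinear steps and strictly decreasing at non-collinear steps.

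For $i\ge 1$ the point $x_i$ is an endpoint of the arc $a_{i-1}\in\mathcal D$, and $\mathcal D$ is finite, so $\theta_i$ takes only finitely many values; being non-increasing, it is eventually constant, say $\theta_i=\theta^*$ for all $i\ge N$. Then for $i\ge N$ every step preserves the longitude and hence, by the dichotomy above, is a collinear step, so every $a_i$ with $i\ge N$ lies on a meridian great circle; since that circle passes through the non-polar point $x_i$ of longitude $\theta^*$, it must be the meridian great circle through longitudes $\theta^*$ and $\theta^*+\pi$, which is the same great circle for all $i\ge N$. In particular $a_N$ and $a_{N+1}$ are collinear. But $a_{N+1}$ blocks $a_N$ at $x_{N+1}$ by construction of the walk, and the relation ``blocks'' is defined only between non-collinear arcs --- a contradiction, which completes the proof.

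The step I expect to be the main obstacle is precisely establishing the monotonicity and boundedness of $(\theta_i)$: one must rule out both that the walk drifts into a pole and that it slips past the meridian $C$. Both facts rely on combining the standing hypothesis --- that no arc meets the interior of $C$ --- with the defining property of SODs that arcs are strictly shorter than great semicircles. Without the latter, an arc could run from pole to pole or wind around a pole, breaking the monotonicity of $\theta$, and the argument would collapse.
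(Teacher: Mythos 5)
Your proof is correct and follows essentially the same route as the paper: a clockwise monotonic walk around $p$ whose longitude is non-increasing, strictly decreasing at non-collinear steps, combined with the observation that two consecutive longitude-preserving steps would force two arcs in the blocking relation to be collinear; you merely phrase the argument by contradiction and make the paper's ``steady progress'' claim explicit via the finiteness pigeonhole. One small slip: the fact that $a_0$ avoids the relative interior of $C$ does not by itself prevent an interior point $x_0$ of $a_0$ from being a pole (an arc may still pass through $p$ or $p'$), so you should instead choose $x_0$ among the infinitely many points of $a_0$ distinct from $p$ and $p'$, exactly as the paper does.
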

\begin{proof}
Let $p$ and $p'$ be the (antipodal) endpoints of a great semicircle $c$. Since $\mathcal D$ is not empty, there exists an arc $a_0\in\mathcal D$. Recall that the endpoints of a geodesic arc are distinct; therefore, $a_0$ contains infinitely many points, and in particular it contains a point $x_0$ distinct from $p$ and $p'$. Consider a clockwise monotonic walk $((x_i, a_i))_{i\geq 0}$ around $p$ starting from $x_0$. Observe that any $a_i$ that is an arc of a great circle through $p$ and $p'$ must be followed by an arc $a_{i+1}$ that is not. Hence, the monotonic walk makes steady progress around $p$. Since $\mathcal D$ is finite, in a finite amount of steps the monotonic walk touches the interior of every great semicircle with endpoints $p$ and $p'$, including $c$.
\end{proof}

\begin{proposition}\label{p:06}
An SOD partitions the unit sphere into spherically convex regions.
\end{proposition}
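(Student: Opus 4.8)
The plan is to reduce \cref{p:06} to the following key fact: for every region $R$ (connected component of the complement of $\bigcup\mathcal D$) and every great circle $C$, the set $C\cap R$ is connected. Granting this, \cref{p:06} follows quickly. Given $x,y\in R$, if they are non-antipodal let $C$ be the great circle through them; then $C\cap R$ is a connected open arc of $C$ containing $x$ and $y$, and by \cref{p:semi} it cannot contain the relative interior of a great semicircle, so it has length less than $\pi$; hence the sub-arc of $C\cap R$ joining $x$ to $y$ is the minor geodesic arc between them, and it lies in $R$. If instead $x$ and $-x$ both lay in $R$, then for any great circle $C$ through $x$ the connected set $C\cap R$ would contain antipodal points and thus a great semicircle's relative interior, contradicting \cref{p:semi}; so $R$ contains no antipodal pair. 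Thus $R$ is spherically convex.

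I would next record the (easy) \emph{local convexity} of regions: at every vertex $p$ of the arrangement lying on $\partial R$, the interior angle of $R$ is at most $\pi$. By \cref{p:04} and~A1, $p$ lies in the relative interior of a \emph{unique} arc $c\in\mathcal D$, while every other arc of $\mathcal D$ through $p$ has $p$ as an endpoint (and so is transversal to $c$ at $p$, since a collinear arc with an endpoint in the interior of $c$ would overlap $c$, violating~A1). Hence $c$ cuts a small neighbourhood of $p$ into two half-discs, no arc of $\mathcal D$ crosses from one to the other, and $R$ is confined to a single sector within one of them; in particular its interior angle at $p$ is at most $\pi$. At any non-vertex boundary point the boundary of $R$ is locally a single geodesic arc.

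The hard part, which I expect to be the main obstacle, is proving the key fact that $C\cap R$ is connected. Suppose not. Then $\bigcup\mathcal D$ cuts $C$ into finitely many maximal open arcs, each contained in a single region, and $R$ occupies at least two of them that are non-consecutive; hence there is an arc $A\subseteq C$ whose endpoints lie in $\bigcup\mathcal D$, whose relative interior is disjoint from $R$, and near whose two endpoints $R$ is adjacent to $C$ from the side \emph{outside} $A$. The goal is to derive a contradiction from local convexity: the angle bounds severely constrain how the great circle $C$ may leave $R$ at one endpoint of $A$ and re-enter it at the other, and one should be able to turn this into a contradiction (for instance by exhibiting a great semicircle whose relative interior lies in a single region, contradicting \cref{p:semi}). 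An alternative and perhaps cleaner route is to first show, using local convexity together with \cref{p:semi}, that $R$ lies in an open hemisphere $H$; then the gnomonic (central) projection $H\to\mathbb R^2$ carries geodesic arcs to line segments and $R$ to a connected open planar region all of whose interior angles are at most $\pi$, which is convex by a standard convexity criterion (the Tietze--Nakajima theorem), and the claim follows by pulling back. Either way, ruling out that $R$ ``wraps around'' the sphere is the delicate step.
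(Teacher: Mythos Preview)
Your reduction in the first paragraph (connectedness of $C\cap R$ for every great circle $C$ implies spherical convexity, via \cref{p:semi}) is clean and correct, and your local-convexity observation is fine. But the proposal has a genuine gap precisely where you flag it: you do not actually prove that $C\cap R$ is connected, and neither of your two suggested routes closes the gap as stated. The first route (``angle bounds should yield a contradiction'') is a hope, not an argument. The second route is circular in the order of dependencies: you propose to show $R$ lies in an open hemisphere \emph{before} establishing convexity, but this is exactly \cref{p:tileanti}, which in the paper is a \emph{corollary} of \cref{p:06}. Local convexity together with \cref{p:semi} does not obviously confine $R$ to a hemisphere; a priori $R$ could be a thin non-simply-connected band, and you cannot invoke connectedness of $\bigcup\mathcal D$ (\cref{p:07b}) either, since that comes later.

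The paper sidesteps the whole difficulty with a direct shortening argument using the monotonic-walk construction already introduced for \cref{p:semi}. Given $p,q$ in the same component, take a geodesic polygonal chain $p=p_0,p_1,\dots,p_k=q$ in that component with $k$ minimal. If $k\ge 2$, look at $p$, $p'=p_1$, $p''=p_2$. First, $p$ and $p''$ are not antipodal, since otherwise $pp'\cup p'p''$ would be a great semicircle whose interior misses $\bigcup\mathcal D$, contradicting \cref{p:semi}. Now if the geodesic $pp''$ met $\bigcup\mathcal D$ at some point $x$, a monotonic walk around $p$ starting at $x$ (in the appropriate direction) would have to cross $pp'\cup p'p''$, which is impossible since that chain avoids $\bigcup\mathcal D$. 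Hence $pp''$ lies in the region, and replacing $pp'\cup p'p''$ by $pp''$ contradicts minimality of $k$. This is the missing idea: the monotonic walk is the tool that lets you trade a two-segment detour for a single geodesic without ever needing hemisphere containment or Tietze--Nakajima.
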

\begin{proof}
Let $\mathcal D$ be an SOD on the unit sphere $S$, let $D$ be the union of the arcs in $\mathcal D$, and let $p$ and $q$ be two points in the same connected component of $S\setminus D$. We will prove that $p$ and $q$ are connected by a single geodesic arc that does not intersect $D$.

By assumption and by the finiteness of $\mathcal D$, there is a chain $C$ consisting of $k$ geodesic arcs (drawn in green in \cref{fig:pr06}) that connects $p$ and $q$ without intersecting $D$. Let us choose $C$ so that $k$ is minimum. If $k=1$, there is nothing to prove. Thus, assume that $k\geq 2$.

Let $pp'$ and $p'p''$ be the first two geodesic arcs of $C$. If $p$ and $p''$ are antipodal, then $pp'$ and $p'p''$ are collinear, and their union is a great semicircle. By \cref{p:semi}, $D$ intersects $pp'\cup p'p''$, which is a contradiction. Hence $p$ and $p''$ are not antipodal, and there is a unique geodesic arc $pp''$ (drawn in orange).

Assume for a contradiction that $D$ intersects $pp''$ in $x\notin \{p,p''\}$. Then, either a clockwise or a counterclockwise monotonic walk around $p$ starting from $x$ must intersect $pp'\cup p'p''$, which is again a contradiction (see \cref{fig:pr06}). Therefore, $D$ does not intersect $pp''$, and we can replace $pp'$ and $p'p''$ in $C$ by the single geodesic arc $pp''$. Since this contradicts the minimality of $k$, we conclude that $k=1$.
\end{proof}

\begin{figure}[ht]
  \centering
  \includegraphics[scale=\figscale]{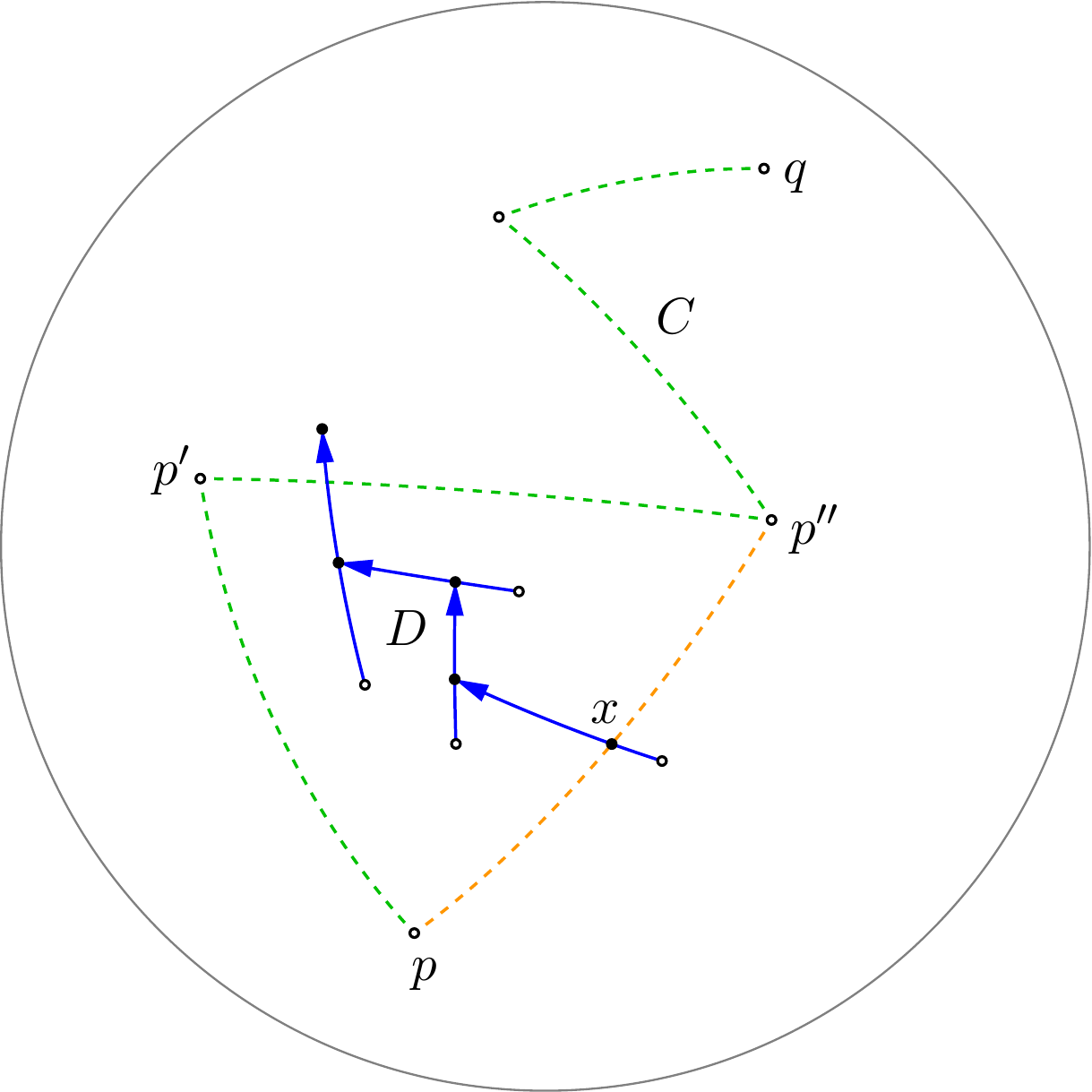}
  \caption{Proof of \cref{p:06}: The chain $C$ can be simplified by connecting $p$ and $p''$.}
  \label{fig:pr06}
\end{figure}

\begin{definition}
Each of the (spherically convex) regions into which the unit sphere is partitioned by an SOD is called a \emph{tile}.
\end{definition}

\begin{corollary}\label{p:tileanti}
In an SOD, no tile (including its boundary) contains two antipodal points.
\end{corollary}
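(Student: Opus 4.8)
The plan is to argue by contradiction: suppose some tile $T$ contains two antipodal points $p$ and $p'$. The key fact to exploit is that a tile is spherically convex (\cref{p:06}), so the geodesic arc between any two non-antipodal points of $T$ stays inside $T$; and the constraint to invoke is \cref{p:semi}, which says every great semicircle meets an arc of $\mathcal D$ in its relative interior. Since $p$ and $p'$ are themselves antipodal, we cannot directly connect them by a geodesic arc, so first I would perturb: pick a point $q$ in the interior of $T$ (or on its boundary) that is distinct from both $p$ and $p'$ and not antipodal to either — such a $q$ exists because $T$ has nonempty interior, being a $2$-dimensional region, and the bad set (the two antipodes of $p,p'$, namely $p',p,$ and the point antipodal to $q$... wait, we just need $q\ne p,p'$) is finite.

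Then I would form the two geodesic arcs $pq$ and $qp'$. Because $T$ is spherically convex and $p,q\in T$ with $p,q$ non-antipodal, the arc $pq$ lies in $T$; similarly $qp'$ lies in $T$. Hence the union $pq\cup qp'$ is contained in $T$, and in particular it is disjoint from $D$ (the union of the arcs of $\mathcal D$), since the interior of $T$ misses $D$ and the boundary of $T$ consists of sub-arcs of $\mathcal D$ that the geodesics $pq,qp'$ could only touch at isolated points — here I need to be a little careful, so the cleaner route is to take $q$ in the \emph{relative interior} of $T$, which forces the relative interiors of $pq$ and $qp'$ to lie in the interior of $T$ and hence to avoid $D$ entirely. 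Now $pq\cup qp'$ is a path from $p$ to its antipode $p'$ whose relative interior avoids $D$. If $q$ happens to lie on the great circle through $p$ and $p'$, then $pq\cup qp'$ is (a reparametrization of) a great semicircle and we contradict \cref{p:semi} immediately. If not, replace $q$ by a point on the great circle through $p,p'$ that still lies in the interior of $T$: such a point exists because the interior of $T$ is an open set containing points of both $pq\setminus\{p\}$ and...

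Actually the more robust finish avoids that case split. Having the geodesics $pq$ and $qp'$ inside the open tile, consider the great circle $\Gamma$ through $p$ and $p'$. The two great semicircles of $\Gamma$ with endpoints $p,p'$ each meet $D$ in their relative interiors by \cref{p:semi}. But by spherical convexity applied to $p,p'$ via the intermediate point $q$: any point $x$ on $\Gamma$ with $x\ne p,p'$ is non-antipodal to one of $p,p'$; pushing a short geodesic from $x$ toward $q$ we can show a neighborhood of $x$ along $\Gamma$... this is getting technical. I expect \textbf{the main obstacle} to be precisely this antipodality bookkeeping — the fact that convexity as stated in \cref{p:06} only directly governs non-antipodal pairs, so one must insert an auxiliary interior point $q$ and check that the resulting two-arc path can be straightened (or used as-is) to produce a great semicircle whose relative interior misses $D$, contradicting \cref{p:semi}. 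The cleanest phrasing is likely: choose $q$ in the relative interior of $T$ lying on the great circle through $p$ and $p'$ but distinct from both (possible since the interior of $T$ is a nonempty open set and, if it avoided this great circle entirely, $T$ would lie in one open hemisphere, yet it contains the antipodal pair $p,p'$ on the boundary of that hemisphere — a contradiction once we note a convex region meeting both $p$ and $p'$ must meet the equator $\Gamma$ between them); then $pq$ and $qp'$ together form one of the two great semicircles of $\Gamma$, its relative interior lies in the interior of $T$ and hence misses $D$, contradicting \cref{p:semi}. I would present it in that order, spending the bulk of the writeup on justifying the existence of the well-placed $q$.
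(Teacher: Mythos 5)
Your overall strategy is exactly the paper's: use spherical convexity (\cref{p:06}) to exhibit a great semicircle with endpoints $p$ and $p'$ whose relative interior lies inside the tile, then contradict \cref{p:semi}. The final proof you settle on is correct. However, the step you single out as ``the main obstacle'' --- finding a point $q$ of the tile's interior on \emph{the} great circle through $p$ and $p'$ --- rests on a false premise: since $p$ and $p'$ are antipodal, there is no unique great circle through them; \emph{every} great circle through $p$ also passes through $p'$. Consequently the existence of a well-placed $q$ is trivial: take any point $q$ in the (nonempty, open) interior of $T$ with $q\notin\{p,p'\}$. Then $p$, $q$, $p'$ all lie on the unique great circle through $q$ and $p$, and the geodesic arcs $pq$ and $qp'$, being the shorter arcs of that circle and leaving $q$ in opposite directions (because $p$ and $p'$ are antipodal on it), automatically concatenate into a great semicircle whose relative interior lies in the interior of $T$, contradicting \cref{p:semi}. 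The hemisphere argument you sketch to produce $q$ is therefore unnecessary, and as stated it is also shaky (``a convex region meeting both $p$ and $p'$ must meet the equator between them'' is asserted, not proved, and the equator in question is not even well defined). So the proof is right, but the part you planned to spend most of the writeup on is a non-issue, while the point deserving a sentence of care is the one you pass over: $p$ and $p'$ may lie on the \emph{boundary} of $T$, whereas \cref{p:06} speaks of points in the open tile, so one should note that the open geodesic arc from an interior point to a closure point of a convex region stays in the interior.
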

\begin{proof}
If two antipodal points $p$ and $p'$ were in a same tile $T$ (or on its boundary), then by \cref{p:06} there would be a great semicircle with endpoints $p$ and $p'$ whose interior is entirely contained in $T$. However, this would contradict \cref{p:semi}.
\end{proof}

Since an SOD is a finite collection of arcs, tiles are \emph{spherical polygons}, whose boundaries have finitely many vertices and edges.

\begin{proposition}\label{p:tileedge}
In an SOD, any arc coincides with an edge of a tile.
\end{proposition}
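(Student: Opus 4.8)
The plan is to show that every arc $a\in\mathcal D$ appears on the boundary of some tile as a full edge, i.e., that no point in the relative interior of $a$ is interior to a tile, and that $a$ is not merely a portion of a longer edge. First I would address the ``interior point'' issue. Suppose, for contradiction, that a point $x$ in the relative interior of $a$ lies in the interior of a tile $T$. Since tiles are open spherically convex regions and $a$ is part of the arc set $D$, the point $x$ lies on $D$, so $x$ cannot be in the interior of a tile — this is immediate from the definition of tile as a connected component of $S\setminus D$. Hence every point of $a$ (interior or endpoint) lies on the boundary between tiles, so locally near an interior point $x$ of $a$, the arc $a$ separates two tiles (or the same tile on both sides, which I must rule out).

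Next I would argue that $a$ is actually a single edge, not a concatenation of several tile edges. A vertex of the tiling occurs at a point $x\in a$ only if some other arc of $\mathcal D$ has an endpoint at $x$ (feeding into $a$) or crosses $a$ there; by axiom~A1 no arc crosses $a$, and by axioms~A2--A3 and \cref{p:04} any arc hitting $a$ at an interior point $x$ does so from one fixed side, and does not continue to the other side. So at such a point $x$, on one side of $a$ there may be several tiles meeting, but on the \emph{other} side of $a$ there are no arcs emanating from $x$, hence a single tile borders $a$ along a neighborhood of $x$ on that side. Since $a$ is a geodesic arc and the tiles are spherically convex, I would conclude that the tile on the ``free'' side of $a$ has $a$ in its entirety as one edge: walking along the boundary of that tile, upon reaching an endpoint $p$ of $a$, the boundary must turn (because by \cref{p:04} $a$ hits a genuinely different, non-collinear arc at $p$), and it cannot turn before reaching $p$ since no arc endpoint or crossing lies in the interior of $a$ on that side. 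The spherical convexity of the tile (\cref{p:06}) guarantees the boundary edge is exactly the geodesic arc $a$, with its two endpoints being vertices of the tile.

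The main obstacle I anticipate is ruling out the possibility that the \emph{same} tile lies on both sides of $a$ — in other words, that $a$ is a ``slit'' hanging into a single region rather than separating two regions. If this happened, the tile would not be simply connected or would fail to be spherically convex in the required sense; more concretely, one could find two points of that tile, one just off each side of $a$ near its midpoint, whose connecting geodesic arc is forced to cross $D$, contradicting \cref{p:06}. Alternatively, I would use \cref{p:04}: since $a$ is blocked at each endpoint by a genuinely non-collinear arc, the region boundary genuinely bends at both endpoints of $a$, which is incompatible with $a$ being a slit into a convex region (a convex spherical polygon cannot have a pendant edge, since its boundary is a simple closed geodesic polygon). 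Making this last point fully rigorous — that the tile boundary is a simple closed curve and $a$ occurs on it as a single edge — is where I would expect to spend the most care, likely invoking spherical convexity together with the local structure at interior points of $a$ established above.
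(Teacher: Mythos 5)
Your proposal is correct and follows essentially the same route as the paper: the key step in both is that axiom~A3 forces all arcs hitting $a$ to come from one side, so the other side of $a$ is bordered by a single (spherically convex) tile, of which $a$ is then a full edge. Your additional care about ruling out the ``slit'' case and about $a$ not being subdivided is sound (and handled correctly via convexity from \cref{p:06} and \cref{p:04}), but it is elaboration on the same argument rather than a different one.
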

\begin{proof}
Each arc $a$ is incident to tiles on both sides. There are multiple tiles on the same side of $a$ if and only if there are arcs hitting $a$ from that side. Thus, axiom~A3 implies that $a$ cannot have multiple tiles on both sides; in other words, a side of $a$ must have exactly one tile, and $a$ is an edge of that tile.
\end{proof}

The \emph{contact graph} of an SOD $\mathcal D$ is the undirected graph $(\mathcal D, \mathcal E)$, where $\mathcal E$ is the set of pairs of arcs $\{a,b\}\subseteq \mathcal D$ such that $a$ hits $b$. The following result implies that the contact graph of any SOD is 2-connected.

\begin{proposition}\label{p:07a}
Removing any one arc from an SOD and taking the union of the remaining arcs yields a connected subset of the unit sphere.
\end{proposition}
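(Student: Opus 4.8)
The plan is to work with the tiling of the sphere induced by the SOD. Fix an arbitrary arc $a^*\in\mathcal D$, set $\mathcal D'=\mathcal D\setminus\{a^*\}$, and let $D'$ denote the union of the arcs in $\mathcal D'$; note that $D'$ still contains both endpoints of $a^*$ (each is interior to another arc, by~\cref{p:04}) together with every point at which an arc of $\mathcal D$ hits $a^*$. By~\cref{p:06}, $\mathcal D$ partitions the unit sphere $S$ into finitely many tiles, each a convex spherical polygon; being free of antipodal pairs by~\cref{p:tileanti}, each tile is a topological disk with connected boundary. As shown in the proof of~\cref{p:tileedge}, one side of $a^*$ borders a single tile $T$ of which $a^*$ is a full edge, while on the other side the points $h_1,\dots,h_m$ at which arcs of $\mathcal D$ hit $a^*$ cut it into edges bordered, in order along $a^*$, by tiles $T_0,T_1,\dots,T_m$ (where $m\ge 0$, and $m=0$ means no arc hits $a^*$). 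Deleting $a^*$ removes exactly the relatively open subarcs of $a^*$ strictly between consecutive division points, so in $S\setminus D'$ the tiles $T,T_0,\dots,T_m$ fuse into a single region $\widehat T$, while every other tile survives unchanged as a region of $S\setminus D'$.

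The key step is to verify that every region of $S\setminus D'$ has connected boundary, and that this boundary lies in $D'$. For a surviving tile this is immediate: its boundary is unchanged, and it can meet the relative interior of $a^*$ only at some of the points $h_i$, which lie in $D'$. For $\widehat T$, write $u,w$ for the endpoints of $a^*$ and set $h_0=u$, $h_{m+1}=w$. Deleting the relative interior of $a^*$ from $\partial T$ leaves an arc $P$ from $u$ to $w$, and for each $i$ deleting the relative interior of $a^*$ from $\partial T_i$ leaves an arc $P_i$ from $h_i$ to $h_{i+1}$. One checks that $\partial\widehat T=P\cup P_0\cup\dots\cup P_m$; since consecutive arcs in this list share an endpoint, $\partial\widehat T$ is connected, and since it meets the relative interior of $a^*$ only at the points $h_i\in D'$, we get $\partial\widehat T\subseteq D'$.

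I would then finish with a two-colouring argument. First, the region-adjacency graph of $S\setminus D'$ is connected: otherwise $S$ would be the union of two nonempty closed sets meeting in only finitely many points, contradicting the fact that a sphere minus a finite set is connected. Suppose, for contradiction, that $D'=F_1\sqcup F_2$ with $F_1,F_2$ nonempty, closed and disjoint. The (connected) boundary of each region of $S\setminus D'$ lies in $D'$, hence entirely in $F_1$ or entirely in $F_2$; two adjacent regions share a boundary arc, so they must receive the same label, and by connectedness of the region-adjacency graph all regions receive the same label, say $F_1$. But $D'$ is precisely the union of the boundaries of the regions of $S\setminus D'$, so $D'\subseteq F_1$, contradicting $F_2\neq\emptyset$. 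Hence $D'$ is connected.

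The step I expect to be the main obstacle is the description of the fused region $\widehat T$ in the second paragraph: one must confirm that merging $T$ with $T_0,\dots,T_m$ yields a region whose boundary is a single connected set rather than a disjoint union of several pieces. This is exactly where convexity of the tiles (so that each $T_i$ meets $a^*$ in a single subarc and $a^*$ is a full edge of $T$) and axiom~A3 (so that $T$ is a single well-defined tile running along all of $a^*$) are used in an essential way.
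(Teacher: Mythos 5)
Your proof is correct, and while it analyzes the same decomposition as the paper --- the complement $S\setminus D'$ consists of the surviving tiles plus one ``fused'' region obtained by merging the single tile having $a^*$ as a full edge (via axiom~A3 and \cref{p:tileedge}) with the tiles on the other side of $a^*$ --- it closes the argument by a genuinely different mechanism. The paper reduces the claim to the statement that every component of $S\setminus D'$ is \emph{simply connected} (a duality between connectivity of a compact set on $S^2$ and simple connectivity of its complementary components, which it treats as known) and then exhibits an explicit deformation retraction of the fused component onto $a^*$, peeling off one convex tile at a time using the fact that $T\cap a^*$ is a single sub-arc. You instead prove the weaker-looking but sufficient statement that every component of $S\setminus D'$ has \emph{connected boundary} contained in $D'$, and then derive connectivity of $D'$ directly by an elementary separation argument: any splitting of $D'$ into two nonempty clopen pieces would force each region's boundary wholly into one piece, adjacent regions agree, the region-adjacency graph is connected (since a sphere minus finitely many points is connected), and $D'$ is the union of all region boundaries. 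This buys you a self-contained point-set argument that avoids the homotopy-theoretic equivalence the paper invokes without proof, at the cost of a somewhat longer bookkeeping step; both versions use convexity of tiles and one-sidedness of $a^*$ in exactly the same place, namely to show that the fused region's boundary is a single chain $P\cup P_0\cup\dots\cup P_m$ of arcs meeting consecutively at the endpoints of $a^*$ and the hit points $h_i$. The only point you should make explicit is the one your adjacency-graph step silently relies on: two non-adjacent regions have closures meeting in only finitely many points, which follows because the intersection of the closures of any two regions is a finite union of intersections of pairs of convex tiles, each of which is convex and hence either a nondegenerate arc (forcing adjacency) or at most a point.
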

\begin{proof}
Let $\mathcal D$ be an SOD, let $a\in\mathcal D$, and let $D$ be the union of all arcs of $\mathcal D$ other than $a$. Our claim is equivalent to the statement that every connected component of $S\setminus D$ is simply connected, where $S$ is the unit sphere. 

Most of the connected components of $S\setminus D$ are interiors of tiles of $\mathcal D$, which are spherically convex (\cref{p:06}) and therefore simply connected. The only exceptions are the tiles incident to the interior of $a$, which constitute a single connected component $C$ of $S\setminus D$. Precisely, $C$ is the union of $a$ and the interiors of all tiles of $\mathcal D$ that have an edge in $a$.

We will prove that $C$ is contractible, and therefore simply connected. In fact, we will show that $C$ deformation-retracts onto $a$, which is obviously contractible. Consider a tile $T$ of $\mathcal D$ such that $e\subseteq a$ is an edge of $T$. Since both $T$ and $a$ are spherically convex, their intersection is also spherically convex, hence a geodesic arc. It follows that $T\cap a= e$, and therefore $\mathring T\cup a$ deformation-retracts onto $a$, where $\mathring T$ is the interior of $T$. This induces a deformation retraction of $C$ onto $C\setminus \mathring T$. By repeating the same process for all tiles with an edge in $a$, we conclude that $C$ deformation-retracts onto $a$.
\end{proof}

\begin{corollary}\label{p:07b}
The union of all the arcs in an SOD is a connected set.
\end{corollary}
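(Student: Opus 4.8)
The plan is to obtain this as an immediate consequence of \cref{p:07a}. If the SOD $\mathcal D$ consists of a single arc, then the union of its arcs is a single geodesic arc, which is connected, and there is nothing more to do. Otherwise, I would fix an arbitrary arc $a\in\mathcal D$ and let $D$ be the union of all arcs of $\mathcal D$ other than $a$; by \cref{p:07a}, $D$ is connected. The union of all arcs of $\mathcal D$ is exactly $D\cup a$, and since $a$ is itself connected (being a geodesic arc), it suffices to show that $a\cap D\neq\emptyset$.

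To see this, I would invoke axiom~A2: the arc $a$ is blocked at one of its endpoints $p$ by some arc $b\in\mathcal D$. Because the ``blocks'' relation is only defined between \emph{non-collinear} arcs, necessarily $b\neq a$, so $b$ is one of the arcs whose union is $D$; moreover $p$ is an interior point of $b$, hence $p\in a\cap b\subseteq a\cap D$. Therefore $D\cup a$ is a union of two connected sets sharing the point $p$, and so it is connected.

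I do not anticipate any genuine obstacle here: the only points requiring a moment's care are the degenerate single-arc case and the trivial remark that an arc cannot block itself, which is already built into the non-collinearity condition in the definition of ``hits''/``blocks''.
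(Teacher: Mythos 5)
Your proof is correct and is exactly the deduction the paper intends: the paper's proof of this corollary is simply ``Immediate from \cref{p:07a}'', and your argument spells out why --- the remaining union $D$ is connected by \cref{p:07a}, and axiom~A2 guarantees the removed arc $a$ meets $D$ at a blocking point, so $D\cup a$ is connected. The extra care about the single-arc case and about an arc not blocking itself is harmless (indeed, A2 already rules out one-arc SODs), so nothing is missing.
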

\begin{proof}
Immediate from \cref{p:07a}.
\end{proof}

It is easy to see that \cref{p:07a} cannot be improved, as there are SODs that are disconnected by the removal of just two arcs. Indeed, starting from any SOD, it is always possible to add a new arc $a$, connecting two arcs $b$ and $c$, without violating the axioms of \cref{d:1}. Now, removing $b$ and $c$ from this new SOD isolates $a$ from the rest of the arcs. There are also irreducible SODs with this property, such as the one in \cref{fig:2conn}.

\begin{figure}[ht]
  \centering
  \includegraphics[scale=\figscale]{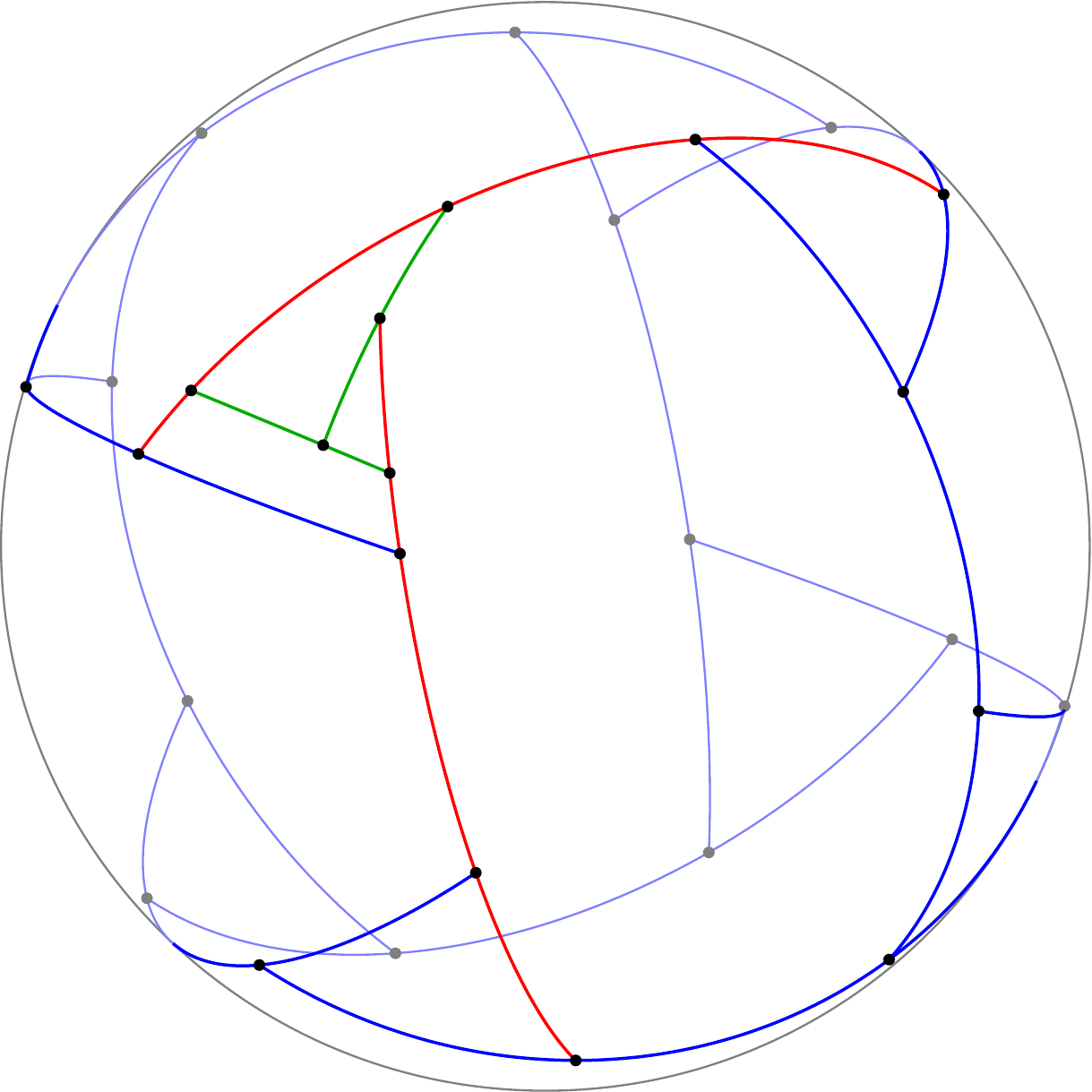}
  \caption{An irreducible SOD whose contact graph is not 3-connected. Removing the two arcs in red disconnects the arrangement into a green and a blue part.}
  \label{fig:2conn}
\end{figure}

There is a very clean relationship between the number of arcs in an SOD and the number of tiles: There are two more tiles than there are arcs.

\begin{proposition}\label{p:08}
An SOD with $n$ arcs partitions the unit sphere into $n+2$ tiles.
\end{proposition}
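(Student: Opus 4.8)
The plan is to set up an auxiliary graph $G$ embedded on the unit sphere $S$ and apply Euler's formula. I would take as the vertex set of $G$ the set of all points of $S$ that are an endpoint of at least one arc of $\mathcal D$; by axiom~A2, each such point is also in the relative interior of some arc. I would take as the edges of $G$ the maximal sub-arcs into which the arcs of $\mathcal D$ are subdivided by these vertices. By axiom~A1 and \cref{p:05}, two distinct arcs of $\mathcal D$ meet in at most one point and never cross transversally, so the union of the edges of $G$ is exactly the union $D$ of the arcs of $\mathcal D$, and $G$ is a genuine finite graph (finitely many vertices and edges, since $\mathcal D$ is finite). Moreover, the faces of $G$, i.e.\ the connected components of $S\setminus D$, are precisely the tiles of $\mathcal D$; by \cref{p:06} each of them is spherically convex, hence an open disk, so $G$ is cellularly embedded, and by \cref{p:07b} $G$ is connected. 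Euler's formula therefore gives $V-E+F=2$, where $V$, $E$, $F$ are the numbers of vertices, edges, and faces of $G$, and $F$ is exactly the number of tiles.

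Next I would analyse the local structure at a vertex $p$ of $G$. Exactly one arc of $\mathcal D$ contains $p$ in its relative interior: at least one by axiom~A2, and at most one because two such arcs would violate axiom~A1. Call it the through-arc at $p$; since $p$ is interior to it, it is split into two edges of $G$ meeting at $p$, so it contributes $2$ to $\deg_G(p)$. Let $k_p\ge 1$ be the number of arcs of $\mathcal D$ having $p$ as an endpoint (these are never through-arcs, and by axiom~A3 they all emanate from $p$ on the same side of the through-arc, although that fact is not needed for the count). Since a geodesic arc has two distinct endpoints, none of these arcs is a loop at $p$, so each contributes exactly $1$ to the degree. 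Hence $\deg_G(p)=k_p+2$.

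Finally, I would sum over all vertices. The handshake lemma gives $2E=\sum_p\deg_G(p)=2V+\sum_p k_p$, and $\sum_p k_p$ counts incidences between arcs and their endpoints, which equals $2n$ because each of the $n$ arcs has exactly two distinct endpoints; thus $E=V+n$. Substituting into $V-E+F=2$ yields $F=n+2$, i.e.\ the SOD partitions $S$ into $n+2$ tiles. The main things to be careful about — none of them deep — are the bookkeeping at the ``degenerate'' vertices where several arcs share an endpoint (handled uniformly by the parameter $k_p$) and the verification that $D$ is really a cellularly embedded connected graph on $S$, for which \cref{p:05,p:06,p:07b} are exactly what is needed.
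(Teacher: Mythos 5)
Your proposal is correct and takes essentially the same route as the paper: both apply Euler's formula to the spherical graph obtained by subdividing the $n$ arcs at the endpoints lying in their interiors, arriving at $E=V+n$ and hence $F=n+2$. The only differences are cosmetic — you derive the edge count via degrees and the handshake lemma rather than by directly counting sub-arcs, and you spell out the connectivity and cellular-embedding hypotheses (via \cref{p:05,p:06,p:07b}) that the paper leaves implicit.
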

\begin{proof}
Every endpoint of an arc of an SOD divides the arc it hits into two sub-arcs. The set of these sub-arcs induces a spherical drawing of a planar graph with $k$ vertices and $n+k$ edges (if two arcs share an endpoint, this counts as a single vertex). Each face of this drawing coincides with a tile of the SOD. By Euler's formula, the number of faces is $(n+k)-k+2=n+2$.
\end{proof}

\section{Swirls\label{sec:4}}
There is a curious similarity between SODs and continuous vector fields on a sphere. According to the hairy ball theorem, ``it is impossible to comb a hairy ball without creating cowlicks''. Similarly, it is impossible to construct an SOD without creating ``swirls'', as we shall see in this section.

\begin{definition}
A \emph{swirl} in an SOD is a cycle of arcs, each of which feeds into the next (and such that the last feeds into the first), going either all clockwise or all counterclockwise. The \emph{degree} of a swirl is the number of arcs constituting it.
\end{definition}

Since no two arcs in an SOD can hit each other (\cref{p:05}), the minimum degree for a swirl is~3. \cref{fig:example} shows an SOD with six clockwise swirls and six counterclockwise swirls, all of degree~3.

We will now prove some basic properties of swirls. Note that for the second time in this paper, after \cref{p:tileedge}, we will be using axiom~A3.

\begin{proposition}\label{p:eyeconv}
In an SOD $\mathcal D$, let $\mathcal S$ be a swirl of degree $k$.
\begin{itemize}
\item The union of the $k$ arcs of $\mathcal S$ separates the unit sphere in two regions, exactly one of which is spherically convex; this region is a spherical $k$-gon called the \emph{eye} $E$ of $\mathcal S$.
\item The only points of intersection between pairs of arcs of $\mathcal S$ are the vertices of $E$.
\item The tiles of $\mathcal D$ adjacent to $E$ are exactly $k$; any two such tiles are either disjoint or intersect only along a single arc of $\mathcal S$.
\end{itemize}
\end{proposition}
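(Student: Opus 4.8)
The plan is to realise the eye explicitly and then to read off the remaining claims from its convexity together with \cref{p:tileedge}. Label the arcs of $\mathcal S$ cyclically as $a_1,\dots,a_k$ (indices mod~$k$, and distinct) so that $a_i$ feeds into $a_{i+1}$ at a point $p_i$; thus $p_i$ is an endpoint of $a_i$ lying in the relative interior of $a_{i+1}$, and we may assume the swirl is clockwise. Let $b_i\subseteq a_i$ be the sub-arc joining $p_{i-1}$ to $p_i$, and set $\gamma:=b_1\cup\dots\cup b_k$, a closed curve with vertices $p_1,\dots,p_k$. I would first note that $\gamma$ is a \emph{simple} closed curve, using only axiom~A1 and \cref{p:04}: the $b_i$ are pairwise internally disjoint, being sub-arcs of the internally disjoint $a_i$; the $p_i$ are pairwise distinct, since $p_i=p_j$ would put this point in the interiors of both $a_{i+1}$ and $a_{j+1}$, so $a_{i+1}=a_{j+1}$ by~A1 and $i=j$; and no vertex $p_\ell$ lies in the interior of a non-incident $b_j$, since then $p_\ell$ would be interior to both $a_j$ and $a_{\ell+1}$, forcing $j=\ell+1$, whereas $p_\ell$ is an \emph{endpoint} of $b_{\ell+1}$. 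Hence $\gamma$ has no branch points and no self-touchings, so by the Jordan curve theorem it bounds two open discs.

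Next I would pick out the convex side. For each $i$ let $C_i$ be the great circle carrying $a_i$ (note $C_i\neq C_{i+1}$, these arcs being non-collinear), and let $H_i$ be the closed hemisphere bounded by $C_i$ on the side from which $a_{i-1}$ feeds into $a_i$ — well defined by axiom~A3, since \emph{all} arcs hitting $a_i$ do so from that side. Put $E:=\bigcap_{i=1}^k H_i$. As a finite intersection of closed hemispheres, $E$ is spherically convex provided it contains no pair of antipodal points, which it does not: if $\{z,-z\}\subseteq E$ then $z,-z\in C_i$ for every $i$, hence $p_i\in C_i\cap C_{i+1}=\{z,-z\}$ for every $i$, and then $a_i$ would contain the antipodal pair $\{p_{i-1},p_i\}=\{z,-z\}$, contradicting that $a_i$ is shorter than a great semicircle. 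The crux is then the identity $\partial E=\gamma$: one checks that every $b_i$ lies in every $H_j$ (so $\gamma\subseteq\partial E$) and that $E\cap C_i$ extends no farther along $C_i$ than $p_{i-1}$ and $p_i$, because past $p_i$ the circle $C_i$ at once leaves $H_{i+1}$ and past $p_{i-1}$ it at once leaves $H_{i-1}$; \emph{this is where the clockwise hypothesis enters}, fixing on which side of $C_{i-1}$ the arc $b_i$ emanates from $p_{i-1}$. Since $\partial E\subseteq\bigcup_i C_i$, this gives $\partial E=\gamma$, so $E$ is a spherically convex spherical $k$-gon with edges $b_1,\dots,b_k$ and vertices $p_1,\dots,p_k$; the other disc bounded by $\gamma$ has interior angle $2\pi-\alpha_i>\pi$ at each $p_i$ (with $\alpha_i<\pi$ the corresponding angle of $E$, and $\alpha_i\neq\pi$ because $a_{i-1},a_i$ are non-collinear), so it is not spherically convex. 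Finally, the ``tails'' $a_i\setminus b_i$ dangle from the vertices of $\gamma$ into this non-convex disc and — again by the clockwise hypothesis — never return to $\gamma$ or meet one another except at the $p_\ell$; hence $S\setminus\bigcup_i a_i$ still has exactly two components, the interior of $E$ and the non-convex disc with the tails removed. This proves the first bullet, and the tail analysis also yields the second: the only pairwise intersection points of arcs of $\mathcal S$ are the $p_i$, with consecutive arcs meeting in exactly $\{p_i\}$ by \cref{p:05}.

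For the third bullet, by \cref{p:tileedge} each $a_i$ is an edge of a tile $T_i$, which lies on the side of $a_i$ opposite the arcs hitting $a_i$, hence opposite $E$; since the whole of $a_i$ is a single edge of $T_i$, the only tile bordering the edge $b_i$ of $E$ from outside is $T_i$, so exactly $k$ tiles border $E$ along an edge, namely $T_1,\dots,T_k$. They are distinct: $T_i=T_j$ with $i\neq j$ would make one convex tile share two distinct boundary arcs with the convex polygon $E$, which by \cref{p:06} and \cref{p:tileanti} forces those arcs onto a common great circle, hence onto one edge of $E$. And $T_i$ and $T_{i+1}$ lie on opposite sides of $a_{i+1}\in\mathcal S$ near $p_i$, so their intersection is an arc of $a_{i+1}$, while for non-consecutive $i,j$ a common point of $T_i$ and $T_j$ would produce, via convexity of $T_i,T_j,E$ and \cref{p:tileanti}, the same degeneracy, so $T_i\cap T_j=\emptyset$.

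The step I expect to be the main obstacle is the identity $\partial E=\gamma$ — the passage from the purely \emph{local} fact that the configuration at every $p_i$ has the same chirality (granted by axiom~A3 and the clockwise hypothesis) to the \emph{global} statement that the $b_i$ genuinely bound $\bigcap_i H_i$ — together with the related claim that the tails do not reconnect. I expect both are cleanest to settle by viewing $\gamma$ as a closed clockwise monotonic walk around an interior point of $E$ and using the ``steady progress'' property from the proof of \cref{p:semi} to conclude the walk winds exactly once; alternatively, by a minimality argument, since a returning tail or an extra crossing would cut off a smaller swirl.
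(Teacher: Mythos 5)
Your architecture is sound and in places more careful than the paper's: you establish that $\gamma$ is a simple closed curve from axiom~A1 and \cref{p:04}, you exclude antipodal pairs from $E$, and you realize the eye as $\bigcap_i H_i$ with each hemisphere $H_i$ singled out by axiom~A3 (the paper instead takes $E$ directly as the region traced out by the cycle). However, the two steps you yourself flag as ``the main obstacle'' are exactly where the content of the first two bullets lives, and you leave both as sketches. For $\partial E=\gamma$, the local same-side information only gives you $a_j\subseteq H_i$ for $j$ adjacent to $i$ (this part is clean: $a_{i-1}$ emanates from a point of $C_i$ into $\mathring H_i$ and cannot re-cross $C_i$ without containing an antipodal pair); for non-adjacent $j$ nothing local forces $b_j\subseteq H_i$, and neither of your two proposed repairs (winding of a monotonic walk, or minimality) is executed. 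To be fair, the paper is equally terse at this exact point --- it simply asserts that turning the same way at every vertex traces out a convex $k$-gon --- so you are not worse off than the source, but a complete proof must close this.

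Where you are genuinely missing an argument the paper does supply is the claim that the tails $a_i\setminus b_i$ never reconnect and that non-consecutive arcs of $\mathcal S$ are disjoint; your ``by the clockwise hypothesis'' is not a proof. The paper's argument: granting convexity of $E$, the eye lies in the spherical lune $L$ bounded by the great circles $C_i$ and $C_j$, so $a_i\cap a_j\subseteq C_i\cap C_j=\{p,p'\}$, the two antipodal vertices of $L$; each arc, being shorter than a great semicircle, can reach at most one of them, and the swirl's orientation sends $a_i$ toward one vertex of $L$ and $a_j$ toward the other, so they cannot meet. You should expand your tail analysis into exactly this. The third bullet is fine: your distinctness argument for the tiles $T_i$ (a convex tile cannot share two non-collinear boundary arcs with the convex $E$ without its interior meeting $\mathring E$, by \cref{p:06} and \cref{p:tileanti}) differs from the paper's, which instead confines each $T_i$ to one of $k$ disjoint lunes obtained by extending the swirl arcs to the antipodes of the eye's vertices; both routes work given \cref{p:tileedge}.
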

\begin{proof}
Let $\mathcal S=(a_0, a_1, \dots, a_{k-1})$, and let $x_i$ be the unique endpoint of $a_{i-1}$ lying in the interior of $a_i$, with $0\leq i< k$ (indices are taken modulo $k$), as shown in \cref{fig:swirlint}. By definition of swirl, each arc of $\mathcal S$ feeds into the next forming convex angles; moreover, turning in the same direction (either clockwise or counterclockwise) at every $x_i$ yields a cycle of arcs. We conclude that following the $a_i$'s in order traces out the boundary of a spherically convex $k$-gon with vertices $x_0$, $x_1$, \dots, $x_{k-1}$. Let $E$ be such a convex $k$-gon (drawn in yellow in \cref{fig:swirlint}).

We already know that any two consecutive arcs of $\mathcal S$ intersect at a vertex of $E$ and only there (\cref{p:05}). We will now prove that non-consecutive arcs $a_i$ and $a_j$ of $\mathcal S$ are disjoint. Since $E$ is spherically convex, it completely lies in one of the two hemispheres bounded by $a_i$, and in one of the two hemispheres bounded by $a_j$. Thus, $E$ lies in a spherical lune $L$ determined by $a$ and $b$, as shown in \cref{fig:simple}. Clearly, $a_i$ and $a_j$ can intersect only at the vertices $p$ and $p'$ of $L$, which are antipodal points. Note that, because of the way the arcs of a swirl are oriented, $a_i$ may be incident to a vertex of $L$, say $p$, but not to the other vertex $p'$. Likewise, $a_j$ may be incident to $p'$ but not to $p$, and thus $a_i$ and $a_j$ cannot intersect.

Since the arcs of $\mathcal S$ have no intersections away from of the boundary of $E$, the region of sphere external to $E$ is not disconnected by $\mathcal S$, and is therefore a unique non-convex connected component.

It remains to prove that the tiles of $\mathcal D$ adjacent to $E$ are exactly $k$ and may only intersect each other along arcs of $\mathcal S$. Due to axiom~A3, no arc of $\mathcal D$ lying outside of $E$ can hit the boundary of $E$ away from its vertices. Thus, there can be at most $k$ tiles adjacent to $E$, and indeed these are the $k$ tiles that have an $a_i$ as an edge, for some $0\leq i<k$ (recall that every arc coincides with an edge of a tile, due to \cref{p:tileedge}).

Let us define $x_i'$ as the point antipodal to $x_i$, with $0\leq i< k$, and extend each $a_i$ in the direction opposite to $x_{i+1}$ until it reaches $x_i'$, as shown in \cref{fig:swirlint}. Our proof that non-adjacent swirl arcs are disjoint also implies that these arc extensions partition the sphere into exactly $k+2$ regions: $E$, the polygon $x_0'x_1'\dots x_{k-1}'$ (which is congruent to $E$ and antipodal to it), and $k$ spherical lunes, each of which entirely contains an arc $a_i$ and has $x_{i+1}$ and $x_{i+1}'$ as vertices.

Note that the tile of $\mathcal D$ adjacent to $E$ that has $a_i$ as an edge must be contained in one of these $k$ spherical lunes, namely the one whose vertices are $x_{i+1}$ and $x_{i+1}'$. Thus, the pairs of tiles corresponding to non-consecutive $a_i$'s must be disjoint, because their respective lunes are disjoint. On the other hand, any two tiles corresponding to adjacent lunes intersect along an arc $a_i$. Since tiles are spherically convex, their intersection must be a sub-arc of $a_i$.
\end{proof}

\begin{figure}[ht]
  \centering
  \includegraphics[scale=\figscale]{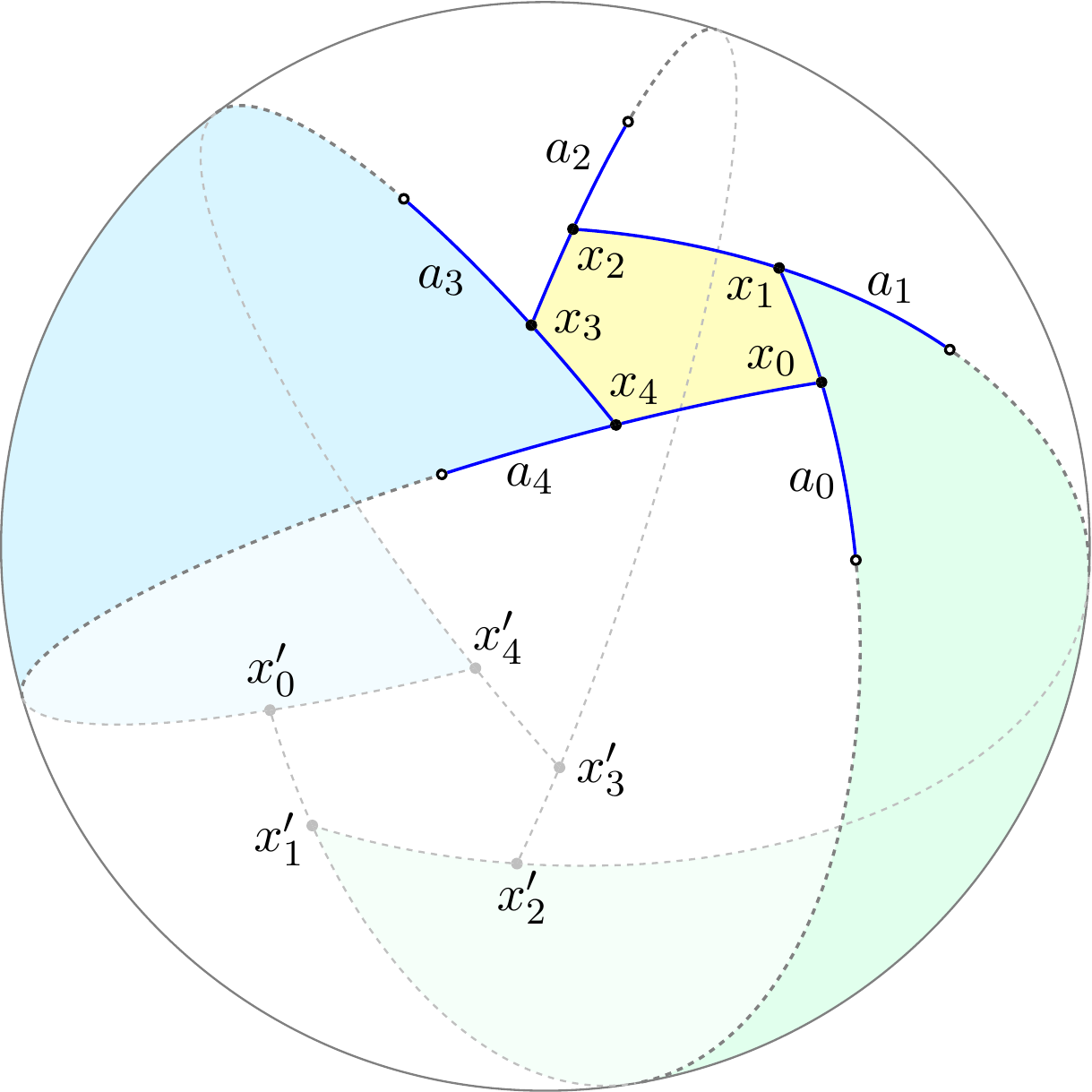}
  \caption{A counterclockwise swirl with its eye in yellow. The arcs constituting the swirl cannot intersect outside of the eye's vertices, and the two colored spherical lunes are disjoint.}
  \label{fig:swirlint}
\end{figure}

\begin{figure}[ht]
  \centering
  \includegraphics[scale=\figscale]{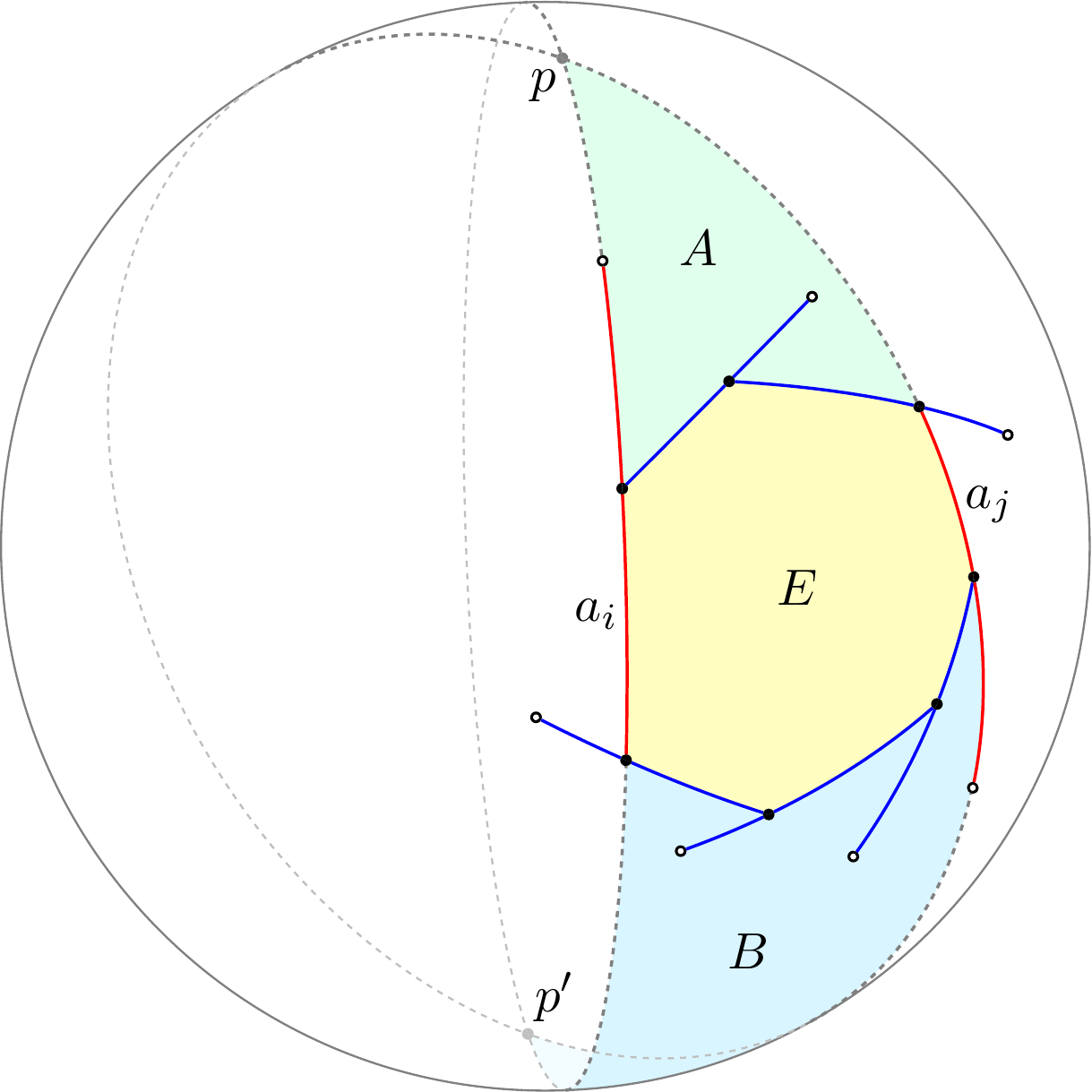}
  \caption{Any two non-consecutive arcs of a swirl are disjoint.}
  \label{fig:simple}
\end{figure}

Observe that, in an irreducible SOD, the eye of each swirl coincides with a single tile; in general, the eye of a swirl is a union of tiles, as there may be inner arcs.

\begin{proposition}\label{p:eyebound}
In an SOD, if the eyes of two distinct swirls have intersecting interiors, then their boundaries are disjoint.
\end{proposition}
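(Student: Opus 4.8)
The plan is to prove the contrapositive: assuming $\partial E_1\cap\partial E_2\neq\emptyset$ for the eyes $E_1,E_2$ of distinct swirls $\mathcal S_1,\mathcal S_2$, I would show that the interiors $\mathring E_1$ and $\mathring E_2$ are disjoint.

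First I would note that every point of $\partial E_i$ lies in the relative interior of some arc of $\mathcal S_i$: the eye boundary is the cyclic concatenation of the sub-arcs $[x_j,x_{j+1}]$ of the swirl's arcs, and each vertex $x_{j+1}$ is interior to the arc it feeds into. So a common boundary point $w$ lies in the relative interior of an arc $a\in\mathcal S_1$ and of an arc $a'\in\mathcal S_2$; since $a,a'\in\mathcal D$, axiom~A1 forces $a=a'$, hence $\mathcal S_1$ and $\mathcal S_2$ share an arc $a$ (with great circle $\gamma$ and endpoints $p,q$). Next I would locate the two eyes with respect to $a$. Along any of its arcs, the eye of a swirl lies on the side from which that arc is hit by its predecessor in the swirl, and by axiom~A3 this side of $a$ does not depend on the swirl; hence $E_1$ and $E_2$ lie in the same closed hemisphere $\bar H$ bounded by $\gamma$ (a convex spherical polygon with an edge on $\gamma$ stays on one side of $\gamma$). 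Moreover a swirl is determined by any one of its arcs together with the endpoint at which it leaves it: the successor arc is forced by \cref{p:04}, and the direction in which the swirl then proceeds along the successor is forced as well, since of the two directions only one keeps the eye on the hit side (axiom~A3 again). As $\mathcal S_1\neq\mathcal S_2$, the swirls leave $a$ at different endpoints, say $p$ and $q$; then $E_i\cap\gamma$ is a sub-arc of $a$ reaching up to that endpoint, so $p\in E_1\setminus E_2$, $q\in E_2\setminus E_1$, and neither eye contains the other.

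For the contradiction, suppose $\mathring E_1\cap\mathring E_2\neq\emptyset$. Since $E_1,E_2\subseteq\bar H$ are spherically convex, $E_1\cap E_2$ is convex with non-empty interior, and --- because neither eye contains the other --- its boundary contains both an arc of $\partial E_1$ and an arc of $\partial E_2$, meeting at some point $v$. At such a $v$ the curve $\partial E_1$ passes from inside $E_2$ to outside $E_2$, i.e.\ it crosses $\partial E_2$. But $v\in\partial E_1\cap\partial E_2$, so by the first step $v$ is interior to a common arc $a''\in\mathcal S_1\cap\mathcal S_2$, and near $v$ both boundaries run along $a''$; hence they coincide near $v$ and cannot cross --- a contradiction, so $\mathring E_1\cap\mathring E_2=\emptyset$.

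The step I expect to be the main obstacle is the last one: I would have to argue rigorously that $\partial E_1$ and $\partial E_2$ genuinely \emph{cross} at the junction point $v$ rather than merely touching or running alongside each other. This requires handling the finitely many vertices of the eyes (at such a vertex the extra boundary branch emanates into the hit side of $a''$ by axiom~A3, so it still cannot create a crossing) and the case in which $\partial E_1$ and $\partial E_2$ share an entire sub-arc near $v$ (then one shows $\mathcal S_1$ and $\mathcal S_2$ leave that sub-arc at the same endpoint, forcing $\mathcal S_1=\mathcal S_2$ via the determinacy established in the second step). In all of this, axiom~A1 --- two arcs of $\mathcal D$ never cross transversally --- is what makes the argument go through. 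A small preliminary I would also record is that an eye contains no two antipodal points, which underwrites the hemisphere claim above and follows from \cref{p:eyeconv}.
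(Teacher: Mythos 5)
Your reduction is sound up to the last step: by axiom~A1 the boundaries of the two eyes can only meet on a shared arc $a$; by axiom~A3 and convexity both eyes lie in the same closed hemisphere bounded by the great circle of $a$; a swirl is indeed determined by one of its arcs together with the exit endpoint, so the two swirls exit $a$ at opposite endpoints $p$ and $q$; and neither eye contains the other. The gap is in the claimed contradiction. Writing $x_1,x_2$ for the interior points of $a$ at which the predecessors of $a$ in $\mathcal S_1,\mathcal S_2$ hit it, you have $E_1\cap a=[x_1,p]$ and $E_2\cap a=[q,x_2]$, and nothing you have established excludes the order $q,x_1,x_2,p$ along $a$. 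In that configuration the two edges overlap in $[x_1,x_2]$, the interiors of the eyes intersect just off that overlap on the hit side of $a$, and $\partial E_1$ passes from $\mathring E_2$ to the exterior of $E_2$ \emph{without any pointwise transversal crossing}: it arrives at $x_1$ along the predecessor arc of $a$ in $\mathcal S_1$, which enters $\mathring E_2$ (since $x_1$ is interior to $E_2$'s edge on $a$ and that arc emanates into the hit side of $a$, which is the side where $E_2$ lies), then runs along the shared segment $[x_1,x_2]\subseteq\partial E_2$, and finally leaves $E_2$ along $(x_2,p]$. Both of your proposed patches fail exactly here: the ``extra branch emanating into the hit side'' is precisely a branch into the interior of the other eye, so it does initiate a crossing; and sharing a sub-arc does not force the two swirls to exit at the same endpoint --- by your own second step they exit at opposite endpoints, while their edges on $a$ may still overlap in the middle. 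Excluding this overlapping configuration is the entire content of the proposition, and it cannot be done by a local analysis at a single contact point, since locally the forbidden picture is perfectly consistent with axioms A1--A3.

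The paper's proof is correspondingly global: it first notes that sharing an arc forces the two swirls to be discordant (a concordant pair sharing an arc would coincide), and then traces the full perimeter of one eye, showing that a counterclockwise perimeter walk, once it has exited the clockwise eye, is ``pushed back'' at every subsequent encounter with that eye's boundary and can never re-enter its interior --- contradicting the fact that part of the walk lies inside. To repair your argument you would need some such monodromy-type statement about the entire boundary cycle; the determinacy and same-hemisphere observations you set up are useful ingredients for it, but they do not by themselves rule out the overlapping-edge case.
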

\begin{proof}
Let $\mathcal S=(a_0, a_1, \dots, a_{k-1})$ and $\mathcal S'=(b_0, b_1, \dots, b_{k'-1})$ be two distinct swirls with eyes $E$ and $E'$, respectively. Note that, although the two swirls may share some arcs, not all arcs of $\mathcal S$ can be arcs of $\mathcal S'$, and vice versa (this is a simple consequence of \cref{p:05} and the definition of swirl).

Assume for a contradiction that $E$ and $E'$ have intersecting interiors and intersecting boundaries. Then, as we trace out the perimeter of $E'$ following the arcs of $\mathcal S'$ in order, we eventually reach a $b_i$ lying entirely in $E$ that hits an $a_j$, as shown in \cref{fig:swirlout}. Thus, $b_{i+1}=a_j$. This immediately implies that $\mathcal S$ and $\mathcal S'$ cannot be concordant, e.g., both clockwise. For otherwise, continuing to follow the arcs of $\mathcal S'$ clockwise we would reach $b_{i+2}=a_{j+1}$, $b_{i+3}=a_{j+2}$, etc., contradicting the fact that $\mathcal S$ and $\mathcal S'$ are distinct.

Hence, we may assume without loss of generality that $\mathcal S$ is a clockwise swirl and $\mathcal S'$ is a counterclockwise swirl. Now, continuing to trace out the perimeter of $E'$ following the arcs of $\mathcal S'$ counterclockwise from $b_{i+1}=a_j$ immediately leads outside of $E$ (see the green arrow starting at $b_i$ in \cref{fig:swirlout}).

We claim that once our walk around the perimeter of $E'$ has exited $E$, it can no longer reach its interior, contradicting the fact that $b_i$ is internal to $E$. Indeed, as soon as a counterclockwise walk reaches the boundary of $E$ at a certain $a_{i'}$, whether coming from the side of $a_{i'-1}$ or from the opposite side,\footnote{Note that the latter case cannot occur in an SOD, because it violates axiom~A3. We chose to present a more general proof which does not use axiom~A3, at the cost of slightly complicating the analysis.} it is immediately ``pushed back'' outside $E$, as illustrated by the green arrows in \cref{fig:swirlout}.
\end{proof}

\begin{figure}[ht]
  \centering
  \includegraphics[scale=\figscale]{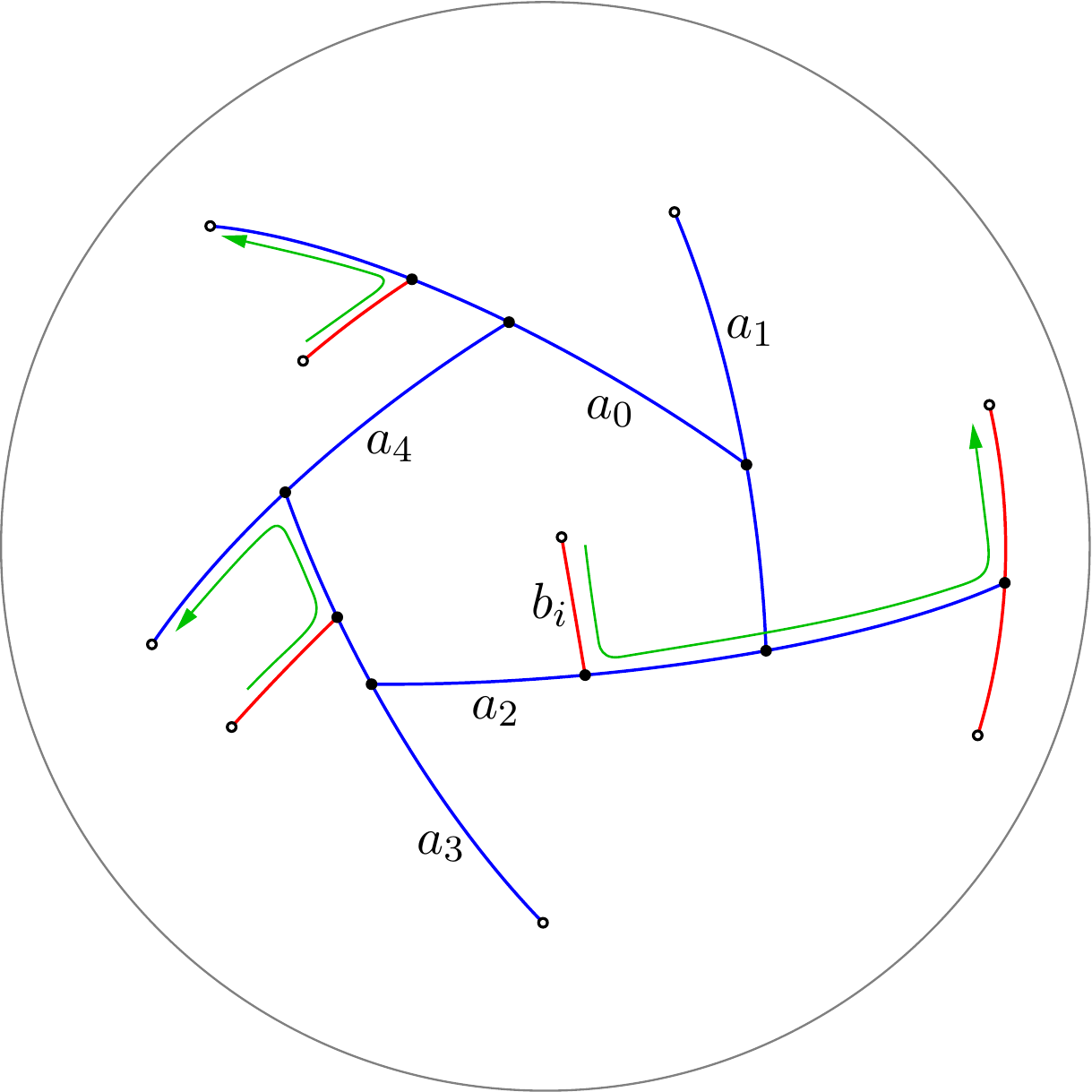}
  \caption{No two distinct swirls can have intersecting interiors and intersecting boundaries.}
  \label{fig:swirlout}
\end{figure}

\cref{p:eyebound} implies that any two eyes in an SOD are either internally disjoint or one is strictly contained in the other. Observe that, if we exclude ``degenerate'' SODs in which some arcs may share an endpoint, the statement of \cref{p:eyebound} is strengthened to ``distinct eyes have disjoint boundaries''. The inclusion of degenerate SODs, however, implies that some eyes may be internally disjoint but have a common vertex.

We will now study the combinatorics of swirls within an SOD by analyzing a structure called ``swirl graph''.

\begin{definition}
The \emph{swirl graph} of an SOD $\mathcal D$ is the undirected multigraph on the set of swirls of $\mathcal D$ having an edge between two swirls for every arc in $\mathcal D$ shared by the two swirls.
\end{definition}

\begin{figure}[ht]
  \centering
  \includegraphics[scale=\figscale]{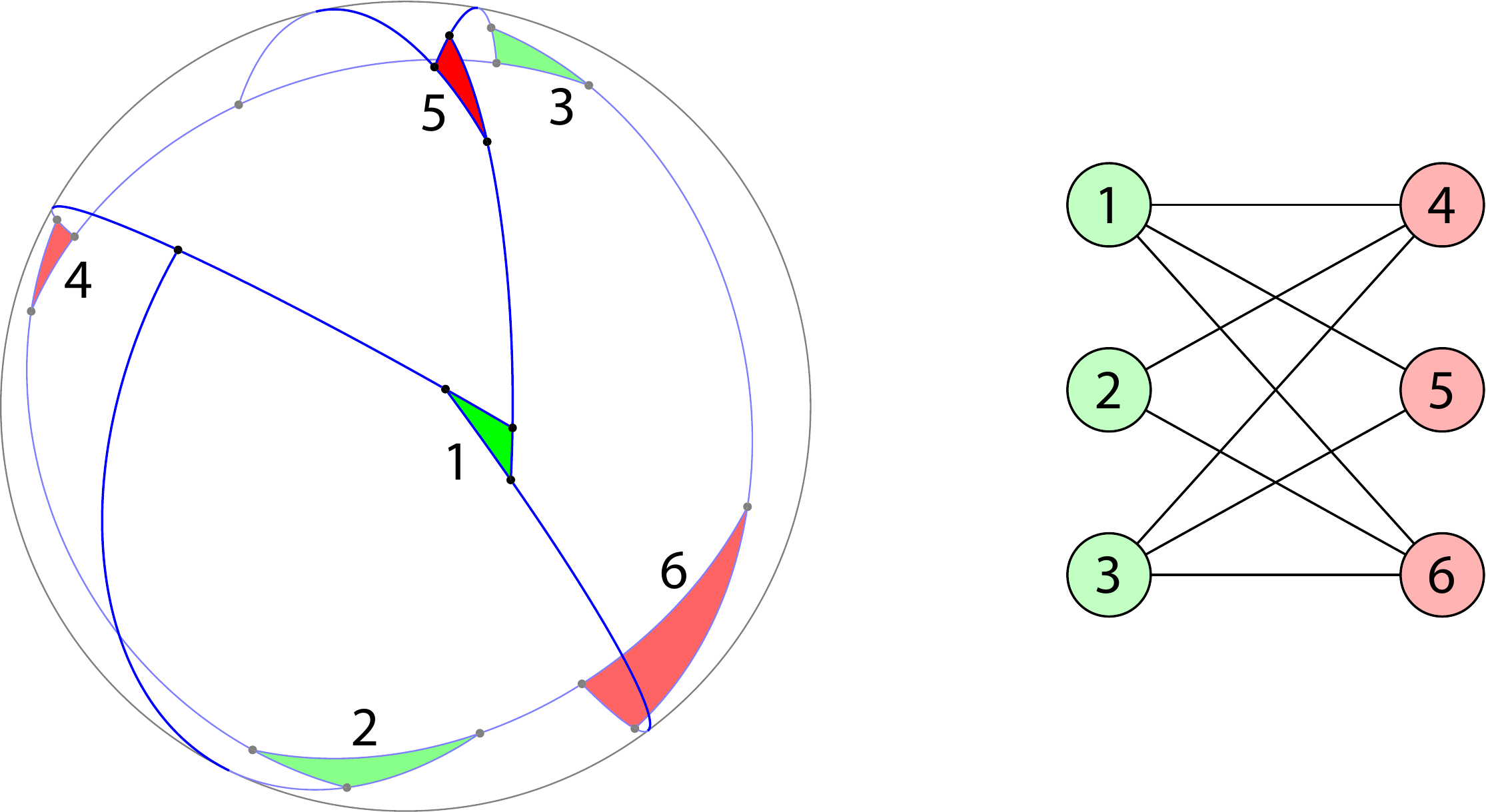}
  \caption{An SOD and its swirl graph.}
  \label{fig:sg}
\end{figure}

In \cref{fig:sg}, the eyes of clockwise swirls are colored green, and the eyes of counterclockwise swirls are colored red. Observe that the swirl graph of this SOD is simple and bipartite; this is true in general, as the next theorem shows.

\begin{theorem}\label{t:graph}
The swirl graph of any SOD is a simple planar bipartite graph with non-empty partite sets.
\end{theorem}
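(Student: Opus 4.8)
The plan is to use the partition of the swirls of $\mathcal D$ into \emph{clockwise} and \emph{counterclockwise} ones (the colouring seen in \cref{fig:sg}) as the bipartition, and to verify in turn that both classes are non-empty, that the swirl graph is bipartite with respect to this partition, that it has no loops or parallel edges, and that it is planar. Throughout I would lean on \cref{p:eyeconv,p:eyebound,p:tileedge} together with axiom~A3, as the text hints.

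\emph{Non-empty partite sets.} First I would show that every SOD contains at least one clockwise swirl and at least one counterclockwise swirl. Start from any arc of $\mathcal D$, traverse it toward one endpoint, and at each hit point encountered turn onto the blocking arc by making the clockwise turn among the two available choices; this is well defined because a hitting arc is never collinear with the arc it hits, and by axiom~A1 one can never cross an arc, only turn onto it. The walk is forward-deterministic, hence eventually periodic, and its periodic part is a cycle at which every turn is clockwise, i.e.\ a clockwise swirl; the analogous counterclockwise walk yields a counterclockwise swirl. Since $\mathcal D\neq\emptyset$, both classes are non-empty.

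\emph{Bipartiteness.} This is the heart of the argument: I would prove that two \emph{distinct} swirls $\mathcal S,\mathcal S'$ sharing an arc $a$ must be of opposite sense. The ingredients: (i) distinct swirls have distinct eyes, since a common eye would have both intersecting interiors and intersecting boundaries, contradicting \cref{p:eyebound}; (ii) the eye of \emph{any} swirl through $a$ lies on a fixed side of $a$ — the arc preceding $a$ in such a swirl contributes to the boundary of the (spherically convex) eye a final sub-arc ending on $a$, hence lying in the closed hemisphere bounded by the great circle of $a$ on the eye's side, and, being non-collinear with $a$, it meets $a$ from that side, so by axiom~A3 the eyes of $\mathcal S$ and $\mathcal S'$ lie on one and the same side of $a$; (iii) $\mathcal S$ and $\mathcal S'$ traverse $a$ in opposite directions, for if they fed $a$ into its successor through the same endpoint, then the two eyes' boundary sub-arcs along $a$ would overlap near that endpoint and, the eyes being on the same side of $a$, their interiors would overlap there, again contradicting \cref{p:eyebound}. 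Combining (ii) and (iii): the eye of $\mathcal S$ lies on, say, the right of the boundary traversal of $a$ prescribed by $\mathcal S$ (so $\mathcal S$ is clockwise), while the eye of $\mathcal S'$ lies on the same physical side but the traversal of $a$ is reversed, hence on the left, so $\mathcal S'$ is counterclockwise. Consequently no arc lies in two swirls of the same sense, so every arc lies in at most two swirls; in particular the swirl graph has no loops, and each of its edges arises from an arc lying in exactly one clockwise and one counterclockwise swirl.

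\emph{Simplicity.} It remains to rule out parallel edges, i.e.\ to show two swirls share at most one arc. Suppose a clockwise swirl $\mathcal S$ (eye $E$) and a counterclockwise swirl $\mathcal T$ (eye $F$) shared arcs $a\neq b$. By the previous paragraph both eyes lie on the inner side of $a$ and of $b$, and they are internally disjoint by \cref{p:eyebound}: nesting is impossible, because $E$'s edge on $a$ equals the intersection of $\bar E$ with the great circle of $a$, which under nesting would contain $F$'s edge on $a$, forcing the two boundaries to meet while the interiors intersect. By \cref{p:tileedge} let $T_a,T_b$ be the single tiles bordering $a,b$ from the outer side; they are distinct (were $T_a=T_b$, then $E$ and this common tile would lie in antipodal lunes cut out by the great circles of $a$ and $b$, hence meet in at most two points, yet they share the edge $e_a$), each is incident to both $E$ and $F$, and \cref{p:eyeconv} controls how $T_a$ and $T_b$ can meet. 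I expect a short case analysis of these adjacencies, using the convexity of $E$ and $F$, to yield a contradiction; this bookkeeping is the fiddly part.

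\emph{Planarity — the main obstacle.} For the edge arising from an arc $a$ — lying, as noted, in a unique clockwise swirl $\mathcal S^+$ (eye $E^+$) and a unique counterclockwise swirl $\mathcal S^-$ (eye $E^-$), both on the inner side of $a$ — I would route a curve from an interior point of $E^+$, across $a$ into the unique tile $T_a$ bordering $a$ from the outer side, and then back across $a$ into $E^-$ to an interior point there; here $T_a$ is incident to both $E^+$ and $E^-$ precisely because the whole outer side of $a$ belongs to $T_a$. The task is to choose the representative points and route these curves so that they are pairwise disjoint — inside each eye or tile a curve should be kept hugging the relevant arc, and one must manage the interference caused by eyes nested inside other eyes. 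Equivalently, one must show that the rotation system on the swirl graph given by the cyclic order of arcs around each swirl embeds in the sphere, which by Euler's formula amounts to verifying that its number of faces equals $2$ minus the number of swirls plus the number of edges, a count that should match a feature of the SOD in the spirit of \cref{p:08}. Pinning down those faces and checking the count is the step I expect to be hardest.
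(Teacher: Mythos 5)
Your treatment of the non-empty partite sets and of bipartiteness matches the paper's: the deterministic clockwise/counterclockwise walk produces one swirl of each sense, and two distinct swirls sharing an arc must be discordant (the paper phrases this more tersely via axiom~A3 --- concordant swirls through the same arc would force hits on that arc from both sides --- but your eye-based derivation of the same fact is sound). The genuine gap is that the two remaining claims are announced rather than proved. For simplicity you assemble the right objects --- both eyes lie on the inner side of each shared arc, hence inside the lune $L$ bounded by the two great circles, and the eyes are internally disjoint --- but you then defer the contradiction to ``a short case analysis'' involving the outer tiles $T_a,T_b$, which is not where the contradiction lives. The paper's closing step is a separation argument entirely inside $L$: the convex eye $E$ splits $L$ into two regions $A$ and $B$ such that the interior points of $a_i$ on $\partial E$ border only $A$ and those of $a_j$ border only $B$; since $\partial E'$ joins $a_i$ to $a_j$ it must meet $\partial E$, so by \cref{p:eyebound} the interiors of $E$ and $E'$ are disjoint, forcing $E'$ to lie entirely in $A$ (as it abuts $a_i$) and simultaneously entirely in $B$ (as it abuts $a_j$) --- a contradiction. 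Without this (or an equivalent) step the simplicity argument is incomplete, and it is the substantive part of the theorem.

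Conversely, you have inverted the difficulty on planarity: the paper disposes of it in one sentence (``the swirl graph is spherical, hence planar''), i.e., the natural drawing on the sphere --- a vertex in each eye, and for each shared arc a connecting curve passing through the single tile on the outer side of that arc guaranteed by \cref{p:tileedge} --- is taken as an embedding, whereas you declare the face-count verification ``the step I expect to be hardest'' and leave it open. Your routing plan is essentially the right picture, but as written both simplicity and planarity end in unexecuted case analyses, so the proof is not complete.
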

\begin{proof}
The swirl graph is spherical, hence planar. It is bipartite, where the partite sets correspond to clockwise and counterclockwise swirls, respectively. Indeed, if the same arc is shared by two concordant swirls (say, clockwise), then it is hit by arcs from both sides, violating axiom~A3.

\begin{figure}[ht]
  \centering
  \includegraphics[scale=\figscale]{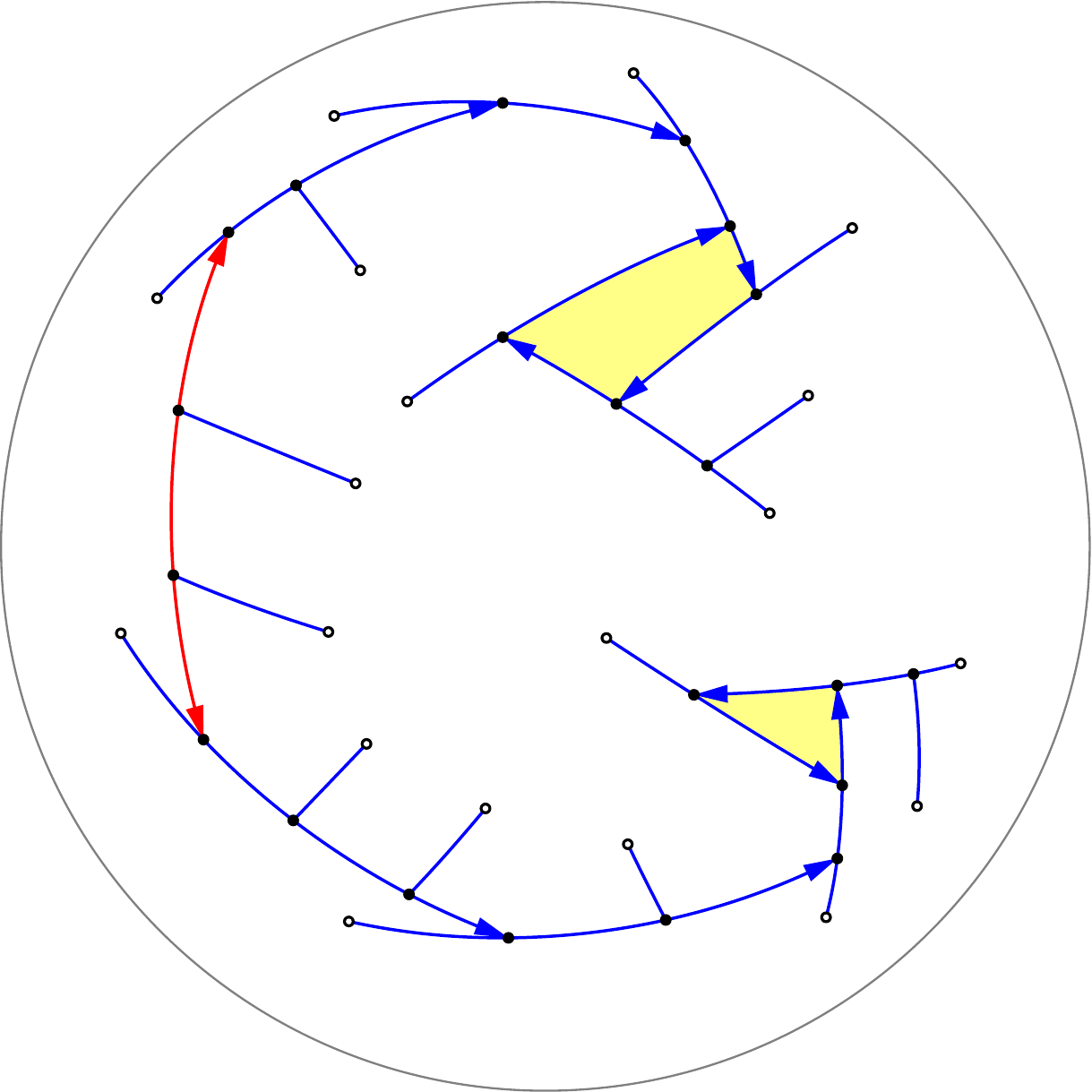}
  \caption{Finding swirls in an SOD.}
  \label{fig:s02}
\end{figure}

\cref{fig:s02} shows how to find a clockwise and a counterclockwise swirl in any SOD. For a clockwise swirl, start from any arc and follow it in any direction until it hits another arc. Then turn clockwise and follow this arc until it hits another arc, and so on. The sequence of arcs encountered is eventually periodic, and the period identifies a clockwise swirl. A counterclockwise swirl is found in a similar way.

To prove that the swirl graph is simple, assume for a contradiction that a swirl $\mathcal S$ shares two arcs $a_i$ and $a_j$ with another swirl $\mathcal S'$. Let $E$ and $E'$ be the eyes of $\mathcal S$ and $\mathcal S'$, respectively; observe that $a_i$ and $a_j$ are incident to both $E$ and $E'$. Let $\alpha$ and $\beta$ be the two great circles containing $a_i$ and $a_j$, respectively. As proved above, $\mathcal S$ and $\mathcal S'$ must be discordant, i.e., one is a clockwise swirl and the other is a counterclockwise swirl. Thus, $E$ and $E'$ lie on the same side of $a_i$. Furthermore, since both eyes are spherically convex (\cref{p:eyeconv}), they lie in the same hemisphere bounded by $\alpha$. For the same reason, $E$ and $E'$ lie in the same hemisphere bounded by $\beta$.

Therefore, there is a spherical lune $L$, bounded by $\alpha$ and $\beta$, such that both $E$ and $E'$ lie completely in $L$. Also, $E$ separates $L$ into two disjoint regions $A$ and $B$, such that some internal points of $a_i$ lie on the boundary of $A$ (but not on $B$) and some internal points of $a_j$ lie on the boundary of $B$ (but not on $A$), as shown in \cref{fig:simple}. Since the boundary of $E'$ connects $a_i$ and $a_j$, it must intersect the boundary of $E$. Hence, by \cref{p:eyebound}, $E$ and $E'$ must have disjoint interiors. Thus, as $E'$ is bounded by $a_i$, it must lie entirely in $A$. But $E'$ is also bounded by $a_j$, implying that it must lie in $B$, which is a contradiction. (In the special case where $a_i$ and $a_j$ are incident arcs, either $A$ or $B$ degenerates to the empty set, which of course cannot contain $E'$.)
\end{proof}

More is actually known about swirl graphs.

\begin{theorem}\label{t:4swirls}
Every SOD has at least four swirls.\qed
\end{theorem}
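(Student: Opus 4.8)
The plan is to exploit \cref{t:graph}, which already gives that the swirl graph $G$ of an SOD is simple, planar, and bipartite with non-empty partite sets, hence that there are at least two swirls. To push this to four I would prove that $G$ has minimum degree at least~$3$; a simple graph in which every vertex has three distinct neighbours has at least four vertices, which then finishes the proof. Since by \cref{p:05} no two arcs hit each other, every swirl has degree at least~$3$, so it suffices to know that each arc of a swirl lies in a second swirl and that the second swirls so obtained, along the ($\geq 3$) arcs of a fixed swirl, are pairwise distinct --- distinctness being automatic, since two swirls sharing two arcs would create a multiple edge in $G$, contradicting simplicity. So the real content is the \emph{Key Lemma}: every arc of the SOD belongs to exactly one clockwise and exactly one counterclockwise swirl (that an arc cannot belong to two concordant swirls is part of the bipartiteness argument of \cref{t:graph}).

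The Key Lemma fails for general SODs --- to many SODs one can attach a ``bridge'' arc running across the interior of a single tile between two of its edges, and such an arc is hit by no other arc and so lies in no swirl at all. The fix is to reduce to \emph{irreducible} SODs: deleting an arc can destroy swirls but never create one, so an SOD has at least as many swirls as any minimal (hence irreducible) sub-SOD, and it is enough to prove the theorem in the irreducible case. In an irreducible SOD every arc is hit by some other arc, for otherwise it could be deleted (it blocks nothing, so A2 survives, and A1, A3 are inherited), contradicting irreducibility; in fact, for any arc $a$ the set of arcs that reach $a$ along chains of the ``hits'' relation must be the whole SOD, since its complement would otherwise be a proper sub-SOD. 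Thus the ``hits'' digraph of an irreducible SOD is strongly connected, and in particular every arc lies on a directed cycle of it --- a cycle of arcs each of which feeds into the next.

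The main obstacle is to turn such a cycle into a genuine \emph{swirl} of each orientation. First one must show that any cycle of arcs each feeding into the next is monotone --- it turns consistently clockwise or consistently counterclockwise --- which I expect to follow from axiom~A3 much as in \cref{p:eyeconv,p:eyebound}, by showing that the region it bounds is spherically convex. Second one must show that through a given arc $a$ cycles of \emph{both} orientations pass: because $a$ feeds into a distinct arc at each endpoint (\cref{p:04}) and the ``hits'' digraph is strongly connected, one obtains a hits-cycle through $a$ using the first out-edge of $a$ and another using the second; once $a$ is fixed together with the (unique, by~A3) side from which it is hit, turning clockwise forces a swirl to leave $a$ towards one of its endpoints and turning counterclockwise towards the other, so the two cycles yield one clockwise and one counterclockwise swirl. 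Granting the Key Lemma, the swirl graph of an irreducible SOD is simple with every vertex of degree equal to the (at least $3$) degree of the corresponding swirl, so $G$ has minimum degree at least~$3$, hence at least four vertices, and the SOD has at least four swirls.
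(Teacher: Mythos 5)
The paper does not actually prove \cref{t:4swirls}: the statement is given with a \qed and the proof is deferred to an external reference, so there is no in-paper argument to compare yours against. Judged on its own, your proposal does contain sound pieces: the reduction to irreducible SODs is valid (a swirl of a sub-SOD is a swirl of the ambient SOD, since the ``feeds into'' relation and the turning orientation are intrinsic to the arcs), and your observation that the ``hits'' digraph of an irreducible SOD is strongly connected is correct --- the set of arcs that cannot reach a given arc $a$ is closed under the blocking relation, hence satisfies A2 on its own and would be a proper sub-SOD if non-empty.

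The argument breaks, however, at its central step, the Key Lemma that in an irreducible SOD every arc lies in one clockwise and one counterclockwise swirl. This is false, and the paper itself supplies counterexamples: \cref{fig:u05,fig:u12} exhibit \emph{uniform} SODs, which are irreducible by \cref{p:uniirred}, that are \emph{not swirling}, i.e., in which some arc belongs to at most one swirl. The same figures refute your intermediate claim that every directed cycle of the hits digraph is monotone: the red \emph{non-swirling cycles} there are exactly cycles of arcs each feeding into the next that turn inconsistently and bound no convex eye (this is the phenomenon that \cref{t:cycles} is about). Your fallback --- fix $a$ and always turn clockwise --- does not rescue the lemma either: that walk is a deterministic map on the finite set of (arc, direction) states, but the map is not injective, since by axiom~A3 all arcs hitting a given arc approach it from the same side and therefore merge into the same successor state; the walk is thus only \emph{eventually} periodic, and the clockwise swirl extracted from its period need not contain $a$ (this is precisely how the proof of \cref{t:graph} is phrased). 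Consequently the swirl graph can have vertices of degree smaller than~$3$ --- the arcs of a swirl need not each be shared with a distinct second swirl --- and the step ``minimum degree $\geq 3$, hence at least four vertices'' collapses. What your route correctly recovers is the existence of at least one swirl of each orientation, which is already the content of \cref{t:graph}; passing from two swirls to four requires a genuinely different argument.
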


This result was announced in~\cite{tjc}, and a proof can be found in~\cite{arxiv}. From \cref{t:4swirls}, it easily follows that every SOD has at least eight arcs. On the other hand, \cref{fig:8arc} shows an example of an SOD with exactly eight arcs and exactly four swirls, which is therefore minimal.

\begin{figure}[ht]
  \centering
  \includegraphics[scale=\figscale]{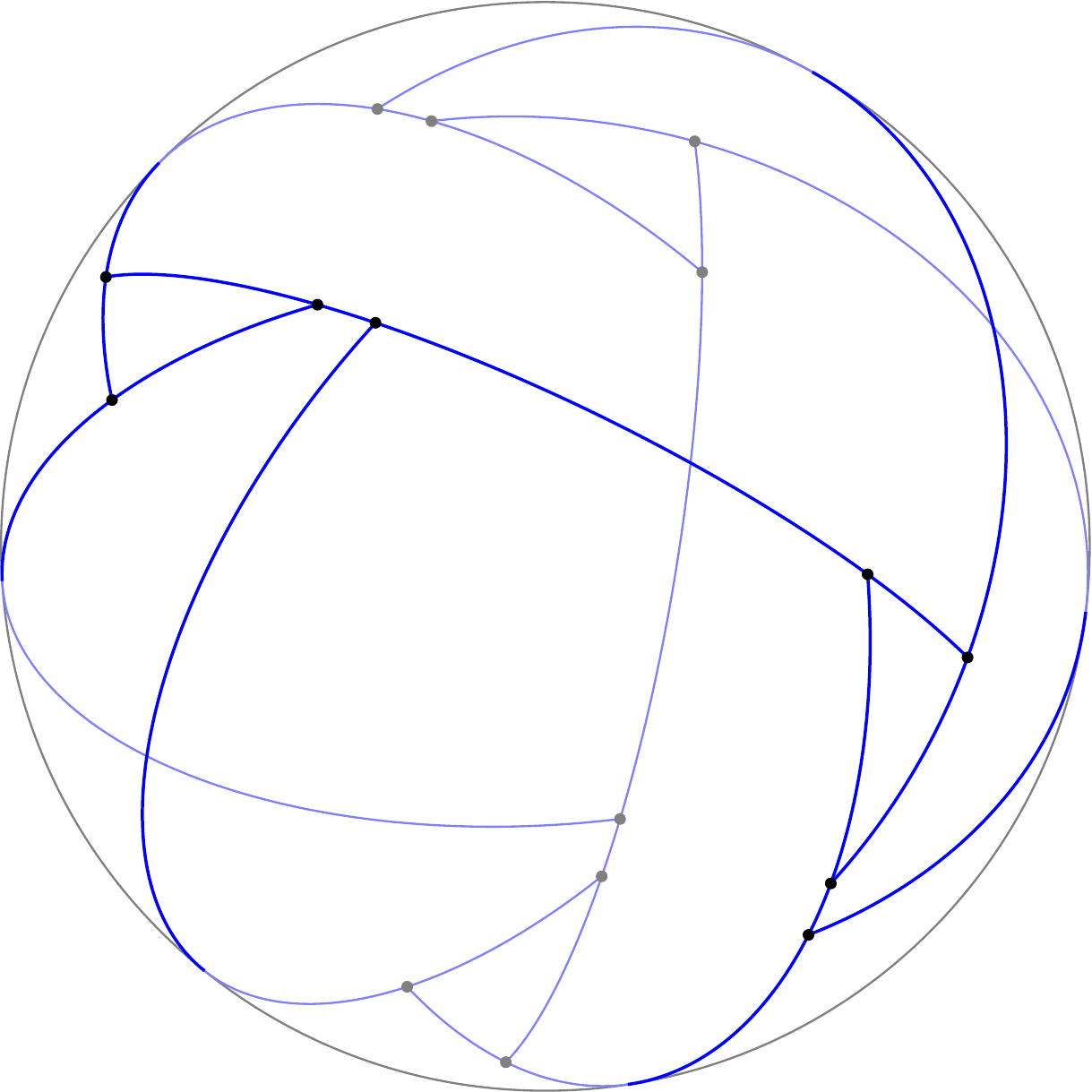}
  \caption{An SOD with eight arcs and four swirls.}
  \label{fig:8arc}
\end{figure}

It is not yet clear whether there are SODs with only one clockwise swirl (or only one counterclockwise swirl), but there is overwhelming evidence that this is not the case.

\begin{conjecture}
Every SOD has at least two clockwise and two counterclockwise swirls.
\end{conjecture}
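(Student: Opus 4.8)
The natural strategy is a proof by contradiction: suppose some SOD $\mathcal D$ has a unique clockwise swirl $\mathcal S$ (the case of a unique counterclockwise swirl is symmetric). By \cref{t:4swirls} there are then at least three other swirls, all necessarily counterclockwise, say $\mathcal T_1,\dots,\mathcal T_m$ with $m\ge 3$; and since concordant swirls cannot share an arc (the bipartiteness part of \cref{t:graph}), every edge of the swirl graph joins some $\mathcal T_j$ to $\mathcal S$. Together with simplicity of the swirl graph, this means each $\mathcal T_j$ adjacent to $\mathcal S$ shares with it exactly one arc, and no two $\mathcal T_j$ share an arc at all. So the contradiction we are after concerns a rigid picture: one clockwise ``hub'' swirl with several counterclockwise swirls each clinging to a single one of its arcs.

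I would next pin down the geometry of the eyes. Write $E$ for the eye of $\mathcal S$ and $E_j$ for the eye of $\mathcal T_j$. Whenever $\mathcal T_j$ is adjacent to $\mathcal S$, the boundaries of $E$ and $E_j$ meet along the shared arc, so by \cref{p:eyebound} their interiors are disjoint; since the eyes are spherically convex (\cref{p:eyeconv}), $E$ and $E_j$ are separated by a great circle, and a short argument then upgrades this to the statement that the eyes $E_1,\dots,E_m$ have pairwise disjoint interiors. Thus the eyes adjacent to $\mathcal S$ sit around $E$ as disjoint convex ``petals,'' each hugging a distinct edge of $\mathcal S$, while any further (isolated) counterclockwise swirl has an eye that, by the laminar structure implied by \cref{p:eyebound}, is nested in or disjoint from all of these.

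The crux is then to exhibit a \emph{second} clockwise swirl. The plan is to run the clockwise swirl-finding walk from the proof of \cref{t:graph}, started at an arc lying in the complement region $S\setminus\bigl(\bar E\cup\bigcup_j\bar E_j\bigr)$ (or inside one of the eyes, if there is an inner arc there), and to argue that this walk can never spiral into $\mathcal S$. Intuitively, $\mathcal S$ is ``shielded'' because each of its boundary arcs also bounds a counterclockwise petal $E_j$, and --- just as in the ``push-back'' phenomenon used in the proof of \cref{p:eyebound} --- a clockwise walk that reaches such an arc is deflected away from $E$. If this can be made rigorous, the walk must terminate in a clockwise swirl disjoint from $\mathcal S$, contradicting uniqueness.

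This last step is precisely where the difficulty lies, and it is why the statement is still only a conjecture: there is no known \emph{local} obstruction to a ``one clockwise, many counterclockwise'' configuration, so the argument must be global, yet the right invariant is elusive. Inner arcs inside the eyes, the degenerate SODs in which two eyes share a vertex (cf.\ the remark after \cref{p:eyebound}), and the fact that not every arc lies on a swirl (a freshly attached pendant arc does not) all frustrate a clean rotation-number or potential-function argument. A plausible fallback, if the direct route stalls, is to first establish the structural strengthening that the swirl graph has minimum degree at least $2$ --- that is, every swirl shares at least two of its arcs with other swirls --- since a simple bipartite graph of minimum degree $\ge 2$ with both parts non-empty automatically has at least two vertices in each part; but that reformulation does not seem to be any easier than the conjecture itself.
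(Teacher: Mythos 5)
This is not a proof, and you say as much yourself: the entire argument hinges on the claim that a clockwise swirl-finding walk started away from $\mathcal S$ can never ``spiral into'' $\mathcal S$, and you supply no argument for that claim beyond an analogy with the push-back phenomenon in the proof of \cref{p:eyebound}. That analogy does not transfer: the push-back argument there controls how the boundary of one \emph{eye} interacts with another eye, whereas your walk is an arbitrary clockwise walk through the whole diagram, most of whose arcs need not lie on any swirl at all, let alone on the boundary of a petal $E_j$. The walk is only guaranteed to be \emph{eventually} periodic, and nothing in the preliminaries forbids its periodic part from being $\mathcal S$ itself. Since the paper states this result as a conjecture precisely because no such global obstruction is known, there is no ``paper's own proof'' to compare against; your setup (reducing to a star-shaped swirl graph with one clockwise hub via \cref{t:4swirls}, \cref{t:graph}, and \cref{p:eyebound}) is a sensible reformulation of the problem, but it leaves the conjecture exactly as open as it was.

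Two smaller points in the parts you do argue. First, the claim that $E$ and $E_j$ are ``separated by a great circle'' is at odds with the analysis in the proof of \cref{t:graph}, where discordant swirls sharing an arc $a_i$ have eyes on the \emph{same} side of the great circle through $a_i$; disjointness of interiors follows from \cref{p:eyebound} alone and does not need (and does not give) a separating great circle through the shared arc. Second, the asserted ``short argument'' that $E_1,\dots,E_m$ have pairwise disjoint interiors is not automatic: the corollary to \cref{p:eyebound} only says two eyes are internally disjoint \emph{or} nested, and ruling out nesting among the petals requires an argument you have not given. Neither issue is fatal to the overall plan, but both would need to be repaired even if the crux step were found.
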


\section{Swirling SODs\label{sec:5}}
This section is devoted to a special type of SODs whose arcs always meet forming swirls. Patterns arising in these SODs are found in modular origami, globe knots, rattan balls, etc.

\begin{definition}
An SOD is \emph{swirling} if each of its arcs is part of two swirls.
\end{definition}

We define the \emph{degree} of a (swirling) SOD as the the maximum degree of its swirls. Obviously, the degree of an SOD is at least~3. An example of a swirling SOD of degree~3 is found in \cref{fig:example}; further examples of swirling SODs are in \cref{fig:swirling}. All of these SODs were obtained from convex polyhedra or, more precisely, from convex subdivisions of the sphere, by a general process that we call \emph{swirlification}. As we will see in this section, there is a deep relationship between convex polyhedra and swirling SODs.

A \emph{convex subdivision} of the unit sphere is a partition into spherically convex polygons called \emph{faces} by a finite set of internally disjoint geodesic arcs called \emph{edges}, such that no two incident edges are collinear. As usual, the graph induced by the edges is called the \emph{1-skeleton} of the convex subdivision. Note that the visibility map of a convex polyhedron with respect to an internal point is the 1-skeleton of a convex subdivision of the unit sphere.

\begin{definition}
A \emph{swirlable} subdivision of the unit sphere is a convex subdivision where each face has an even number of edges.
\end{definition}

\begin{proposition}\label{p:tiling}
A convex subdivision of the unit sphere is swirlable if and only if its 1-skeleton is bipartite.
\end{proposition}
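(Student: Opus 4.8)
The plan is to prove the two implications separately, exploiting the standard fact that a graph is bipartite if and only if it contains no odd cycle. Let $\Sigma$ be a convex subdivision of the unit sphere with $1$-skeleton $G$. The more routine direction is that bipartiteness of $G$ implies swirlability. First I would take a $2$-coloring of the vertices of $G$, say with colors black and white. Walking around the boundary of any face, the vertices alternate colors, so the boundary cycle has even length; hence every face has an even number of edges and $\Sigma$ is swirlable by definition.

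For the converse, assume $\Sigma$ is swirlable, i.e. every face has an even number of edges, and suppose for contradiction that $G$ contains an odd cycle. The key idea is to take a \emph{shortest} odd cycle $\gamma$ in $G$ and derive a contradiction from the evenness of the face degrees. Being a cycle on the sphere, $\gamma$ bounds a closed region $R$ (a disk) whose interior is tiled by some of the faces of $\Sigma$. I would argue by a parity/counting argument on this disk: sum the edge-counts of all faces contained in $R$. Each edge of $\gamma$ is counted once (it bounds exactly one face inside $R$), and each edge of $\Sigma$ strictly inside $R$ is counted twice; so the total is congruent mod $2$ to the length of $\gamma$, which is odd. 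On the other hand, the total is a sum of even numbers (one per interior face), hence even — a contradiction. Therefore $G$ has no odd cycle and is bipartite.

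A cleaner way to run the same argument, which I would prefer to present, is via a direct $2$-coloring. Pick any vertex $v_0$ and for each vertex $v$ define its color by the parity of the graph-distance (or of the length of any fixed path) from $v_0$ to $v$ in $G$; well-definedness of this coloring is exactly the statement that all cycles are even, so instead I would color by a spanning tree and check consistency across non-tree edges. Concretely: choose a spanning tree $T$ of $G$, $2$-color the vertices by depth parity in $T$, and it remains to verify that every non-tree edge joins vertices of opposite colors. Each non-tree edge $e$ closes a fundamental cycle; and since every face of $\Sigma$ is an even cycle, and the fundamental cycle of $e$ is an edge-disjoint-mod-$2$ sum of the face-boundary cycles enclosed by it (working in the cycle space over $\mathbb F_2$ on the sphere), its length is even. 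Hence $e$ joins opposite colors, the coloring is proper, and $G$ is bipartite.

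The main obstacle is making rigorous the claim that, on the sphere, a cycle is the mod-$2$ sum of the face boundaries it encloses — equivalently, that the face-boundary cycles generate the cycle space of the $1$-skeleton. This is a standard fact for connected plane (equivalently spherical) graphs: the cycle space over $\mathbb F_2$ has dimension $|E|-|V|+1$, and by Euler's formula the number of bounded faces of any chosen planar embedding is exactly $|E|-|V|+1$, and these face-boundary cycles are linearly independent and hence form a basis. I would cite this (or reprove it in one line via Euler's formula, as is done for \cref{p:08}) and then the parity bookkeeping is immediate. One should also note that $G$ is connected — this follows because $\Sigma$ is a subdivision of the whole sphere, so the complement of the edge set is a disjoint union of open disks, forcing connectivity of $G$ — and that the no-two-incident-edges-collinear condition plays no role here beyond guaranteeing that $\Sigma$ is a genuine subdivision with well-defined faces.
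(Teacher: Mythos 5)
Your proof is correct and follows essentially the same route as the paper, whose entire argument is the one-line observation that bipartiteness is equivalent to having no odd cycles, which in turn is equivalent to all faces being even-sided. You have simply supplied the standard cycle-space-over-$\mathbb F_2$ justification for the second equivalence that the paper leaves implicit.
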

\begin{proof}
The 1-skeleton is bipartite if and only if its has no odd cycles, which is true if and only if each face has an even number of edges.
\end{proof}

Note that we can also obtain a swirlable subdivision of the sphere by taking the dual of a subdivision whose vertices have even degree, or by truncating it. More generally, we have the following.

\begin{proposition}\label{p:dualtrunc}
A convex subdivision of the unit sphere is swirlable if and only if its truncated dual is swirlable.
\end{proposition}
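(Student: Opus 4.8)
The plan is to argue purely combinatorially, starting from the definition itself: a convex subdivision is swirlable precisely when every face has an even number of edges (equivalently, by \cref{p:tiling}, when its $1$-skeleton is bipartite). So it suffices to describe the faces of the truncated dual and to compare their sizes, modulo $2$, with those of the original subdivision.

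First I would record the combinatorial structure of the truncated dual $T$ of a convex subdivision $\Sigma$. In the dual $\Sigma^*$, each face $F$ of $\Sigma$ becomes a vertex of degree $|F|$ (its number of edges), and each vertex $w$ of $\Sigma$ becomes a face with $\deg_\Sigma(w)$ edges. Truncating $\Sigma^*$ cuts off every vertex: a vertex of degree $d$ is replaced by a small $d$-gon, and every face incident to that vertex gains exactly one new edge (a side of that $d$-gon) at the corresponding corner. Hence the faces of $T$ fall into two families: for each face $F$ of $\Sigma$, the small polygon created at the dual vertex of $F$, which has exactly $|F|$ edges; and for each vertex $w$ of $\Sigma$, the shrunk copy of the dual face of $w$, whose $\deg_\Sigma(w)$ original (now shortened) edges alternate with $\deg_\Sigma(w)$ truncation edges, for a total of $2\deg_\Sigma(w)$ edges. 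The step requiring the most care here is exactly this bookkeeping — confirming that cutting off a degree-$d$ vertex inserts one edge into each of its $d$ incident faces and produces one new $d$-gon, and that the corners of a dual face are pairwise distinct — together with the (routine, and implicit whenever one speaks of ``the truncated dual'') geometric fact that a sufficiently small, generic truncation again yields a bona fide convex subdivision of the sphere, with geodesic edges, spherically convex faces, and no two incident edges collinear; the amount of truncation does not affect the combinatorics.

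With the face structure in hand the proposition is immediate: every face in the second family has $2\deg_\Sigma(w)$ edges, hence even length, so $T$ is swirlable if and only if every face of the first family has even length, i.e.\ if and only if $|F|$ is even for every face $F$ of $\Sigma$, i.e.\ if and only if $\Sigma$ is swirlable. One could equally phrase the last step through \cref{p:tiling}, noting that $T$ is trivalent and that a trivalent spherical subdivision is bipartite precisely when all of its faces have even length, which by the count above happens exactly when the $1$-skeleton of $\Sigma$ is bipartite.
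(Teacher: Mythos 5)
Your proof is correct and takes essentially the same approach as the paper's: both identify the two families of faces of the truncated dual (the small polygon at each dual vertex, with as many edges as the corresponding face of the original subdivision, and the shrunk dual faces, whose edge counts are doubled and hence always even) and conclude that parity is preserved. The paper handles the geometric side with the same brief observation you make, namely that truncation and dualization preserve convexity of the subdivision.
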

\begin{proof}
Observe that the operations of truncation and taking the dual of a convex subdivision of the sphere preserve the convexity of the subdivision. To conclude the proof, we only have to show that a convex subdivision has even-sided faces if and only if its truncated dual does.

\cref{fig:pp4} illustrates the procedure of truncating the dual of a convex subdivision $\mathcal S_1$ of the sphere. The edges of $\mathcal S_1$ are drawn in blue, and the ones in red are the edges of its dual $\mathcal S_2$. Note that $\mathcal S_2$ has a vertex in each face of $\mathcal S_1$, and vice versa. Moreover, the degree of each vertex of $\mathcal S_2$ is equal to the number of edges of the face of $\mathcal S_1$ containing it. Truncating $\mathcal S_2$ amounts to drawing the green edges and deleting the dashed portions of the red edges, as well as all red vertices; let $\mathcal S_3$ be the new subdivision. The last operation has two effects on the set of faces. On one hand, it doubles the edges of each face of $\mathcal S_2$; thus, these faces are always even-sided. On the other hand, it adds a new face around each vertex of $\mathcal S_2$, with a number of edges equal to its degree. Hence, these faces are even-sided if and only if the faces of $\mathcal S_1$ are even-sided.
\end{proof}

\begin{figure}[ht]
  \centering
  \includegraphics[scale=\figscale]{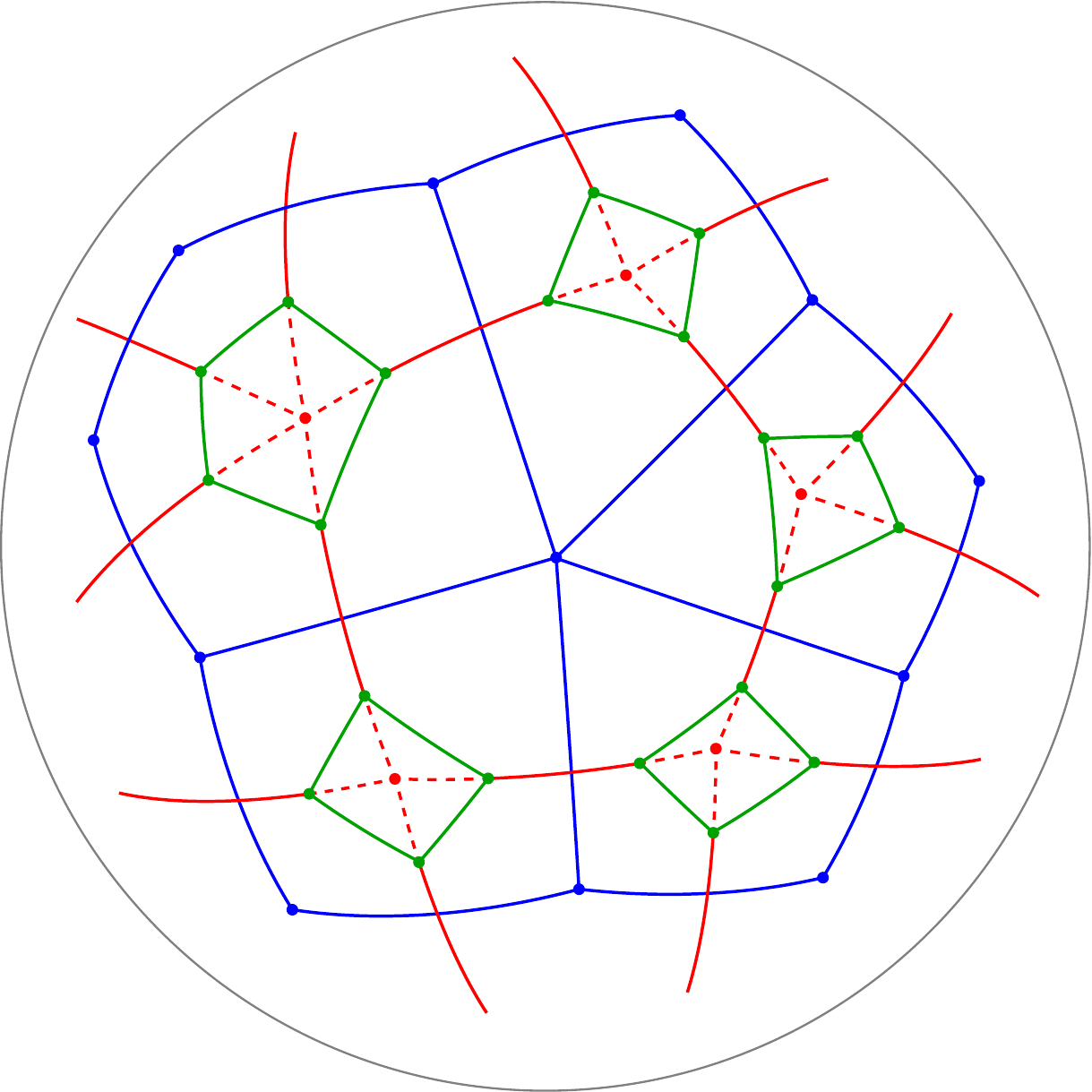}
  \caption{Constructing the truncated dual of a convex subdivision of the sphere.}
  \label{fig:pp4}
\end{figure}

Given a swirlable subdivision of the sphere, the operation of turning each of its vertices into a swirl, going clockwise or counterclockwise according to the bipartition of the 1-skeleton, is called \emph{swirlification}. As we will prove next, the swirlification operation allows us to turn any swirlable subdivision of the sphere into a swirling SOD. In fact, we will prove something more: In the case of swirling SODs of degree~3, the inverse operation is also possible, although not always trivial.

This will lead us to a characterization of the swirl graphs of swirling SODs as the 3-regular bipartite polyhedral graphs (a graph is \emph{polyhedral} if can be realized as the 1-skeleton of a convex polyhedron). Note that, by a well-known theorem of Steinitz, these graphs can also be described as the 3-regular 3-vertex-connected planar bipartite graphs.\footnote{Steinitz's theorem states that the polyhedral graphs are precisely the 3-vertex-connected planar graphs (see~\cite[Chapter~13]{convex}).} These are also known as \emph{Barnette graphs}~\cite{barnette}, because Barnette conjectured in 1969 that they are Hamiltonian.

We will start by illustrating the swirlification operation.

\begin{proposition}\label{t:swirlification}
The 1-skeleton of any swirlable subdivision of the unit sphere is isomorphic to the swirl graph of a swirling SOD.
\end{proposition}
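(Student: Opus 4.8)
The plan is to realise the desired swirling SOD $\mathcal D$ by \emph{opening up} every vertex of the swirlable subdivision $\mathcal S$ into a small swirl, so that the arcs of $\mathcal D$ correspond bijectively to the edges of $\mathcal S$ and the swirls of $\mathcal D$ correspond bijectively to the vertices of $\mathcal S$. Then the edge $e=\{v,u\}$ of $\mathcal S$ yields an arc belonging to precisely the two swirls created at $v$ and at $u$, which will make the swirl graph of $\mathcal D$ isomorphic to the $1$-skeleton of $\mathcal S$.

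Concretely, I would first fix a $2$-colouring of the vertices of $\mathcal S$ — this exists because the $1$-skeleton of a swirlable subdivision is bipartite (\cref{p:tiling}) — and declare the vertices of one colour \emph{clockwise} and the others \emph{counterclockwise}. For every edge $e$ of $\mathcal S$ I would choose a great circle $g_e$ obtained from the great circle of $e$ by a tiny rotation that displaces the whole arc $e$ to a prescribed side (this is possible because $e$, being a geodesic arc, is shorter than a great semicircle, so the two points where $g_e$ meets the great circle of $e$ can be placed on the complementary major arc). The side is dictated by the colours of the endpoints: since a single rotation displaces $e$ to the \emph{same} side of $e$ at both of its ends, one checks that the requirement at the clockwise endpoint and the requirement at the counterclockwise endpoint are compatible — and these are the only two requirements because adjacent vertices of a bipartite graph receive opposite colours. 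Around each vertex $v$ of degree $d$, the $d$ circles of the edges incident to $v$, taken in cyclic order, then cut out a small spherically convex $d$-gon $E_v$ around $v$; here convexity of the faces of $\mathcal S$ is used, since the angles subtended at $v$ by those faces are all less than $\pi$, which both forces $d\geq 3$ and guarantees that the $d$ circles actually bound a bounded region. The arc $\alpha_e$ of $\mathcal D$ is the portion of $g_e$ between the two vertices of $E_v$ and of $E_u$ at which $g_e$ is truncated; by construction the $d$ arcs bounding $E_v$ form a swirl of degree $d$, of the orientation determined by the colour of $v$, with eye $E_v$.

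It then remains to verify that $\mathcal D=\{\alpha_e\}$ satisfies the axioms. Axiom~A2 is built into the construction (each $\alpha_e$ feeds into the cyclic successor of $e$ around $v$ and around $u$), the hits being non-degenerate because incident edges of $\mathcal S$ are non-collinear and this is preserved by a sufficiently small perturbation. Axiom~A1 holds for small enough rotations, since away from the vertices the $\alpha_e$ are small perturbations of the internally disjoint edges of $\mathcal S$, and near a vertex $v$ they meet only along the boundary of the simple convex polygon $E_v$. The crux is axiom~A3: near $v$ the arc $\alpha_e$ is hit from the side of its great circle containing $E_v$ (the hitting arc lies along the convex boundary of $E_v$), and near $u$ from the side containing $E_u$; I would check that these two sides of $\alpha_e$ coincide exactly when $v$ and $u$ have opposite colours, which is precisely the compatibility of the displacement directions discussed above. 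As adjacent vertices of a bipartite graph do have opposite colours, A3 holds; conversely, this is exactly why the construction would fail for a non-swirlable subdivision, matching the fact (\cref{t:graph}) that concordant swirls cannot share an arc.

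Finally I would identify the swirls of $\mathcal D$. By \cref{p:04}, $\alpha_e$ hits exactly two arcs, one at each endpoint — the successor of $e$ around $v$ and the successor of $e$ around $u$ — and, by construction, the first of these hits turns according to the colour of $v$ and the second according to the colour of $u$, which are opposite. Hence a swirl of $\mathcal D$, being a cycle of hits of a single handedness, can only use the hits occurring at vertices of one fixed colour; following it from any arc $\alpha_e$, it must therefore run through the cyclic successors of $e$ around a single vertex, and so coincides with some eye-swirl $E_v$. Consequently every arc $\alpha_e$ belongs to exactly the two swirls $E_v$ and $E_u$ (so $\mathcal D$ is swirling), the swirls of $\mathcal D$ are exactly the $E_v$, and the swirl graph of $\mathcal D$ has one edge, namely $\alpha_e$, joining $E_v$ and $E_u$ for each edge $e=\{v,u\}$ of $\mathcal S$ — i.e.\ it is isomorphic to the $1$-skeleton of $\mathcal S$. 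I expect the main obstacle to be the orientation bookkeeping in the verification of A3 (and in ruling out spurious swirls); the remaining steps are routine smallness arguments.
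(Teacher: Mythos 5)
Your proposal is correct and follows essentially the same route as the paper: the paper's proof is exactly this swirlification construction (perturb each edge slightly so that every vertex opens into a small swirl whose orientation is dictated by the bipartition of the 1-skeleton, with convexity of the faces guaranteeing the eyes form properly). Your write-up is considerably more detailed than the paper's brief sketch — in particular the observation that a single rotation of each edge's great circle induces opposite handedness at its two endpoints, matching the 2-colouring, and the explicit identification of all swirls — but the underlying argument is the same.
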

\begin{proof}
Let $\mathcal S$ be any swirlable subdivision of the unit sphere. Since each edge of $\mathcal S$ is a geodesic arc, we can perturb its endpoints within a small-enough $\epsilon$-neighborhood without making them antipodal. This operation allows us to create a small swirl in lieu of each vertex of $\mathcal S$, while moving each edge by at most $\epsilon$ (this is possible in particular because the faces of $\mathcal S$ are convex, and hence do not have reflex angles). If the swirls go clockwise or counterclockwise according to the bipartition of the 1-skeleton of $\mathcal S$, and if $\epsilon$ is small enough, the resulting set of geodesic arcs is a swirling SOD. Since every arc in a swirling SOD is an edge of its swirl graph, the swirl graph must be $G$.
\end{proof}

\cref{fig:swirling} shows two applications of \cref{p:dualtrunc,t:swirlification}. The two top SODs are swirlifications of a 6-trapezohedron and a truncated 6-antiprism; note that $k$-trapezohedron and $k$-antiprism are dual polyhedra. The two bottom SODs are swirlifications of a rhombic triacontahedron and a truncated icosidodecahedron; the rhombic triacontahedron and the icosidodecahedron are dual polyhedra. This process can be continued indefinitely: By repeatedly taking the truncated dual of a swirlable subdivision of the sphere and swirlifying it, one obtains an infinite family of swirling SODs.

\begin{figure}[!ht]
  \centering
  \includegraphics[width=\linewidth]{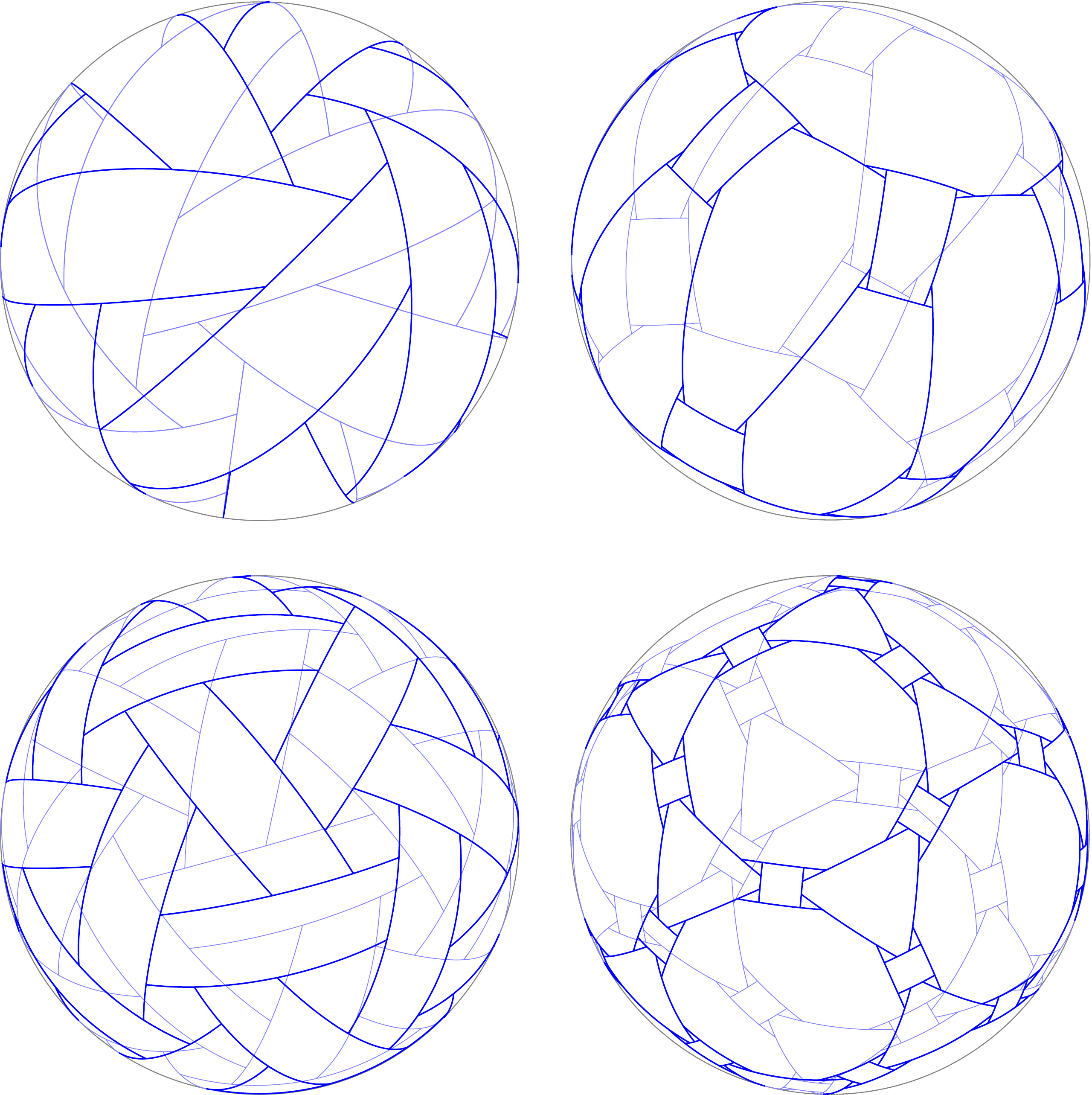}
  \caption{Examples of the swirlification method developed in \cref{sec:5} to produce swirling SODs from convex polyhedra with a bipartite 1-skeleton (or, equivalently, from swirlable subdivisions of the unit sphere). The pictures show swirling SODs resulting from a 6-trapezohedron, a truncated 6-antiprism, a rhombic triacontahedron, and a truncated icosidodecahedron, respectively.}
  \label{fig:swirling}
\end{figure}

We will now discuss the connectivity of the swirl graphs of swirling SODs. Observe that \cref{p:07a} immediately implies that the swirl graph of any swirling SOD is 2-edge-connected. The two following theorems strengthen this result.

\begin{theorem}\label{t:swirling1}
The swirl graph of any swirling SOD is 2-vertex-connected.
\end{theorem}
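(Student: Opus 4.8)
The plan is to prove two things: that the swirl graph $\Gamma$ of a swirling SOD $\mathcal D$ is connected, and that it has no cut vertex. Connectivity is the easy half. In a swirling SOD every arc lies in exactly two swirls — one clockwise and one counterclockwise, since an arc lying in two concordant swirls would be hit from both sides, violating axiom~A3 — so the edges of $\Gamma$ are in bijection with the arcs of $\mathcal D$. If $\Gamma$ were disconnected, we could split the swirls into two nonempty classes sharing no arc; the unions of the arcs of the two classes would then be two disjoint nonempty subsets of the sphere whose union is the union of all arcs of $\mathcal D$, contradicting \cref{p:07b}. (The same bijection together with \cref{p:07a} yields the 2-edge-connectivity already noted in the text: deleting a single edge $e$ of $\Gamma$ cannot disconnect it, for otherwise the arc-unions of the two sides would meet only in the arc $e$, so removing $e$ from $\mathcal D$ would disconnect the union of arcs.)

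For the absence of cut vertices I would argue by contradiction. Suppose a swirl $\sigma$ is a cut vertex of $\Gamma$, and let $E = E_\sigma$ be its eye, a spherically convex $k$-gon with $k = \deg_\Gamma(\sigma)$ (\cref{p:eyeconv}). Since $\Gamma$ is simple (\cref{t:graph}), $\sigma$ has $k$ distinct neighbours $\tau_1,\dots,\tau_k$, with $\tau_i$ sharing exactly one arc $e_i$ with $\sigma$. Three ingredients are assembled. \emph{From 2-edge-connectivity}: for each neighbour $\tau_i$ there is a $\sigma$–$\tau_i$ path in $\Gamma$ avoiding $e_i$, and it leaves $\sigma$ along some other edge $e_{i'}$, so $\tau_i$ and $\tau_{i'}$ lie in the same component of $\Gamma-\sigma$; hence every component of $\Gamma-\sigma$ contains at least two of the $\tau_i$, and in particular $k\ge 4$. \emph{From planarity}: the edges at $\sigma$ running into one component of $\Gamma-\sigma$ are consecutive in the rotation at $\sigma$, so $\tau_1,\dots,\tau_k$ — taken in the cyclic order in which $e_1,\dots,e_k$ occur around $\partial E$ — split into $m\ge 2$ contiguous blocks, one per component, each of size $\ge 2$. \emph{From the geometry of eyes}: $\sigma$ and each $\tau_i$ are discordant (axiom~A3), so their eyes lie on the same side of $e_i$ (as in the proof of \cref{t:graph}), whence by \cref{p:eyebound} the eye $E_{\tau_i}$ is either nested with $E$ or internally disjoint from it; moreover at most one $\tau_i$ can satisfy $E\subsetneq E_{\tau_i}$, because if $\tau_i$ and $\tau_j$ ($i\ne j$) both did, the two containing eyes would be nested, say $E_{\tau_i}\subseteq E_{\tau_j}$, and then the arc $e_j\subseteq\partial E\subseteq\overline{E_{\tau_i}}$ would be forced onto $\partial E_{\tau_i}$, making $e_j$ an arc of $\tau_i$ in addition to $\sigma$ and $\tau_j$ — impossible, since each arc lies in only two swirls.

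The crux, and the step I expect to be the real obstacle, is to turn these ingredients into a contradiction — concretely, to prove a \emph{separation statement}: the arcs and eyes of a single component of $\Gamma-\sigma$ are confined near the corresponding contiguous block of $\partial E$, so that two different blocks are geometrically walled off from each other by $\partial E$, which together with the bound ``at most one enclosing neighbour'' is incompatible with a block having two or more neighbours of $\sigma$. Establishing this is delicate precisely because an arc of a swirl need not lie entirely on the boundary of its eye — only a sub-arc does, the remainder sticking out past a vertex of the eye — so one must track, using axioms~A1 and~A3 and repeated applications of \cref{p:eyebound}, how the feeders subdivide the shared arcs and rule out the configuration in which two neighbours lying on opposite sides of a block boundary have eyes reaching across it. It helps to keep the \emph{irreducible} case in mind, where every eye is a single tile and these sub-arc complications are tamer: there $E$ borders exactly $k\ge 3$ distinct tiles, a neighbour with $E_{\tau_i}\subseteq E$ would have to satisfy $E_{\tau_i}=E$ and hence share all $k$ arcs with $\sigma$, contradicting simplicity, so every neighbour is either the lone enclosing one or has eye internally disjoint from $E$ — which is the cleanest setting in which to carry out the separation argument before extending it to general swirling SODs.
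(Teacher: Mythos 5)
Your first half (connectivity of $\Gamma$ via the bijection between arcs and edges together with \cref{p:07b}) is fine, and the preliminary observations in the second half are all correct: every component of $\Gamma-\sigma$ would contain at least two neighbours of $\sigma$, the neighbours fall into contiguous blocks in the rotation at $\sigma$, each neighbour's eye is nested with or internally disjoint from $E$ by \cref{p:eyebound}, and at most one neighbour's eye can contain $E$. But the proposal is not a proof: the ``separation statement'' you single out as the crux is essentially the entire content of the theorem, and you do not establish it. Everything you do prove would hold equally well around a genuine cut vertex of an abstract 2-edge-connected simple planar bipartite graph, so no contradiction can follow from those ingredients alone; the theorem lives entirely in the geometric step you defer. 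And that step is not routine: the other swirls attach to $\sigma$ along the tails of its arcs beyond the vertices of $E$ (only a sub-arc of each $e_i$ lies on $\partial E$, as you note), and controlling what happens along those tails is exactly where the difficulty sits. Your claim ``$e_j\subseteq\partial E$'' in the at-most-one-enclosing-neighbour argument is also not literally true for the same reason, though that lemma can be repaired.

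For comparison, the paper's proof does not argue by contradiction on the graph at all and never needs nesting of eyes or blocks of neighbours. It fixes the swirl $\mathcal S$ with eye $E$, defines for each arc $a_i$ of $\mathcal S$ its \emph{eyelash} (the maximal open sub-arc of $a_i$ just past the vertex $x_i$ of $E$ containing no endpoints of other arcs), and shows directly that deleting $\partial E$ together with the $k$ eyelashes from the union $D$ of all arcs leaves a connected set. This reduces to checking that the one nontrivial component of the complement --- the union of $E$, the eyelashes, and the interiors of the $k$ tiles adjacent to $E$ --- is simply connected, which is done by deformation-retracting each adjacent tile away in turn, using the clause of \cref{p:eyeconv} stating that any two tiles adjacent to $E$ meet only along a single arc of $\mathcal S$ (plus the fact, via \cref{t:uniform}, \cref{p:uniirred} and \cref{c:eyeuni}, that $E$ has no inner arcs). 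That clause of \cref{p:eyeconv} is the geometric substitute for your missing separation lemma; as written, your proposal stops short of supplying anything in its place.
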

\begin{proof}
Let $\mathcal S=(a_0,a_1,\dots,a_{k-1})$ be any swirl in a swirling SOD $\mathcal D$, and let $x_i$ be the point where $a_{i-1}$ feeds into $a_i$, with $0\leq i<k$ (indices are taken modulo $k$). We define an \emph{eyelash} $\ell_i$ of $\mathcal S$ as the maximal open geodesic arc $x_iy_i\subseteq a_i\setminus x_{i+1}x_i$ that contains no endpoints of arcs of $\mathcal D$, as show in \cref{fig:2conn2}. By \cref{p:eyeconv}, no two arcs of $\mathcal S$ intersect away from $x_0$, $x_1$, \dots, $x_{k-1}$; therefore, $\mathcal S$ has exactly $k$ disjoint eyelashes $\ell_0$, $\ell_1$, \dots, $\ell_{k-1}$.

Let $D$ be the union of the arcs in $\mathcal D$. Our claim is equivalent to the statement that removing the perimeter of $E$ and the $k$ eyelashes of $\mathcal S$ from $D$ does not disconnect $D$. In turn, since no arcs of $\mathcal D$ lie in the interior of $E$,\footnote{It is intuitively true that no eye of a swirling SOD has inner arcs, and therefore $E$ coincides with a single tile of $\mathcal D$. We will not formally prove it here because in \cref{sec:6} we will give a self-contained proof of a more general statement: All \emph{uniform} SODs (hence all swirling SODs) are irreducible.} this is equivalent to the statement that every component of $R=(S\setminus D) \cup E \cup \ell_0\cup \ell_1\cup \dots \cup \ell_{k-1}$ is simply connected, where $S$ is the unit sphere.

Most of the connected components of $R$ are interiors of tiles of $\mathcal D$, which are spherically convex, hence simply connected. The only exception is the component $C$ (the yellow region in \cref{fig:2conn2}) consisting of the union of the eye $E$, the $k$ eyelashes of $\mathcal S$, and the interiors of the tiles $T_0$, $T_1$, \dots, $T_{k-1}$ of $\mathcal D$ adjacent to $E$. By \cref{p:eyeconv}, any two tiles adjacent to $E$ are either disjoint or intersect only along a single arc of $\mathcal S$. Thus, the tile $T_i$ is only adjacent to $T_{i-1}$ along (the closure of) $\ell_i$ and to $T_{i+1}$ along (the closure of) $\ell_{i+1}$. Also, $T_i$ is adjacent to $E$ along $x_ix_{i+1}$. Therefore, the common boundary between the open disk $\mathring T_i$ and $C\setminus \mathring T_i$ is $\ell_i\cup \ell_{i+1}\cup x_ix_{i+1}$, which is simply connected.

It follows that $C$ deformation-retracts onto $C\setminus \mathring T_i$ for every $0\leq i<k$. By composing these $k$ deformation retractions, we obtain a deformation retraction of $C$ onto $E \cup \ell_0\cup \ell_1\cup \dots \cup \ell_{k-1}$, which is obviously contractible. We conclude that $C$ is simply connected, as desired.
\end{proof}

\begin{figure}[ht]
  \centering
  \includegraphics[scale=\figscale]{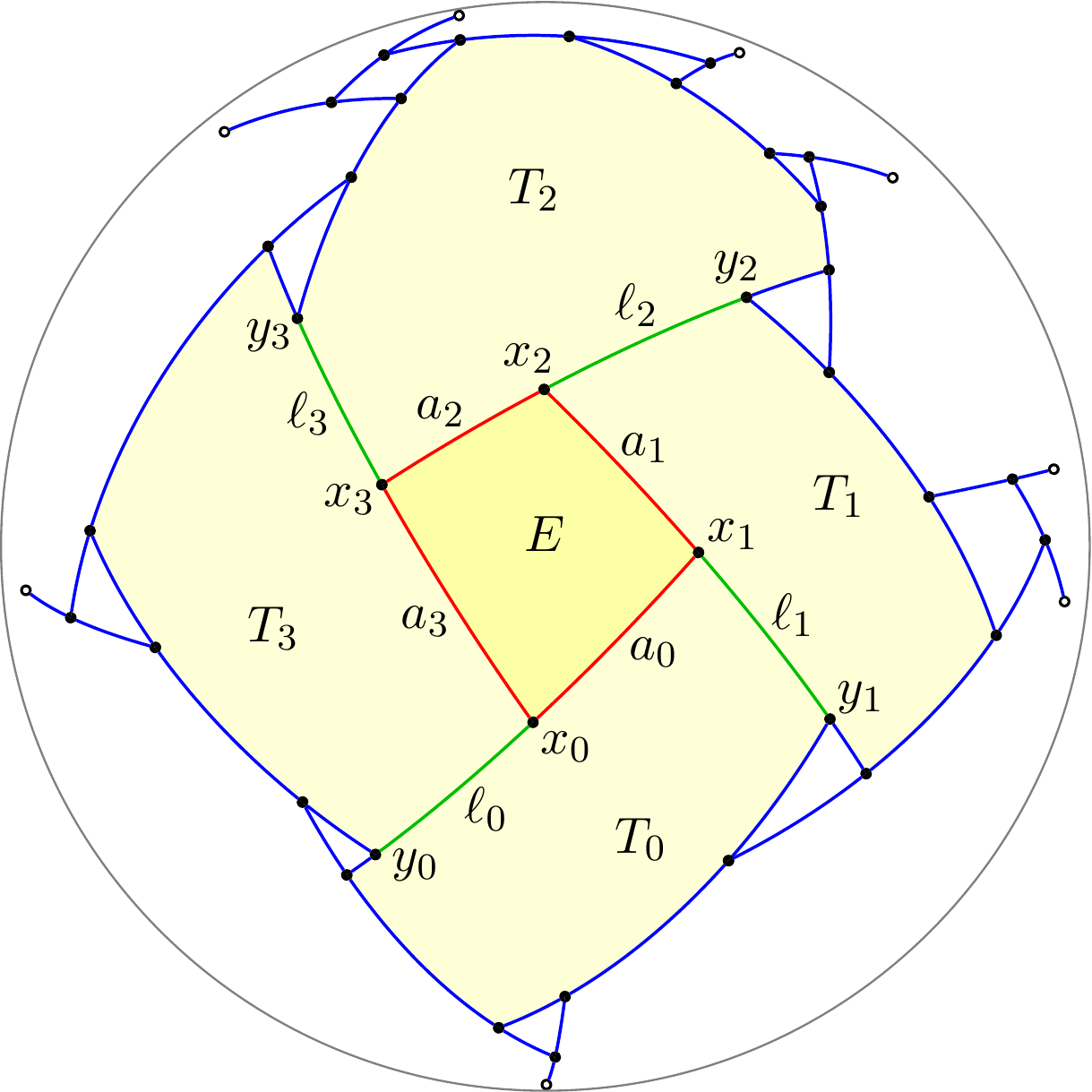}
  \caption{A swirl in a swirling SOD with its eye $E$, eyelashes $\ell_i$, and neighboring tiles $T_i$.}
  \label{fig:2conn2}
\end{figure}

\begin{theorem}\label{t:swirling2}
The swirl graph of any swirling SOD of degree~3 is 3-vertex-connected.
\end{theorem}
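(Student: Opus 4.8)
\section*{Proof proposal}

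The plan is to combine the topological machinery used in \cref{p:07a,p:07b,t:swirling1} with the special combinatorics of degree-$3$ swirls. First I collect the preliminaries. Because $\mathcal D$ has degree $3$, and every swirl has degree at least $3$ by \cref{p:05}, every swirl consists of exactly $3$ arcs; since $\mathcal D$ is swirling, each arc lies in exactly two swirls, which must be discordant (one clockwise, one counterclockwise), for two concordant swirls sharing an arc would violate~A3 as in \cref{t:graph}. Hence the swirl graph $G$ is $3$-regular, and by \cref{t:graph,t:swirling1} it is also simple, planar, bipartite, and $2$-vertex-connected. A short argument shows that a $2$-vertex-cut $\{\mathcal S,\mathcal T\}$ of a $2$-connected cubic graph always yields a $2$-edge-cut (if $\mathcal S\sim\mathcal T$ then each side of the cut meets $\{\mathcal S,\mathcal T\}$ in exactly two edges; otherwise one balances the three edges at $\mathcal S$ and the three at $\mathcal T$ and removes one edge from each of $\mathcal S$, $\mathcal T$ to the ``lighter'' side). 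So it suffices to prove that $G$ has no $2$-edge-cut.

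Next I translate this into a statement on the sphere. Let $D$ be the union of all arcs of $\mathcal D$ and $S$ the unit sphere. I would first establish that, for any set $F$ of at most two arcs, $G\setminus F$ is connected if and only if $D\setminus\bigcup F$ is connected: in the forward direction one uses that any two arcs of a common degree-$3$ swirl intersect (so a path in $G$ corresponds to a connected union of arcs of $\mathcal D$), and in the backward direction one uses that, if one arc feeds into another, the two arcs share a swirl (so a path in $D$ projects to a walk in $G$). Combined with the first paragraph, it then suffices to show that removing any two arcs $a,b$ from $D$ leaves a connected set. Here I invoke the criterion used repeatedly in \cref{sec:3}: $D\setminus\{a,b\}$ is connected if and only if every component of $S\setminus(D\setminus\{a,b\})$ is simply connected. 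As in \cref{p:07a}, removing a single arc $c$ yields a region $C_c$ (the union of $c$ with the interiors of the tiles incident to $c$ along a subarc) that deformation-retracts onto $c$, hence is a disk; and since $\mathcal D$ is swirling, hence irreducible (\cref{sec:6}), each eye is a single triangular tile, and one checks that $C_c$ consists of exactly one tile on the ``smooth'' side of $c$ together with, on the ``hit'' side, the two eyes of the two swirls through $c$ separated by a single non-eye tile. All components of $S\setminus(D\setminus\{a,b\})$ other than tile interiors lie inside $C_a\cup C_b$, and a van Kampen computation shows that $C_a\cup C_b$ is simply connected precisely when $C_a\cap C_b$ is connected. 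Since $C_a\cap C_b$ is a disjoint union of interiors of tiles common to $C_a$ and $C_b$ (plus at most the point $a\cap b$), the theorem reduces to: \emph{in a swirling SOD of degree $3$, no two arcs are incident to two distinct common tiles (nor to one common tile in a configuration where $a$ feeds into $b$ disconnecting $C_a\cap C_b$)}.

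The crux is therefore to exclude these ``parallel-arc'' configurations. Suppose $T,T'$ were two distinct tiles each incident to both $a$ and $b$. They cannot both be eyes, for an eye is incident exactly to the arcs of its swirl, so $T=E(\mathcal U)$ and $T'=E(\mathcal U')$ incident to $a$ and $b$ would mean $\mathcal U$ and $\mathcal U'$ share the two arcs $a,b$, contradicting the simplicity of $G$ (\cref{t:graph}). In the remaining cases at least one of $T,T'$ is an ordinary tile, and one tracks which side of $a$ and of $b$ each of $T,T'$ lies on, using that an eye never has an entire arc as a boundary edge, that the smooth side of each arc carries exactly one incident tile, and \cref{p:eyeconv}; the goal is to reach either a tile forced to have a proper subarc where it must have an entire arc, or two distinct eyes with intersecting interiors and intersecting boundaries, contradicting \cref{p:eyebound}.

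I expect this last step to be the main obstacle. It is a finite but delicate case analysis that depends on the fine local structure of degree-$3$ swirling SODs (how eyes and ordinary tiles are arranged around each arc) and on the global constraint from \cref{p:eyebound}, and the degenerate configurations allowed by \cref{d:1}—arcs sharing an endpoint, or $a$ feeding into $b$—must be handled separately, since for those $C_a\cap C_b$ need not be a clean disjoint union of tile interiors and must be examined directly.
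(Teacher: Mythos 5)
Your reduction chain is genuinely different from the paper's proof (which deletes two \emph{swirls} rather than two \emph{arcs} and settles three configurations directly via \cref{p:eyeconv} and \cref{t:graph}), and its first stages are essentially sound: the swirl graph is cubic, so vertex- and edge-connectivity coincide and you may pass to $2$-edge-cuts; and the dictionary between edge-cuts of $G$ and pairs of arcs of $\mathcal D$ does work, because in a swirling SOD every arc intersecting $a$ shares a swirl with $a$ --- the two swirls through $a$ supply two predecessors and two successors, which are pairwise distinct by the simplicity of $G$ and hence exhaust the two arcs $a$ blocks and the two arcs $a$ feeds into (this claim is asserted but not justified in your write-up, and it is not automatic). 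The genuine gap is exactly where you flag it: the ``crux'' is never proved, and as stated it is false. If $a$ feeds into $b$ at a point $x$ inside a common swirl with eye $E$, then $a$ and $b$ \emph{do} share two distinct tiles: the eye $E$, and the unique tile on the smooth side of $a$, which carries all of $a$ as an edge and, near $x$, occupies the sector between $a$ and $b$ on the hit side of $b$, so it is simultaneously the ``middle'' tile of $b$. This happens for \emph{every} consecutive pair of swirl arcs, so it is the generic case, not a degeneracy to be quarantined with shared endpoints; $C_a\cap C_b$ is rescued only because $x$ lies on the boundary of both shared tiles, and your criterion must be restated as ``$C_a\cap C_b$ is connected,'' with the point $a\cap b$ doing real work.

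Even after that repair, the surviving combinatorial statement --- two disjoint arcs cannot have edges on two distinct common tiles, and two intersecting arcs cannot have a second common tile whose closure misses $a\cap b$ --- carries the entire content of the theorem, and your sketch (``track which side each tile lies on and aim for a contradiction with \cref{p:eyebound}'') is a hope rather than an argument. In particular, the lune argument from the proof of \cref{t:graph} does not transfer: it relies on the two eyes lying on the same side of each shared arc, whereas two general common tiles may sit on opposite sides of $a$ (one of them being the smooth-side tile). The paper sidesteps all of this by removing two swirls at a time, where the only distinctness claims needed (that the third tiles adjacent to the two eyes differ, or that $B_i\neq C_j$) each follow in one line from spherical convexity and \cref{p:eyeconv}. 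As it stands, your proposal is a plausible alternative reduction whose central lemma is both unproven and, in its present formulation, incorrect.
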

\begin{proof}
Let $\mathcal S_1$ and $\mathcal S_2$ be two swirls of a swirling SOD $\mathcal D$ of degree~3, and let $E_1$ and $E_2$ be their respective eyes. We define the ``eyelashes'' of the two swirls as in the proof of \cref{t:swirling1}, and we denote as $P_1$ the union of $E_1$ and the eyelashes of $\mathcal S_1$, while $P_2$ is the union of $E_2$ and the eyelashes of $\mathcal S_2$. Our claim is that every component of $R=(S\setminus D)\cup P_1\cup P_2$ is simply connected, where $S$ is the unit sphere and $D$ is the union of the arcs of $\mathcal D$ (again, this relies on the fact that $E_1$ and $E_2$ have no inner arcs, as the SOD is swirling).

Most components of $R$ are the interiors of tiles of $\mathcal D$, which are spherically convex, hence simply connected. \cref{fig:cases} sketches the three possible configurations for the remaining components of $R$; the boundaries of $P_1$ and $P_2$ are represented in red. In the first case, $\mathcal S_1$ and $\mathcal S_2$ do not share any arcs and no tile of $\mathcal D$ is adjacent to both $E_1$ and $E_2$. So, there is a component of $R$ containing $E_1$ and a distinct component containing $E_2$. Reasoning as in \cref{t:swirling1}, we conclude that both components are simply connected.

In the second case, $\mathcal S_1$ and $\mathcal S_2$ share an arc $a$ with endpoints $p_1\in E_1$ and $p_2\in E_2$, as in \cref{fig:cases} (center). By \cref{t:graph}, $\mathcal S_1$ and $\mathcal S_2$ cannot share more than one arc. Thus, there are two tiles $A$ and $B$ of $\mathcal D$ that are incident to the interior of $a$ (on opposite sides of it) and adjacent to both $E_1$ and $E_2$. Note that $A$ and $B$ are distinct, due to \cref{p:eyeconv}. Since both swirls have degree~3, there is exactly one more tile $C$ adjacent to $E_1$ and one more tile $D$ adjacent to $E_2$. Observe that the (unique) component of $R$ containing (the interiors of) all of these tiles is simply connected, provided that $C$ and $D$ are distinct tiles. So, assume for a contradiction that $C=D$, and thus both points $p_1$ and $p_2$ lie on the boundary of $C$. Since $p_1$ and $p_2$ are endpoints of the geodesic arc $a$, they are not antipodal, and therefore $a$ is the unique geodesic arc connecting them. On the other hand, $C$ is spherically convex, and hence the geodesic arc $p_1p_2=a$ must lie within $C$, contradicting the fact that the interior of $a$ does not border $C$ (cf.~\cref{p:eyeconv}).

The only remaining case is shown in \cref{fig:cases} (right): $\mathcal S_1$ and $\mathcal S_2$ share no arcs, but there is a tile $A$ adjacent to both $E_1$ and $E_2$. Let $B_1$ and $B_2$ be the two remaining tiles (other than $A$) adjacent to $E_1$, and let $C_1$ and $C_2$ be the two remaining tiles adjacent to $E_2$. There is a unique component of $R$ containing (the interiors of) all of these tiles, and it is simply connected, provided that all the tiles involved are distinct. In particular, we only have to rule out the case that $B_i=C_j$ for $i,j\in\{1,2\}$. Since both swirls have degree~3, there is an arc $a$ of $\mathcal S_1$ incident to both $A$ and $B_i$, and there is an arc $a'$ of $\mathcal S_2$ incident to both $A$ and $C_j$. Note that $a$ and $a'$ are distinct arcs, because $\mathcal S_1$ and $\mathcal S_2$ share no arcs. Assume for a contradiction that $B_i=C_j$. Then, the two tiles $A$ and $B_i$, which are both adjacent to $E_1$, are also incident along the two arcs $a$ and $a'$, which contradicts \cref{p:eyeconv}. We conclude that all components of $R$ are simply connected.
\end{proof}

\begin{figure}[ht]
  \centering
  \includegraphics[scale=1]{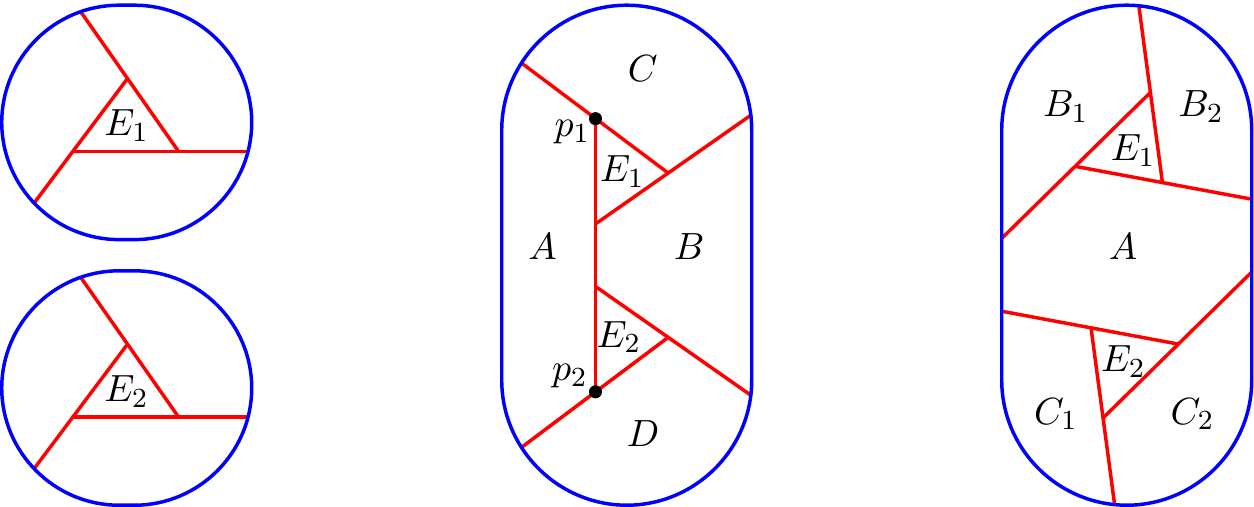}
  \caption{Sketch of the possible configurations of two swirls in a swirling SOD of degree~3.}
  \label{fig:cases}
\end{figure}

We remark that \cref{t:swirling1,t:swirling2} cannot be improved, as there are swirling SODs of degree~4 whose swirl graph is not 3-vertex-connected, such as the one in \cref{fig:counter2}. Similar counterexamples of any degree greater than~3 can be constructed, as well. 

\begin{figure}[ht]
  \centering
  \includegraphics[scale=\figscale]{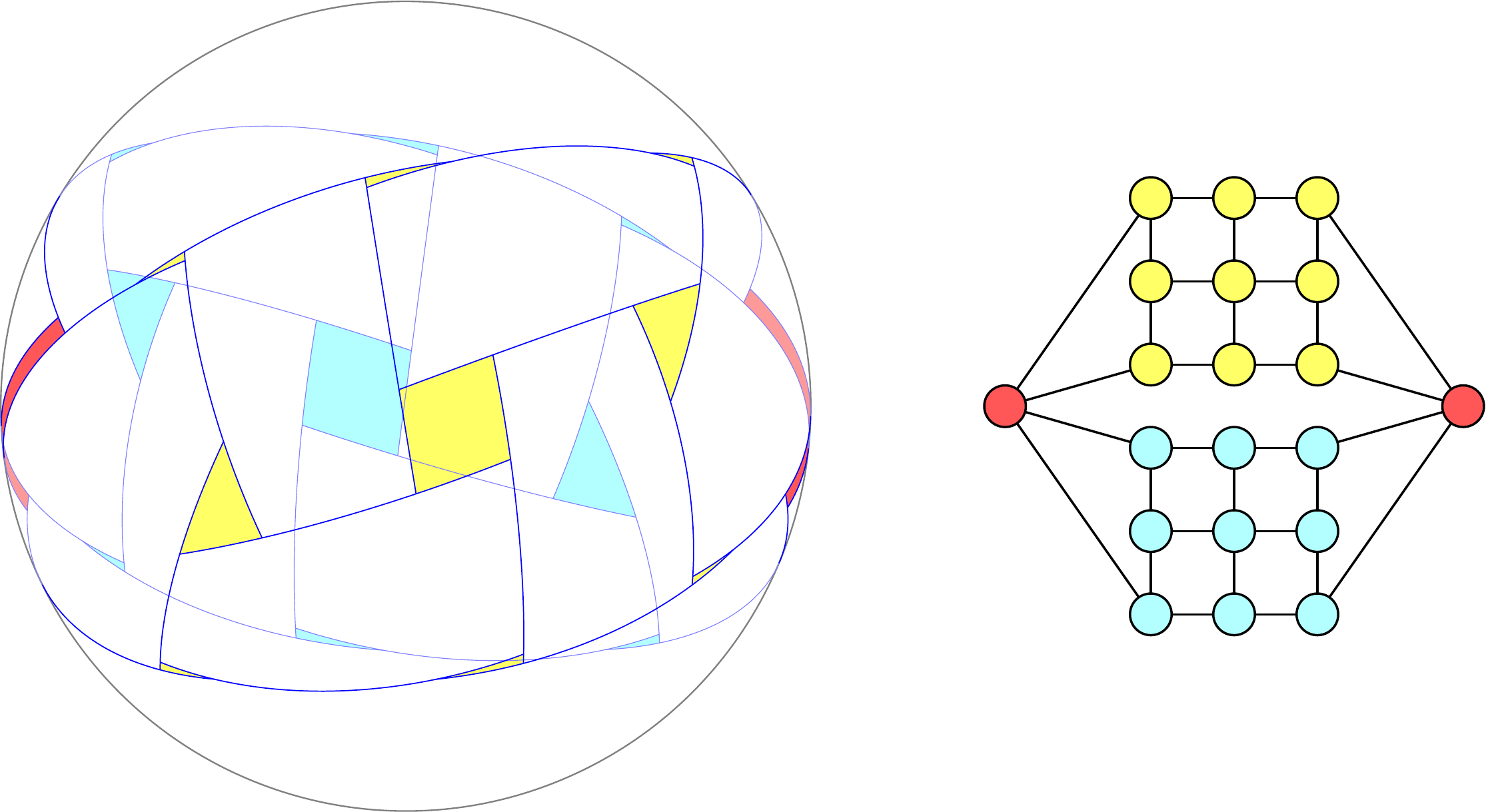}
  \caption{A swirling SOD of degree~4 (left) whose swirl graph is not 3-vertex-connected (right). Deleting the two red swirls disconnects the swirl graph into a yellow and a teal component, representing the swirls on the front side and on the back side of the sphere, respectively.}
  \label{fig:counter2}
\end{figure}

As a corollary to \cref{t:swirling2}, we can now characterize the swirl graphs of swirling SODs of degree~3.

\begin{corollary}\label{c:swirling}
The swirl graphs of swirling SODs of degree~3 are precisely the Barnette graphs.
\end{corollary}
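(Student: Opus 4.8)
The plan is to establish the two inclusions separately, assembling results already proved. Throughout, recall that a \emph{Barnette graph} is a $3$-regular, $3$-vertex-connected, planar, bipartite graph, equivalently (by Steinitz's theorem) a $3$-regular bipartite polyhedral graph.

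For the inclusion ``$\subseteq$'', let $\mathcal D$ be a swirling SOD of degree~$3$ and let $G$ be its swirl graph. First I would invoke \cref{t:graph} to get that $G$ is simple, planar, and bipartite. Next I would argue that $G$ is $3$-regular: since no two arcs of an SOD hit each other (\cref{p:05}), every swirl has degree at least~$3$, and since $\mathcal D$ has degree~$3$ every swirl has degree \emph{exactly}~$3$; a degree-$3$ swirl consists of three arcs, each of which lies in exactly one further swirl (because $\mathcal D$ is swirling), and these three swirls are distinct because $G$ is simple, so every vertex of $G$ has degree~$3$. Finally, \cref{t:swirling2} supplies $3$-vertex-connectivity, so $G$ is a Barnette graph.

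For the inclusion ``$\supseteq$'', let $G$ be a Barnette graph. I would use Steinitz's theorem to realize $G$ as the $1$-skeleton of a convex polyhedron $P$, then radially project $P$ from an interior point to obtain a convex subdivision $\mathcal S$ of the unit sphere whose $1$-skeleton is isomorphic to $G$ (as remarked before \cref{t:swirlification}). Since $G$ is bipartite, \cref{p:tiling} gives that $\mathcal S$ is swirlable, so \cref{t:swirlification} produces a swirling SOD $\mathcal D$ whose swirl graph is isomorphic to the $1$-skeleton of $\mathcal S$, hence to $G$. It remains to check that $\mathcal D$ has degree~$3$: in the swirlification each vertex $v$ of $\mathcal S$ becomes a swirl comprising exactly $\deg(v)$ arcs, and $G$ is $3$-regular, so every swirl of $\mathcal D$ has degree~$3$.

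I do not expect a genuinely hard step here: the corollary is essentially the packaging of \cref{t:graph}, \cref{t:swirling2}, \cref{p:tiling}, and \cref{t:swirlification}. The only external input is Steinitz's theorem, used to pass from an abstract Barnette graph to a concrete swirlable spherical subdivision. The one point that needs mild care --- in both directions --- is the degree arithmetic, namely that the degree of a swirl viewed as a cycle of arcs equals the degree of the corresponding vertex of the swirl graph (equivalently of $\mathcal S$); this uses the simplicity of the swirl graph from \cref{t:graph}, the swirling hypothesis, and the minimum swirl degree from \cref{p:05}.
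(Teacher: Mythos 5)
Your proposal is correct and matches the paper's proof essentially step for step: both directions use Steinitz's theorem, \cref{p:tiling}, and \cref{t:swirlification} for realizing a Barnette graph, and \cref{t:graph} plus \cref{t:swirling2} plus the minimum-swirl-degree observation for the converse. Your extra care with the degree arithmetic (using simplicity of the swirl graph to see that the three neighboring swirls are distinct) is a slightly more explicit version of the same argument.
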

\begin{proof}
Let $G$ be a Barnette graph, i.e., a 3-regular 3-connected planar bipartite graph. We will construct a swirling SOD of degree~3 whose swirl graph is $G$. By Steinitz's theorem, since $G$ is 3-connected and planar, there is a convex polyhedron $\mathcal P$ whose 1-skeleton is isomorphic to $G$. Radially projecting the 1-skeleton of $\mathcal P$ onto a unit sphere whose center is internal to $\mathcal P$ yields a convex subdivision $\mathcal S$ of the sphere whose 1-skeleton is isomorphic to $G$. Since $G$ is bipartite, \cref{p:tiling} implies that $\mathcal S$ is swirlable. Finally, due to \cref{t:swirlification}, the swirlification of $\mathcal S$ yields a swirling SOD whose swirl graph is $G$. Moreover, since $G$ is 3-regular, all swirls of this SOD must have degree~3.

Conversely, let $G$ be the swirl graph of a swirling SOD of degree~3. By \cref{t:graph}, $G$ is planar and bipartite; by \cref{t:swirling2}, $G$ is 3-connected. Also, since the SOD is swirling, each of its swirls has a number of neighbors in $G$ equal to its degree. As the SOD has degree~3, and no swirl in an SOD can have degree lower than~3, it follows that all swirls in the SOD have degree~3, and the swirl graph $G$ is 3-regular. We conclude that $G$ is a Barnette graph.
\end{proof}

An easy consequence of Steinitz's theorem and \cref{c:swirling} is that the swirl graphs of swirling SODs of degree~3 are precisely the 1-skeletons of swirlable subdivisions of the sphere whose vertices have degree~3. In other words, the converse of \cref{t:swirlification} is true, and the swirlification operation can be reversed, provided that all swirls have degree~3. However, the reversal process is not always trivial, and this explains why our proof is not direct but relies on Steinitz's theorem, which is a deep result in polyhedral combinatorics.

For instance, one might be tempted to reverse the swirlification operation by simply ``contracting'' the eye of each swirl of a swirling SOD of degree~3 into an interior point in order to obtain a swirlable subdivision of the sphere. Unfortunately, this is not always possible, and \cref{fig:counterswirl} shows a counterexample consisting of a swirling SOD of degree~3 with a 3-fold rotational symmetry around the center of a large triangular eye $E$. Suppose we replace the eye of each swirl with an interior point and move the arcs of the SOD accordingly to form a subdivision of the sphere. No matter how an interior point $p\in E$ is chosen, the subdivision always has a non-convex face (the perimeter of such a face is drawn in red in \cref{fig:counterswirl}), hence it is not a swirlable subdivision.

\begin{figure}[ht]
  \centering
  \includegraphics[scale=\figscale]{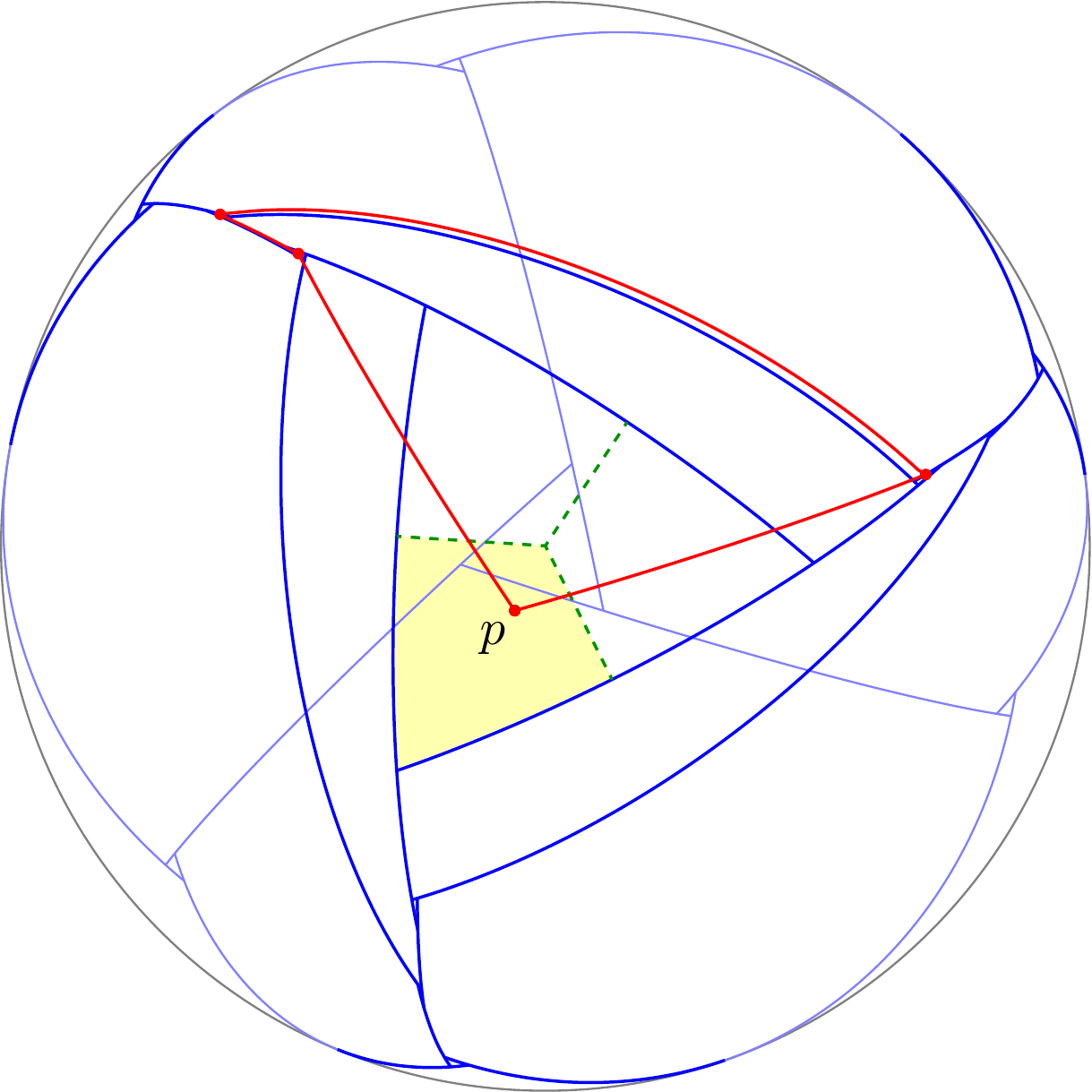}
  \caption{A swirling SOD of degree~3 where contracting the eye of each swirl into an interior point necessarily yields a non-convex subdivision of the sphere.}
  \label{fig:counterswirl}
\end{figure}

\section{Uniform SODs\label{sec:6}}
We now turn to a class of SODs that generalizes the swirling ones.

\begin{definition}
An SOD is \emph{uniform} if every arc blocks the same number of arcs.
\end{definition}

\begin{proposition}\label{p:uniform}
For an SOD $\mathcal D$, the following are equivalent.
\begin{enumerate}
\item $\mathcal D$ is uniform.
\item Every arc of $\mathcal D$ blocks exactly two arcs.
\item Every arc of $\mathcal D$ blocks at least two arcs.
\item Every arc of $\mathcal D$ blocks at most two arcs.
\end{enumerate}
\end{proposition}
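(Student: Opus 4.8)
The plan is to reduce the whole statement to one double-counting identity. For an arc $b\in\mathcal D$, let $\beta(b)$ denote the number of arcs of $\mathcal D$ blocked by $b$. I would first note that a single arc $a$ cannot be blocked by $b$ at both of its endpoints, since that would make $a$ and $b$ share two distinct points, contradicting \cref{p:05}; hence $\beta(b)$ is exactly the number of endpoints of arcs of $\mathcal D$ that lie in the relative interior of $b$ (in a degenerate SOD several arcs may share one such endpoint, but they remain distinct arcs and are counted separately, so this is fine).

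Summing $\beta(b)$ over all $b\in\mathcal D$ therefore counts the triples formed by an arc $a$, an endpoint $p$ of $a$, and the arc that blocks $a$ at $p$. By \cref{p:04}, every arc has two endpoints and is blocked at each of them by exactly one arc, so this count equals $2n$, where $n=|\mathcal D|$; that is,
\[
\sum_{b\in\mathcal D}\beta(b)=2n .
\]
Equivalently, the values $\beta(b)$ average to exactly $2$.

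The four implications are then immediate arithmetic. Condition~(2), namely $\beta\equiv 2$, trivially implies (1), (3), and (4). If $\mathcal D$ is uniform then $\beta$ is constant, say $\beta\equiv c$, and $nc=2n$ forces $c=2$, giving (1)$\Rightarrow$(2). If $\beta(b)\geq 2$ for every $b$ while $\beta(b_0)>2$ for some $b_0$, then $\sum_{b}\beta(b)>2n$, a contradiction, so (3)$\Rightarrow$(2); the same argument with the inequalities reversed gives (4)$\Rightarrow$(2). Since each of (1), (3), (4) implies (2) and (2) implies each of them, all four conditions are equivalent.

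The only place demanding any care is the justification that $\beta(b)$ genuinely counts the arcs blocked by $b$ in the presence of shared endpoints; once the identity $\sum_{b}\beta(b)=2n$ is established, the remainder is a one-line pigeonhole argument and presents no real obstacle.
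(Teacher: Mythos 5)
Your proof is correct and follows essentially the same route as the paper: both rest on the double-counting identity that the number of blocked arcs sums to $2n$ (equivalently, averages to $2$), from which all four equivalences are immediate arithmetic. Your extra care in invoking \cref{p:05} to rule out an arc being blocked twice by the same arc is a sound refinement of a point the paper leaves implicit.
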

\begin{proof}
By \cref{p:04}, each arc hits exactly two distinct arcs. Hence, each arc blocks two arcs on average. Thus, if all arcs blocks the same number of arcs, this number must be two. For the same reason, if every arc blocks at most two arcs (or at least two arcs), it must block exactly two arcs.
\end{proof}

\begin{proposition}\label{p:uniirred}
Every uniform SOD is irreducible.
\end{proposition}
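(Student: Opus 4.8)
The plan is to argue by contradiction. Suppose $\mathcal D$ is uniform and that some proper subset $\mathcal D'\subsetneq\mathcal D$ is itself an SOD; write $n'=|\mathcal D'|$, and recall that $\mathcal D'$ is non-empty because every SOD is. I will show that $\mathcal D'$ contains, together with any one of its arcs, every arc of $\mathcal D$ that touches it; in the language of \cref{sec:3}, no edge of the contact graph of $\mathcal D$ joins $\mathcal D'$ to its complement. Since the contact graph of $\mathcal D$ is connected (indeed $2$-connected) by \cref{p:07a}, this forces $\mathcal D'=\mathcal D$, a contradiction.

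The first half of the incidence bookkeeping is that $\mathcal D'$ is closed under ``the arcs it hits''. Fix $a\in\mathcal D'$ and an endpoint $p$ of $a$. Axiom~A2 applied to the SOD $\mathcal D$ provides an arc of $\mathcal D$ whose relative interior contains $p$, and axiom~A1 makes this arc unique; call it $b$. Axiom~A2 applied to the SOD $\mathcal D'$ provides an arc $b'\in\mathcal D'\subseteq\mathcal D$ whose relative interior contains $p$, so $b'=b$ and hence $b\in\mathcal D'$. As $a$ has two endpoints and hits exactly two arcs (\cref{p:04}), both of these arcs lie in $\mathcal D'$. Equivalently, among the ordered pairs $(a,b)$ with ``$a$ hits $b$'', all those with $a\in\mathcal D'$ already satisfy $b\in\mathcal D'$; there are $2n'$ such pairs.

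The crucial step is to upgrade this to closure in the other direction, and here uniformity enters. Because $\mathcal D$ is uniform, \cref{p:uniform} gives that every arc of $\mathcal D$ is hit by exactly two arcs of $\mathcal D$; hence there are also exactly $2n'$ ordered pairs $(a,b)$ with ``$a$ hits $b$'' and $b\in\mathcal D'$. The previous paragraph exhibits $2n'$ of these pairs, namely those with $a\in\mathcal D'$, so these are all of them: whenever $b\in\mathcal D'$ and an arc $a\in\mathcal D$ hits $b$, necessarily $a\in\mathcal D'$. Together with the first closure property this shows that no arc of $\mathcal D\setminus\mathcal D'$ hits or is hit by an arc of $\mathcal D'$, completing the argument as in the first paragraph. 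I expect the only delicate point to be precisely this upgrade: one has to notice that the global uniformity of $\mathcal D$ turns a one-sided closure property of $\mathcal D'$ (forced by its own axiom~A2) into a two-sided one via a simple double count. The possibility that several arcs share an endpoint — permitted by \cref{d:1} — causes no trouble, since the blocking arc at any given point is still unique by axiom~A1.
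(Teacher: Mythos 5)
Your proof is correct and is essentially the paper's argument in a more explicit form: the paper also combines \cref{p:04}, the uniformity bound of two blockers per arc, and connectivity (\cref{p:07b}), phrased as ``the arcs of $\mathcal D'$ would block fewer than two arcs on average,'' which is exactly your double count read as an averaging contradiction. Your reorganization as a two-sided closure property of $\mathcal D'$ in the contact graph is a faithful (and slightly more careful) rendering of the same idea, so no genuinely different route is taken.
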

\begin{proof}
Let $\mathcal D$ be a uniform SOD, and assume that there is a proper subset of arcs $\mathcal D'\subset \mathcal D$ that is itself an SOD. By \cref{p:07b}, $\mathcal D$ is connected; thus, removing arcs from $\mathcal D$ causes some arcs to block fewer than two arcs. Since $\mathcal D$ is uniform, it follows that the arcs of $\mathcal D'$ block fewer than two arcs on average, contradicting \cref{p:04}.
\end{proof}

\begin{corollary}\label{c:eyeuni}
In a uniform SOD, the eye of each swirl coincides with a single tile.
\end{corollary}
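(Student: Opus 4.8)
The plan is to combine Proposition~\ref{p:uniirred} with the fact that in an \emph{irreducible} SOD the eye of every swirl is a single tile. Since every uniform SOD is irreducible by Proposition~\ref{p:uniirred}, it suffices to prove the latter. So I would fix an irreducible SOD $\mathcal D$ and one of its swirls $\mathcal S=(a_0,\dots,a_{k-1})$ with eye $E$, assume for contradiction that $E$ is a union of more than one tile, and produce a proper sub-collection of $\mathcal D$ that is again an SOD, contradicting irreducibility.

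First I would classify the arcs of $\mathcal D$ relative to $E$. Since $\partial E\subseteq a_0\cup\dots\cup a_{k-1}$, axiom~A1 prevents the relative interior of any arc $c\notin\{a_0,\dots,a_{k-1}\}$ from meeting $\partial E$; hence the relative interior of such a $c$ lies either entirely in the open eye $\mathring E$ (call $c$ \emph{inner}) or entirely in the exterior of $E$. The swirl arcs themselves meet $\overline E$ only along the corresponding edges of $E$ (which lie on $\partial E$) --- this uses the convexity of $E$ --- so no $a_i$ is inner and $\mathring E$ meets no $a_i$. Consequently, if $E$ is not a single tile then $\mathring E$ must meet some arc (otherwise $\mathring E$ would be a connected component of the complement of $\bigcup\mathcal D$, making $E$ a single tile), and that arc is inner; thus the set $\mathcal D_{\mathrm{in}}$ of inner arcs is non-empty, and $\mathcal D\setminus\mathcal D_{\mathrm{in}}$ is a proper subset of $\mathcal D$ that still contains all of $\mathcal S$.

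Next I would check that $\mathcal D\setminus\mathcal D_{\mathrm{in}}$ is an SOD. Axioms~A1 and~A3 are inherited by subcollections, so only axiom~A2 requires work: given $c\in\mathcal D\setminus\mathcal D_{\mathrm{in}}$ and an endpoint $p$ of $c$, let $d\in\mathcal D$ block $c$ at $p$. If $d$ were inner, then $p\in\mathring E$; but $\mathring E$ is open, so $c$ would have interior points in $\mathring E$ arbitrarily close to $p$ (for the swirl arcs this alternative is excluded directly, since both their endpoints lie off $\mathring E$), forcing $c$ to be inner, a contradiction. Hence $d\in\mathcal D\setminus\mathcal D_{\mathrm{in}}$, and $c$ remains blocked at $p$. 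So $\mathcal D\setminus\mathcal D_{\mathrm{in}}$ is a proper sub-SOD, contradicting irreducibility; therefore $E$ is a single tile, and the corollary follows.

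The step I expect to demand the most care is the dichotomy in the second paragraph: one must be sure that axiom~A1 genuinely rules out an arc ``straddling'' $\partial E$ (including touching it tangentially at a vertex of $E$), and one must pin down the endpoints and ``tails'' of the swirl arcs --- they sit outside $\overline E$, by convexity of the eye --- so that $\mathcal D\setminus\mathcal D_{\mathrm{in}}$ is both non-empty and closed under the blocking relation. Once that is settled, the axiom~A2 verification is immediate and the rest is bookkeeping.
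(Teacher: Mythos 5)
Your proposal is correct and follows the same route as the paper: the paper's proof of this corollary is exactly ``if the eye's interior contained arcs, they could be removed without violating the axioms, contradicting irreducibility (hence uniformity, by Proposition~\ref{p:uniirred})'', and your argument is a careful elaboration of that one line, with the inner/outer dichotomy via axiom~A1 and the verification of axiom~A2 for $\mathcal D\setminus\mathcal D_{\mathrm{in}}$ filled in explicitly.
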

\begin{proof}
If the interior of the eye of a swirl contains some arcs, then such arcs can be removed without violating the SOD axioms. Hence, such an SOD is not irreducible, and by \cref{p:uniirred} it cannot be uniform.
\end{proof}

\begin{theorem}\label{t:uniform}
Every swirling SOD is uniform.
\end{theorem}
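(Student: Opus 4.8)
The plan is to prove that every arc of a swirling SOD $\mathcal D$ blocks at least two arcs; by \cref{p:uniform} this is equivalent to uniformity. Fix an arc $a\in\mathcal D$. Since $\mathcal D$ is swirling, $a$ lies on two distinct swirls $\mathcal S_1$ and $\mathcal S_2$, with respective eyes $E_1$ and $E_2$ (\cref{p:eyeconv}). In each swirl $a$ has a predecessor, i.e.\ an arc feeding into $a$, which is therefore blocked by $a$; call these $c_1$ (the predecessor of $a$ in $\mathcal S_1$) and $c_2$ (the predecessor of $a$ in $\mathcal S_2$). It suffices to show $c_1\neq c_2$, so assume for contradiction that $c_1=c_2=:c$. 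Note that $c$ then belongs to both $\mathcal S_1$ and $\mathcal S_2$.

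The first ingredient is an elementary observation: for any swirl $\mathcal S$ with eye $E$ and any arc $b$ of $\mathcal S$, the interior of $E$ lies on the same side of $b$ as every arc that hits $b$. Indeed, the predecessor of $b$ in $\mathcal S$ feeds into $b$ at a vertex of $E$; since $E$ is spherically convex and has an edge contained in $b$, this predecessor approaches $b$ from the side on which the interior of $E$ lies, and by axiom~A3 every arc hitting $b$ approaches from that same side. Applying this to the arc $a$ in both $\mathcal S_1$ and $\mathcal S_2$, together with the fact that (by A3) there is a single side of $a$ from which arcs hit it, we conclude that $E_1$ and $E_2$ lie on the same side of $a$. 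The boundaries of $E_1$ and $E_2$ both contain the (nondegenerate) edge of the respective eye that lies on $a$, so these boundaries meet; hence by \cref{p:eyebound} the interiors of $E_1$ and $E_2$ are disjoint.

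Now examine the point $x$ at which $c$ feeds into $a$: it is an endpoint of $c$, an interior point of $a$, and a common vertex of $E_1$ and $E_2$. Locally at $x$ there are three relevant rays: the two opposite rays along $a$ toward its endpoints, say $u$ and $v$, and the ray along $c$ into the interior of $c$, which lies on the hitting side of $a$. The edge of $E_i$ lying on $a$ is a sub-arc of $a$ emanating from $x$ toward $u$ or toward $v$. If both eyes had this edge running toward the same endpoint, then near $x$ they would both fill the convex sector between the ray along $c$ and the ray toward that endpoint, so their interiors would overlap --- contradicting the previous paragraph. Hence, relabelling if necessary, $E_1$ fills the convex sector at $x$ between the ray along $c$ and the ray toward $u$, while $E_2$ fills the convex sector between the ray along $c$ and the ray toward $v$. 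Since the rays toward $u$ and $v$ are opposite, these two sectors lie on opposite sides of the line spanned by $c$ at $x$; therefore, along the sub-arcs of $c$ incident to $x$, the eyes $E_1$ and $E_2$ border the arc $c$ from opposite sides. But the observation above, applied to the arc $c$ (which belongs to both $\mathcal S_1$ and $\mathcal S_2$), forces both $E_1$ and $E_2$ to lie on the hitting side of $c$, and by A3 there is only one such side. This contradiction shows $c_1\neq c_2$, hence $a$ blocks at least two arcs and $\mathcal D$ is uniform.

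The step I expect to be the main obstacle is the local analysis around the vertex $x$: one must argue carefully that ``same side of $a$'' together with ``two distinct eyes whose boundaries meet along $a$'' forces the two eyes to leave $x$ toward opposite endpoints of $a$, and then convert this planar picture into the statement that $E_1$ and $E_2$ border $c$ from opposite sides. Some care is also needed to check that the incidences used --- that $x$ is genuinely a vertex of each eye, and that each eye borders $c$ along a nondegenerate sub-arc ending at $x$ --- follow from the definition of a swirl and \cref{p:eyeconv}. The remaining steps are routine once the side observation is established.
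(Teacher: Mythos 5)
Your proof is correct and follows the same skeleton as the paper's: each arc $a$ of a swirling SOD lies on two distinct swirls, each swirl supplies an arc that $a$ blocks (the predecessor of $a$ in that swirl), and \cref{p:uniform} converts ``every arc blocks at least two arcs'' into uniformity. The only point of divergence is how you establish that the two predecessors are distinct. The paper disposes of this in one line by invoking \cref{t:graph}: since the swirl graph is simple, the two swirls through $a$ share no arc other than $a$, so in particular their predecessors of $a$ differ. You instead re-derive the needed special case from scratch, ruling out a common predecessor $c$ by a local sector analysis at the point where $c$ would feed into $a$, combined with \cref{p:eyebound} and the (correct) observation that an eye lies on the hitting side of each of its swirl's arcs. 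Your argument is sound --- the configuration you exclude, namely two swirls sharing the two incident arcs $c$ and $a$, is precisely the ``degenerate'' case handled at the end of the proof of \cref{t:graph}, and your local argument is a legitimate alternative to the lune argument used there. What the extra work buys you is a self-contained proof that does not lean on the simplicity of the swirl graph; what it costs is length, since citing \cref{t:graph} reduces the whole proof to a few lines.
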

\begin{proof}
In a swirling SOD, each arc $a$ is part of two distinct swirls. By \cref{t:graph}, these two swirls share no arcs other than $a$, and hence $a$ must block one arc from each of them. Therefore, every arc in a swirling SOD blocks at least two arcs, and by \cref{p:uniform} the SOD is uniform.
\end{proof}

The converse of \cref{t:uniform} is not true in general, as \cref{fig:u05} shows.

\begin{figure}[ht]
  \centering
  \includegraphics[scale=\figscale]{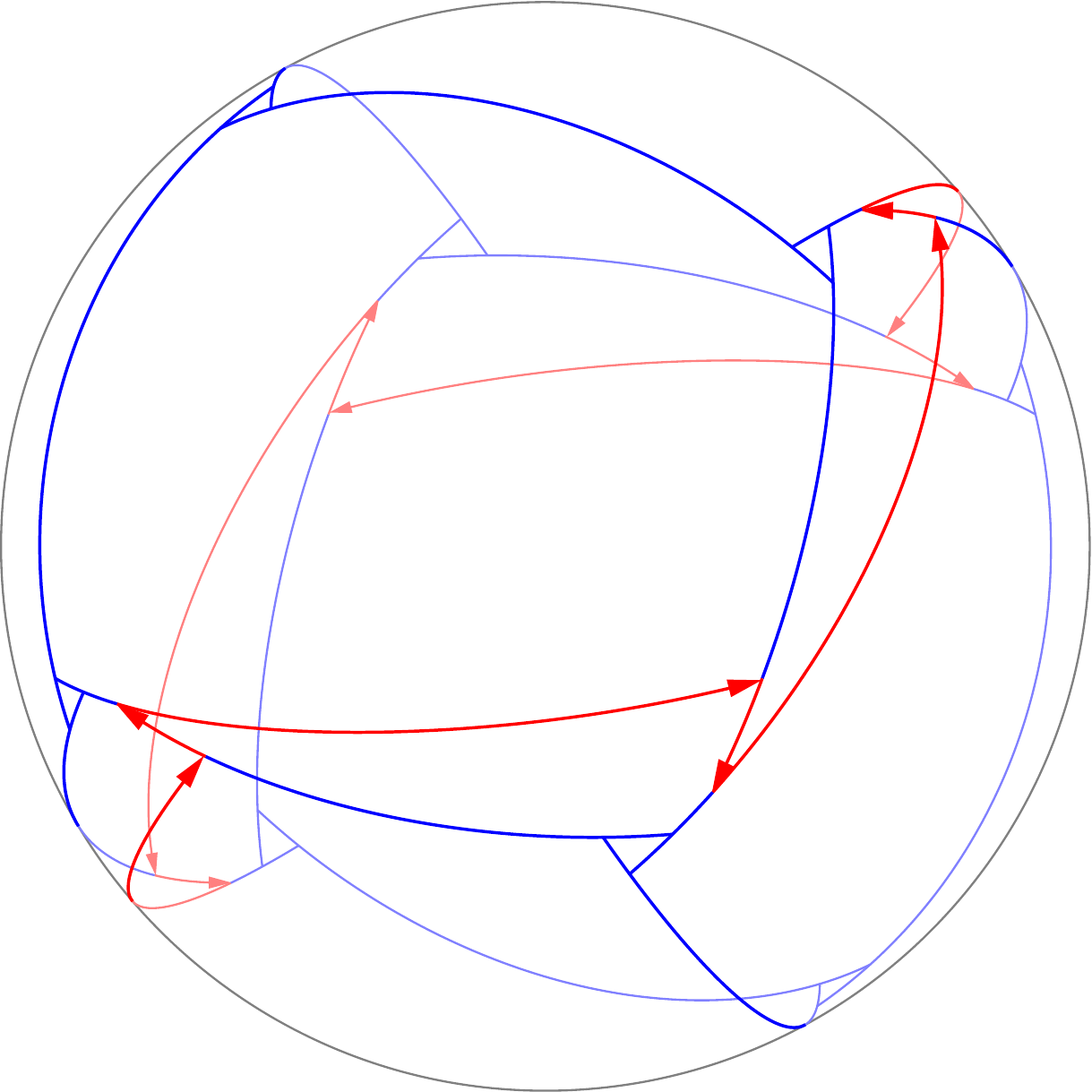}
  \caption{A uniform SOD that is not swirling.}
  \label{fig:u05}
\end{figure}

\begin{definition}
An endpoint of an arc of an SOD is called a \emph{swirling vertex} if it is incident to the eye of a swirl. A walk on an SOD is \emph{non-swirling} if it does not touch swirling vertices and, whenever it touches the interior of an arc, it follows it until it reaches one of its endpoints, without touching any other arc along the way. A cyclic non-swirling walk is called a \emph{non-swirling cycle}.
\end{definition}

Observe that there is a non-swirling cycle that covers all the non-swirling vertices of the SOD in \cref{fig:u05} (drawn in red). As the next theorem shows, this is not a coincidence.

\begin{theorem}\label{t:cycles}
In a uniform SOD where no two arcs share an endpoint, all non-swirling vertices are covered by disjoint non-swirling cycles.
\end{theorem}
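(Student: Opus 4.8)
The plan is to turn the non-swirling walk into a permutation of the set of arc-endpoints and identify the non-swirling cycles with its orbits. Write $V$ for the set of points of the unit sphere that are endpoints of arcs of $\mathcal D$. Since no two arcs share an endpoint, each $p\in V$ is an endpoint of a unique arc, and by axioms~A1 and~A2 there is a unique arc $b_p\in\mathcal D$ whose relative interior contains $p$. I would first record the local structure of $b_p$: being uniform, $\mathcal D$ makes $b_p$ block exactly two arcs (\cref{p:uniform}) and --- since no two arcs share an endpoint --- at two distinct interior points of $b_p$; so along $b_p$ the two endpoints and the two interior contact points appear in the linear order (endpoint, contact point, contact point, endpoint). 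Hence $p$ is one of the two interior contact points of $b_p$, and there is a unique endpoint $\sigma(p)$ of $b_p$ joined to $p$ by a sub-arc of $b_p$ whose relative interior avoids the other contact point. Defining $\sigma\colon V\to V$ by $p\mapsto\sigma(p)$, one step of $\sigma$ is exactly one ``leg'' of a non-swirling walk: from $p$ the walk is forced into $b_p$, and then forced along $b_p$ to $\sigma(p)$, since the other endpoint of $b_p$ can only be reached by first meeting the other arc that hits $b_p$.

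Next I would check that $\sigma$ is a bijection, by reading the same local picture backwards: if $\sigma(p)=q$ then $q$ is an endpoint of $b_p$, so $b_p$ is forced to be the unique arc having $q$ as an endpoint, and then $p$ is forced to be the contact point of that arc adjacent to $q$; hence $p$ is recovered from $q$. As $V$ is finite, $\sigma$ is a permutation, and its orbits partition $V$ into cycles which, traced through the connecting sub-arcs, are precisely cyclic non-swirling walks. A short case analysis (using~A1 and the fact that such a connecting sub-arc contains no contact point in its relative interior) shows that sub-arcs arising from two different orbits cannot meet even in a point, so the cycles coming from different orbits are pairwise disjoint.

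It then remains only to locate the swirling vertices among the orbits. Let $\mathcal S=(a_0,\dots,a_{k-1})$ be a swirl with eye $E$, and let $x_i$ be the point where $a_{i-1}$ feeds into $a_i$ (indices mod $k$); by \cref{p:uniirred} and \cref{c:eyeuni}, $E$ is a single tile. The edge of $E$ from $x_i$ to $x_{i+1}$ lies on $a_i$, and I would argue that $a_{i-1}$ reaches $x_i$ from the side of $a_i$ on which $E$ lies: the convex polygon $E$ is contained in one of the two hemispheres cut out by $a_i$, and $a_{i-1}$ approaches $x_i$ along $\partial E$. By axiom~A3, every arc hitting $a_i$ then does so from that side; but on that side the sub-arc $[x_i,x_{i+1}]$ borders only the tile $E$, so no arc hits $a_i$ in the relative interior of $[x_i,x_{i+1}]$. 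Therefore $[x_i,x_{i+1}]$ is exactly the sub-arc joining the contact point $x_i$ to its adjacent endpoint, i.e.\ $\sigma(x_i)=x_{i+1}$. Consequently the eye-vertices of $\mathcal S$ form one full $\sigma$-orbit; since this holds for every swirl and $\sigma$ is a bijection, the set of swirling vertices of $\mathcal D$ is a union of $\sigma$-orbits, and the complementary orbits give a family of pairwise disjoint non-swirling cycles covering all the non-swirling vertices, as required.

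The step I expect to be the real work is the first one --- establishing, in a uniform SOD with no shared endpoints, the exact (endpoint, contact, contact, endpoint) layout of each arc, so that ``follow the arc to an endpoint without touching another arc'' has a single well-defined outcome --- together with the verification that distinct orbit-cycles are genuinely disjoint as subsets of the sphere. The identification $\sigma(x_i)=x_{i+1}$ is where axiom~A3 and the irreducibility of uniform SODs (\cref{c:eyeuni}) enter in an essential way; everything after that is a purely formal consequence of $\sigma$ being a permutation.
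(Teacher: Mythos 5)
Your proof is correct and is essentially the paper's argument repackaged: your successor map $\sigma$ (send each contact point to the endpoint of the blocking arc reachable without crossing the other contact point, which exists and is unique by uniformity and the no-shared-endpoints hypothesis) is exactly the paper's unique one-step extension of a non-swirling walk, and bijectivity of $\sigma$ plays the role of the paper's unique backward extension. The only cosmetic difference is at the eyes: the paper shows directly that the successor of a non-swirling vertex is non-swirling (via \cref{p:uniform} and \cref{c:eyeuni}), whereas you show that the vertices of each eye form a full $\sigma$-orbit, namely $\sigma(x_i)=x_{i+1}$, and deduce the same conclusion from bijectivity --- the same two ingredients (axiom~A3 and \cref{c:eyeuni}) doing the same work.
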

\begin{proof}
Suppose that no two arcs of a uniform SOD share an endpoint. Consider a non-swirling walk $W$ terminating at a non-swirling vertex $x_i$, endpoint of an arc $a_i$, as \cref{fig:u09new} shows. We will prove that $W$ can be extended to a longer non-swirling walk in a unique way.

\begin{figure}[ht]
  \centering
  \includegraphics[scale=\figscale]{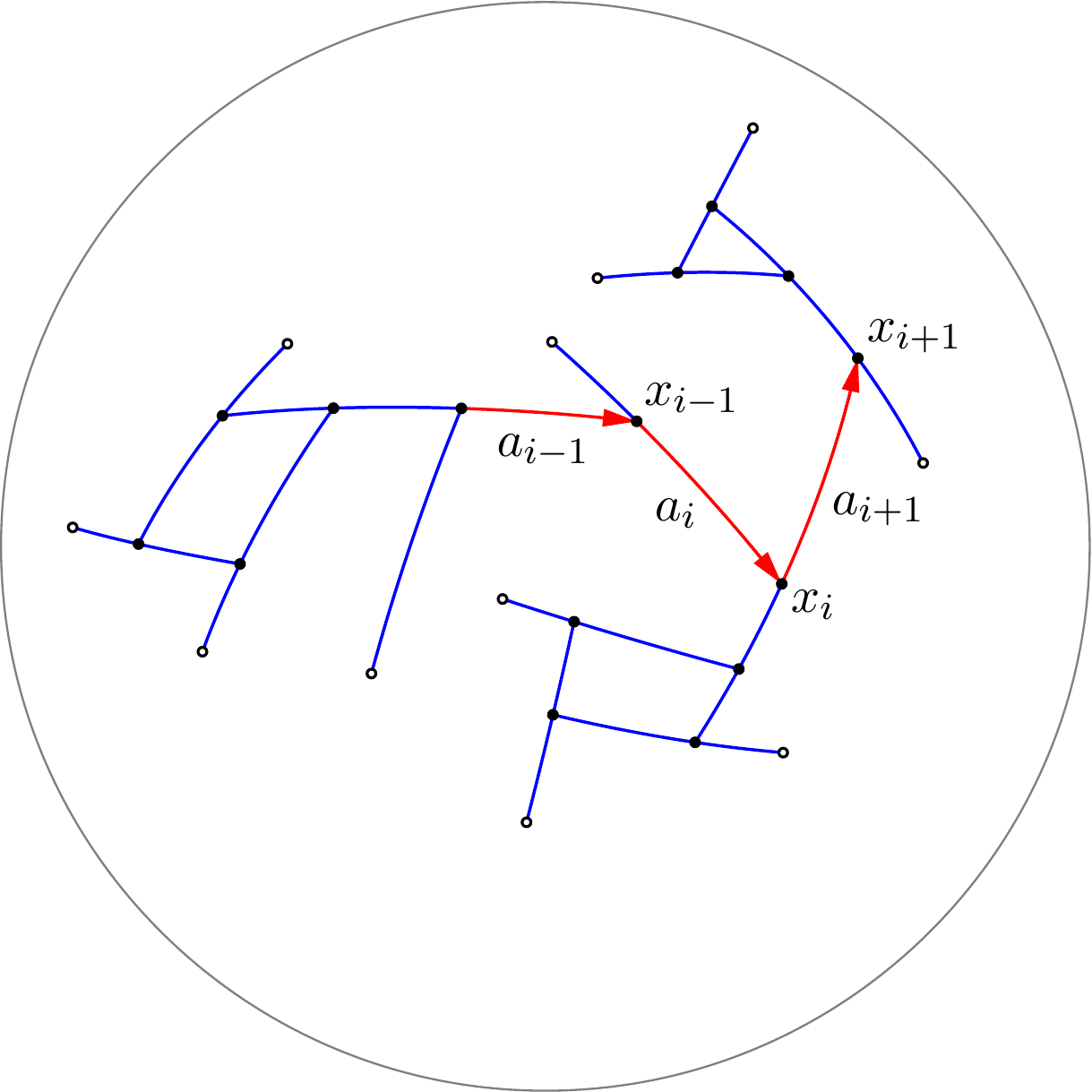}
  \caption{Proof of \cref{t:cycles}: A non-swirling walk can be extended indefinitely.}
  \label{fig:u09new}
\end{figure}

Let $a_{i+1}$ be the arc that blocks $a_i$ at $x_i$. Since exactly one arc other than $a_i$ hits $a_{i+1}$, and it does so at an endpoint other than $x_i$, there is exactly one endpoint of $a_{i+1}$, say $x_{i+1}$, that can be reached from $x_i$ without touching any arc other than $a_{i+1}$.

By definition of non-swirling walk, $x_{i+1}$ can be used to extend $W$ if and only if it is a non-swirling vertex. However, if $x_{i+1}$ were incident to a swirl's eye $E$, then an arc of that swirl would either hit $a_{i+1}$ between $x_{i}$ and $x_{i+1}$, contradicting the fact that $a_{i+1}$ blocks exactly two arcs, or it would hit $a_{i+1}$ on the other side of $x_{i}$, implying that $E$ contains the arc $a_i$ in its interior, which contradicts \cref{c:eyeuni}.

Hence, $W$ can be extended uniquely to a non-swirling walk. By a similar reasoning, we argue that $W$ can also be uniquely extended backwards to a non-swirling walk. Thus, $W$ is part of a unique non-swirling cycle. Now we conclude the proof by inductively repeating the same argument with any remaining non-swirling vertices.
\end{proof}

It is straightforward to extend \cref{t:cycles} to ``degenerate'' uniform SODs where arcs may share endpoints. However, in this case the portions of arcs that do not contribute to swirls are only partitioned into edge-disjoint cycles (as opposed to vertex-disjoint cycles).

We remark that we can construct uniform SODs with any number of unboundedly long non-swirling cycles. An example with two non-swirling cycles is shown in \cref{fig:u12}.

\begin{figure}[ht]
  \centering
  \includegraphics[scale=\figscale]{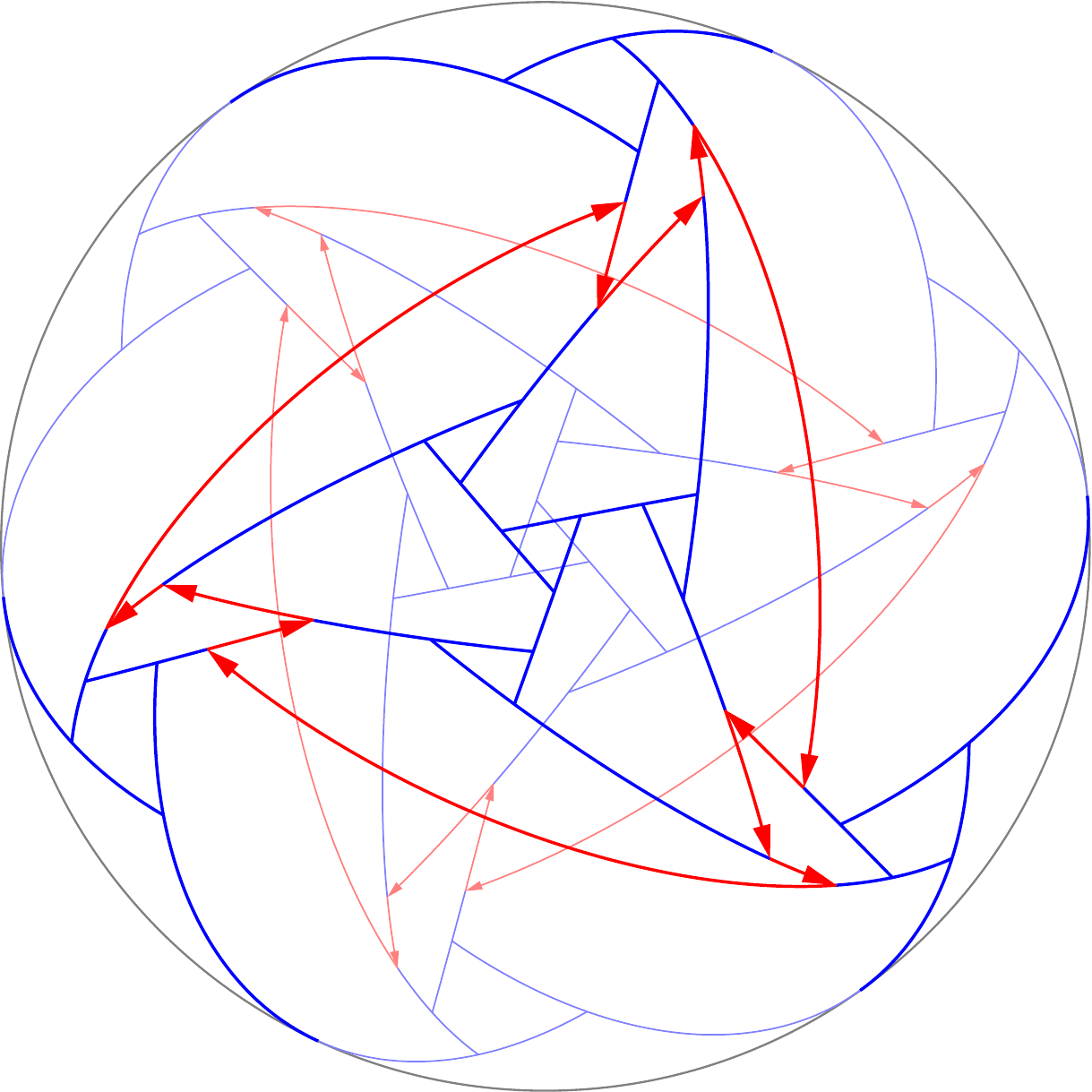}
  \caption{A uniform SOD with two non-swirling cycles.}
  \label{fig:u12}
\end{figure}

\section{Conclusions}\label{sec:7}
We introduced the theory of Spherical Occlusion Diagrams and studied their basic properties, while also discussing some applications to visibility-related problems in discrete and computational geometry.

Although we strongly believe \cref{c:1} to be true, i.e., not all irreducible SODs can be realized as polyhedra, a related and more subtle question can be asked, inspired by previous work on weaving patterns~\cite{basu,pach}. Namely, whether for every SOD $\mathcal D$ there is a \emph{combinatorially equivalent} SOD $\mathcal D'$ and a polyhedron $\mathcal P$ such that $\mathcal D'=S_\mathcal P$. In other words, does every class of combinatorially equivalent SODs contain a realizable instance?

We have introduced three noteworthy families of SODs: irreducible, uniform, and swirling. We proved that all swirling SODs are uniform, and all uniform SODs are irreducible. In the case of swirling SODs of degree~3 (i.e., where all swirls consist of exactly three arcs), we were able to characterize their swirl graphs as the Barnette graphs, that is, the 3-regular 3-connected planar bipartite graphs (\cref{c:swirling}). It remains an open problem to characterize the swirl graphs of other families of SODs. \cref{t:graph,t:swirling1,t:cycles} are steps in this direction.

It is somewhat surprising that there is a wealth of swirling SODs whose swirl graphs are not bipartite polyhedral graphs (cf.~\cref{fig:counter2}). Nonetheless, it is natural to wonder if the graphs in these two classes share any notable properties, such as the one established by \cref{t:swirling1} (i.e., being 2-vertex-connected).

Another common feature of bipartite polyhedral graphs and the swirl graphs of swirling SODs is that all such graphs have vertices of degree~3.\footnote{It is an elementary consequence of Euler's formula that any planar graph contains a triangle or a vertex of degree at most~3. Thus, a bipartite planar graph must have a vertex of degree at most~3, as it cannot contain triangles.} In turn, this immediately implies that any swirling SOD has a swirl of degree~3. This contrasts with the fact that there are (non-swirling) SODs containing only swirls of arbitrarily large degree.

\cref{t:cycles} reveals a structural connection between swirling and uniform SODs, which might be taken a step further. It seems to be possible to systematically transform any uniform SOD into a swirling SOD by ``sliding'' the endpoints of some arcs along other arcs and ``merging'' coincident arcs. Making this observation rigorous is left as a direction for future work.

More generally, we may wonder which SODs can be transformed into swirling ones by sequences of elementary operations on arcs (defining suitable ``elementary operations'' is in itself an open problem). The SOD in \cref{fig:8arc} shows that the question is not trivial. Indeed, this is the unique configuration of any SOD with eight or fewer arcs; since the SOD itself is not swirling, it cannot be transformed into a swirling one by means of operations that only rearrange or merge arcs.

T\'oth pointed out an interesting similarity between SODs and \emph{one-sided rectangulations}, which are subdivisions of an axis-aligned rectangle into axis-aligned rectangles such that every maximal inner edge coincides with a side of one of the rectangles.\footnote{This property is akin to our \cref{p:tileedge}, and the term ``one-sided'' is reminiscent of our axiom~A3.} It is known that any one-sided rectangulation is \emph{area-universal}, i.e., any assignment of areas to its rectangles can be realized by a combinatorially equivalent rectangular layout~\cite{eppstein,merino}. We wonder if a similar property holds for SODs, and to what extent the theory of one-sided rectangulations can provide insights for the study of SODs.

\bibliography{spherical}

\begin{thebibliography}{10}

\bibitem{basu}
Saugata Basu, Raghavan Dhandapani, and Richard Pollack.
\newblock On the realizable weaving patterns of polynomial curves in {$\mathbb
  R^3$}.
\newblock In {\em Proceedings of the 12th International Symposium on Graph
  Drawing (GD 2004)}, pages 36--42, 2004.

\bibitem{benbernou}
Nadia~M. Benbernou, Erik~D. Demaine, Martin~L. Demaine, Anastasia Kurdia,
  Joseph O'Rourke, Godfried~T. Toussaint, Jorge Urrutia, and Giovanni
  Viglietta.
\newblock Edge-guarding orthogonal polyhedra.
\newblock In {\em Proceedings of the 23rd Canadian Conference on Computational
  Geometry (CCCG 2011)}, pages 461--466, 2011.

\bibitem{cano}
Javier Cano, Csaba~D. T\'oth, Jorge Urrutia, and Giovanni Viglietta.
\newblock Edge guards for polyhedra in three-space.
\newblock {\em Computational Geometry: Theory and Applications}, 104:101859,
  2022.

\bibitem{eppstein}
David Eppstein, Elena Mumford, Bettina Speckmann, and Kevin Verbeek.
\newblock Area-universal and constrained rectangular layouts.
\newblock {\em SIAM Journal on Computing}, 41(3):537--564, 2012.

\bibitem{convex}
Branko Gr\"unbaum.
\newblock {\em Convex polytopes}.
\newblock Springer-Verlag New York, Inc., second edition, 2003.

\bibitem{barnette}
Jochen Harant.
\newblock A note on {B}arnette's conjecture.
\newblock {\em Discussiones Mathematicae Graph Theory}, 33(1):133--137, 2013.

\bibitem{kimberly}
Kimberly Kokado and Csaba~D. T\'oth.
\newblock Nonrealizable planar and spherical occlusion diagrams.
\newblock In {\em Proceedings of the 24th Japan Conference on Discrete and
  Computational Geometry, Graphs, and Games (JCDCGGG 2022)}, pages 60--61,
  2022.

\bibitem{merino}
Arturo Merino and Torsten M\"utze.
\newblock Efficient generation of rectangulations via permutation languages.
\newblock In {\em 37th International Symposium on Computational Geometry (SoCG
  2021)}, volume 189, pages 54:1--54:18, 2021.

\bibitem{art}
Joseph O'Rourke.
\newblock {\em Art gallery theorems and algorithms}.
\newblock Oxford University Press, 1987.

\bibitem{orourke}
Joseph O'Rourke.
\newblock Visibility.
\newblock In Jacob~E. Goodman, Joseph O'Rourke, and Csaba~D. T\'oth, editors,
  {\em Handbook of discrete and computational geometry}, chapter~33, pages
  875--896. CRC Press, 2017.

\bibitem{pach}
J\`anos Pach, Richard Pollack, and Emo Welzl.
\newblock Weaving patterns of lines and line segments in space.
\newblock {\em Algorithmica}, 9(6):561--571, 1993.

\bibitem{tjc}
Csaba~D. T\'oth, Jorge Urrutia, and Giovanni Viglietta.
\newblock Minimizing visible edges in polyhedra.
\newblock In {\em Proceedings of the 23rd Thailand-Japan Conference on Discrete
  and Computational Geometry, Graphs, and Games (TJCDCGGG 2020+1)}, pages
  70--71, 2021.

\bibitem{arxiv}
Csaba~D. T\'oth, Jorge Urrutia, and Giovanni Viglietta.
\newblock Minimizing visible edges in polyhedra.
\newblock {\em arXiv:2208.09702 [cs.CG]}, pages 1--19, 2022.

\bibitem{reflex}
Giovanni Viglietta.
\newblock Optimally guarding 2-reflex orthogonal polyhedra by reflex edge
  guards.
\newblock {\em Computational Geometry: Theory and Applications}, 86:101589,
  2020.

\bibitem{cccg}
Giovanni Viglietta.
\newblock A theory of spherical diagrams.
\newblock In {\em Proceedings of the 34th CanadianConference on Computational
  Geometry (CCCG 2022)}, pages 306--313, 2022.

\end{thebibliography}

\end{document}